\newtheorem{proposition}{Proposition}[section]
\newtheorem{theorem}[proposition]{Theorem}
\newtheorem{lemma}[proposition]{Lemma}
\newtheorem{corollary}[proposition]{Corollary}
\theoremstyle{definition}
\theoremstyle{definition}
\theoremstyle{definition}
\theoremstyle{definition}
\newtheorem{definition}{Definition}[section]
\newcommand{\mw}{\mathbb{W}}
\newcommand{\md}{\mathop{\mathrm{mod}}}
\newcommand{\Shift}{\mathop{\mathrm{Shift}}}
\newcommand{\floor}[1]{\ensuremath{\lfloor #1 \rfloor}}
\newcommand{\seq}[1]{\ensuremath{\underline{#1}}}
\newcommand{\reg}{\mathrm{\mathrm{Reg}}}
\newcommand{\config}{\mathop{\mathrm{CONF}}}
\newcommand{\rot}{\mathop{\mathrm{Rot}}}
\title{On regular configurations and disjoint cycles in shift graphs}
\author{Taoyang Wu\\
 {\small Department of Computer Science and School of Mathematics
Sciences}\\
{\small Queen Mary, University of London. UK}\\
{\small Taoyang.Wu@dcs.qmul.ac.uk}}
\begin{document}

\maketitle

\begin{abstract}
  Configurations are necklaces with prescribed numbers of red and black beads.
   Among all possible configurations, the regular one plays an important
   role in many applications. In this paper, several aspects of regular configurations are discussed,
   including construction, uniqueness, symmetry group and the
   link with balanced words.

   Another model of configurations is the polygons formed by
    a given number of sides of two different lengths. In this context,
    regular configurations are used
     to obtain a lower bound for the cycles packing number of
    shift graphs, a subclass of the directed
    circulant graphs.
\end{abstract}

\section{Introduction}\label{sec:intro}

Configurations are necklaces with prescribed numbers of red beads
and black beads. More precisely, they are circular arrangements of
a fixed number of red and black beads.  Besides this model,
configurations can also be interpreted in many other ways, such as
finite words in symbolic dynamics~\cite{Lo},
 line drawing in computer graphics~\cite{HP}
  and the Kawasaki-Ising model in statistical
  mechanics.

The main interest of this paper is a class of extremal
configurations which we call regular configurations. Intuitively, in these
 configurations the colors are evenly distributed. Alternatively, they are the
closest configurations to the ``random" ones. Regular
configurations are closely linked with balanced words, a well
studied object in symbolic dynamics.

In this paper we will show that there is a unique regular
configuration (up to cyclic shifts) for a given number of red
beads and black beads. Furthermore, an algorithm for computing
this configuration is presented. Some other properties of regular
configurations are also discussed. To this end we introduce the
concept of dual configurations and prove that a configuration is
regular if and only if its dual configuration is regular.

Another topic in this paper is shift graphs. A shift graph is a
directed Cayley graph of $\mathbb{Z}_n$ with two generators. It is
also called double loop, cyclic graph or chordal ring in the
literature~\cite{GN,LM,BCH,Ma}.  Shift graphs form a subclass of
  circulant graphs, a type of graph that has been
intensively studied~\cite{BT,CGV,ET,LPW,ZYG} and has a vast number
of applications to telecommunication network, VLSI design and
distributed computations~\cite{BCH,Cai,Le,Ma}.

For shift graphs, we use regular configurations to obtain a lower
bound for the cycles packing number, the maximum number of
pairwise vertex disjoint cycles. This result also gives an
extremal result for sequences and is used to study the guessing
number of shift graphs~\cite{CamRiisWu}.

The remainder of this paper is organized as follows. In
Section~\ref{sec:model} we present a few models to represent
configurations.  The definition of regular configuration is given
in Section~\ref{sec:regular}, where we also briefly discuss some
of its properties. The notion of dual configurations is introduced
in Section~\ref{sec:dual}, with which we study several aspects of
regular configurations in Section~\ref{sec:properites}, including
construction, uniqueness and the symmetry group. The link between
regular configurations and balanced words is discussed in
Section~\ref{sec:words}. The application of regular configurations
in shift graphs is investigated in Section~\ref{sect:cycles}.

 \section{Configurations and models}\label{sec:model}

Given $a,b \in \mathbb{N}$, let $\config(a,b)$ denote the set
formed by all configurations with a pair of parameters $(a,b)$.
Intuitively, in {\em necklace model}, $a$ and $b$ are respectively
denoting the number of red beads and black beads. And we will use
$n=a+b$ to denote the length of necklaces. For instance,
Fig~\ref{fig:ball} is a necklace representing a configuration in
$\config(6,4)$. In this section, we are going to introduce several
other models that can represent the configurations in
$\config(a,b)$.

The second model is {\em the Kawasaki-Ising model}. In this model,
a configuration in $\config(a,b)$ is represented by a map $\phi$
from $V(C_n)$ to $\{+,-\}$ such that $|\phi^{-1}(+)|=a$. Here
$C_n$ is the cycle graph on $n$ vertices. Since a necklace in
$\config(a,b)$ can be regarded as a vertex coloring of $C_n$ such
that $a$ vertices are colored with red while the others with
black, by denoting red color by $+$ and black color by $-$ we can
convert a necklace into a map in this model. See
Fig~\ref{fig:Ising} for such an example, which is obtained from
the necklace in Fig~\ref{fig:ball}.

In some context, it is convenient to represent $+$ by $1$ and $-$
by $0$. Then the image of a map $\phi$ in $\config(a,b)$, written
as a sequence  $\phi(0)\phi(1)\cdots\phi(n-1)$, is a word over
$\{0,1\}$.  More precisely, in {\em word model}, a configuration
in $\config(a,b)$ is represented by a word of length $n$ and
weight $a$. Here the weight of a word $w$ is defined to be the
number of $1$s in it.

Another model is \emph{line model}, arising in compute
 graphics to answer the following problem: how to draw a zig-zag line from $(0,0)$ to $(a,b)$
 on the screen to approximate the ``real" line through these two points~\cite{HP}.
 We should notice that the scree is represented by the integer
 lattice $\mathbb{Z}^2$ and one step from $(x,y)$ is either $(x+1,y)$
 or $(x,y+1)$. As in Fig~\ref{fig:line},  each configuration can be represented by
 such a zig-zag line.

The last model we will mention is {\em polygon model}, which plays
an important role in Section~\ref{sect:cycles}. In this model, a
red bead is represented by a type I side, a side of length
$\alpha$, while a black bead by a type II side, a side of length
$\beta$. Here we always assume $\alpha \not = \beta$. Then a
configuration in $\config(a,b)$ is a polygon formed by $a$ type
I sides and $b$ type II sides.  See Fig~\ref{fig:polygon} for
a configuration in
  $\config(6,4)$ with $\alpha=1$ and $\beta=3$.

\begin{figure}[h]

\begin{minipage}[c]{2.5in}
\centering
\includegraphics[height=1.5in]{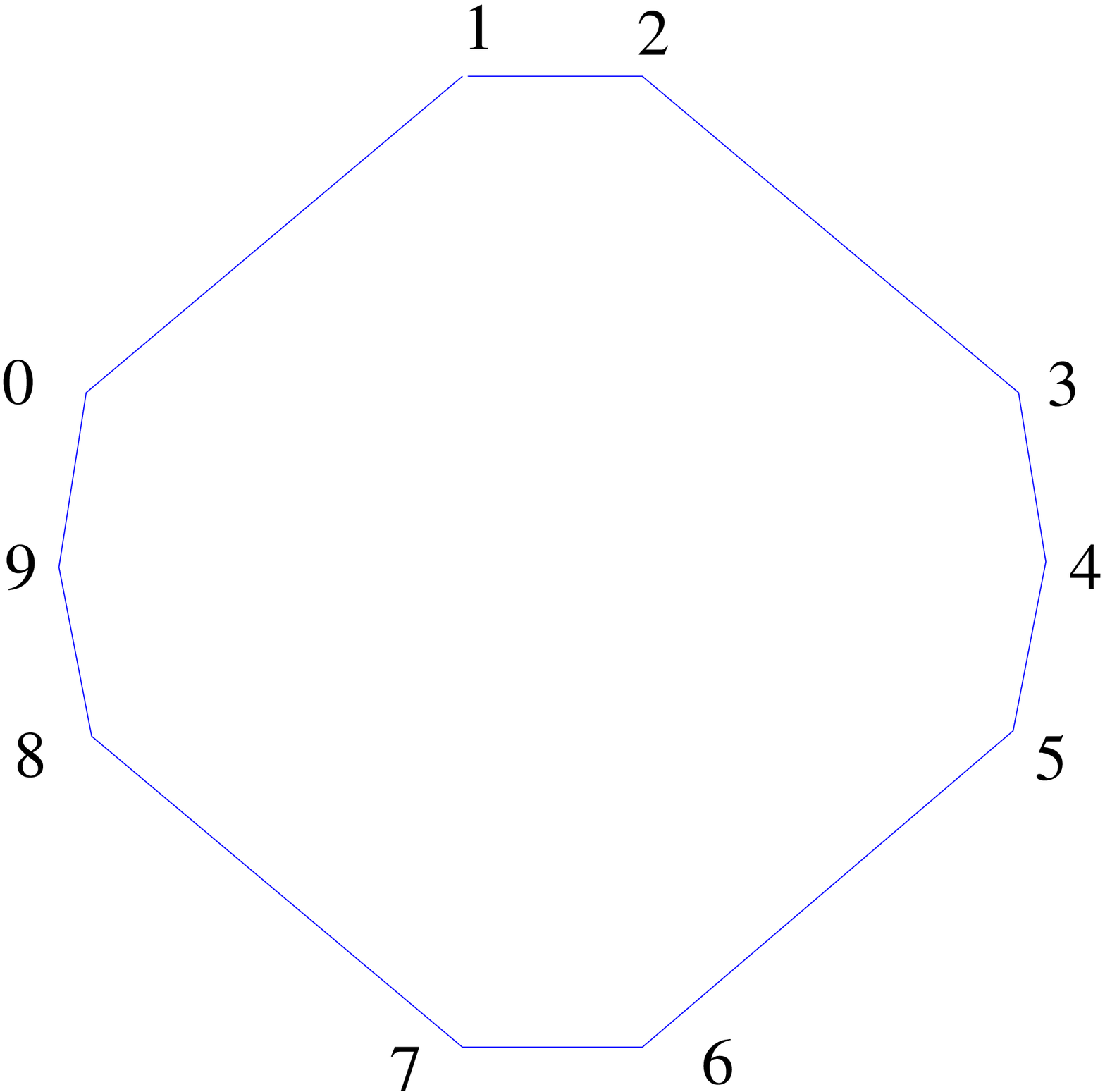}
\caption{Polygon Model}\label{fig:polygon}
\end{minipage}
\hspace{0.2in}
\begin{minipage}[c]{2.5in}
\centering %
\psfrag{B0}[cc][cc]{\small $B_0$}%
\psfrag{B1}[cc][cc]{\small $B_1$}%
\psfrag{B2}[cc][cc]{\small $B_2$}%
\psfrag{B3}[cc][cc]{\small $B_3$}
\includegraphics[height=1.5in]{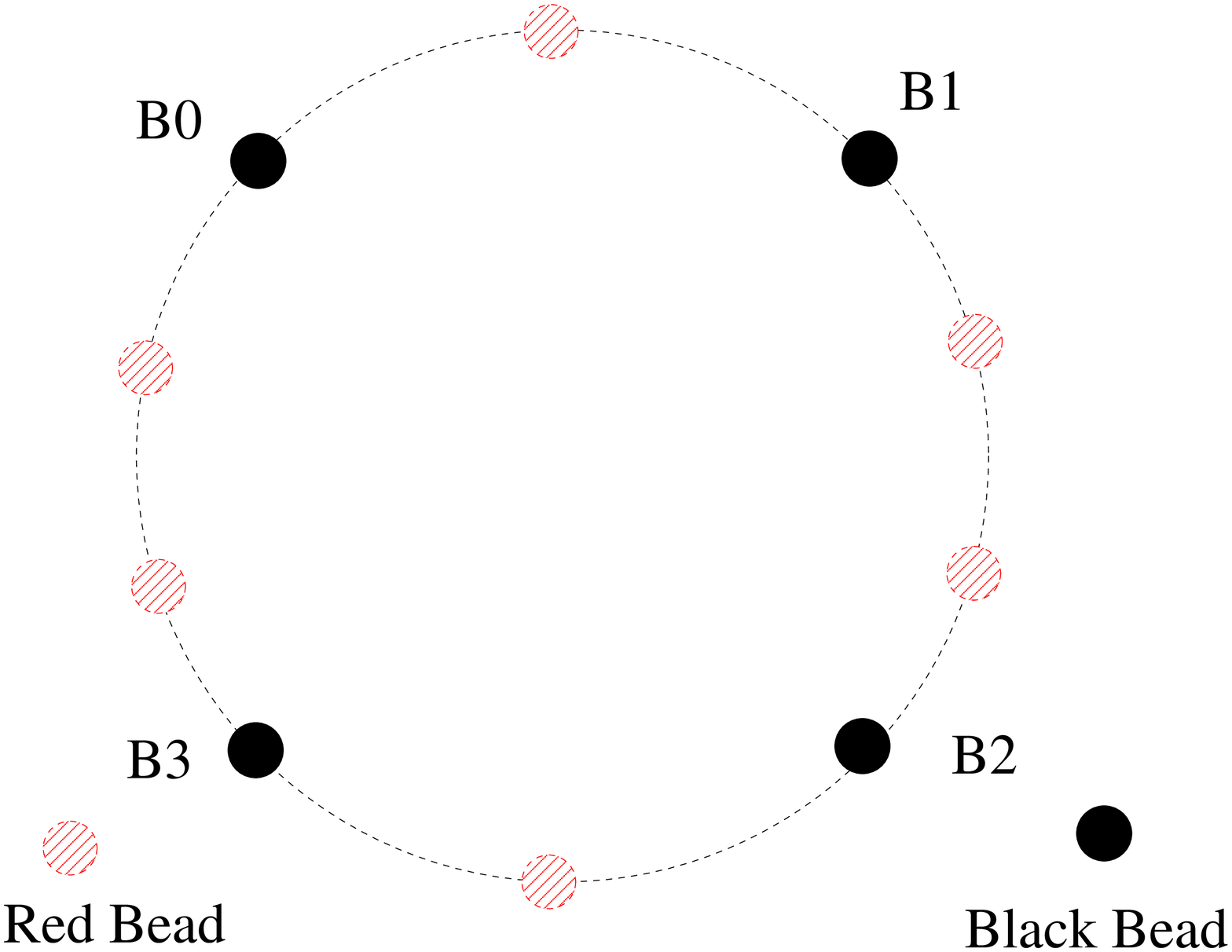}
\caption{Necklace Model}\label{fig:ball}
\end{minipage}

\end{figure}

\begin{figure}[h]

\begin{minipage}[c]{2.5in}
\centering
\includegraphics[height=1.5in]{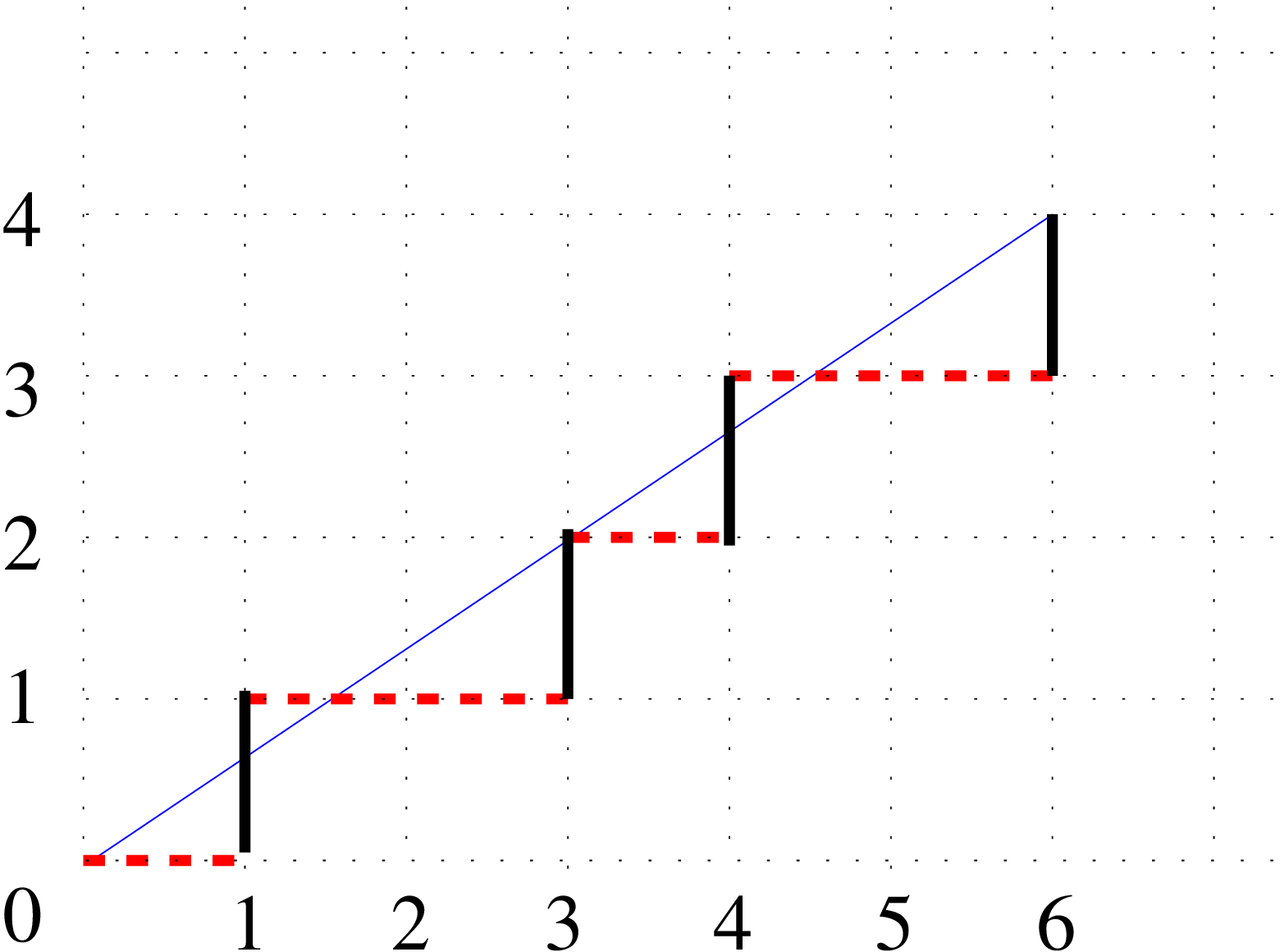}
\caption{Line Model}\label{fig:line}
\end{minipage}
\hspace{0.2in}
\begin{minipage}[r]{2.5in}
\centering
\includegraphics[height=1.5in]{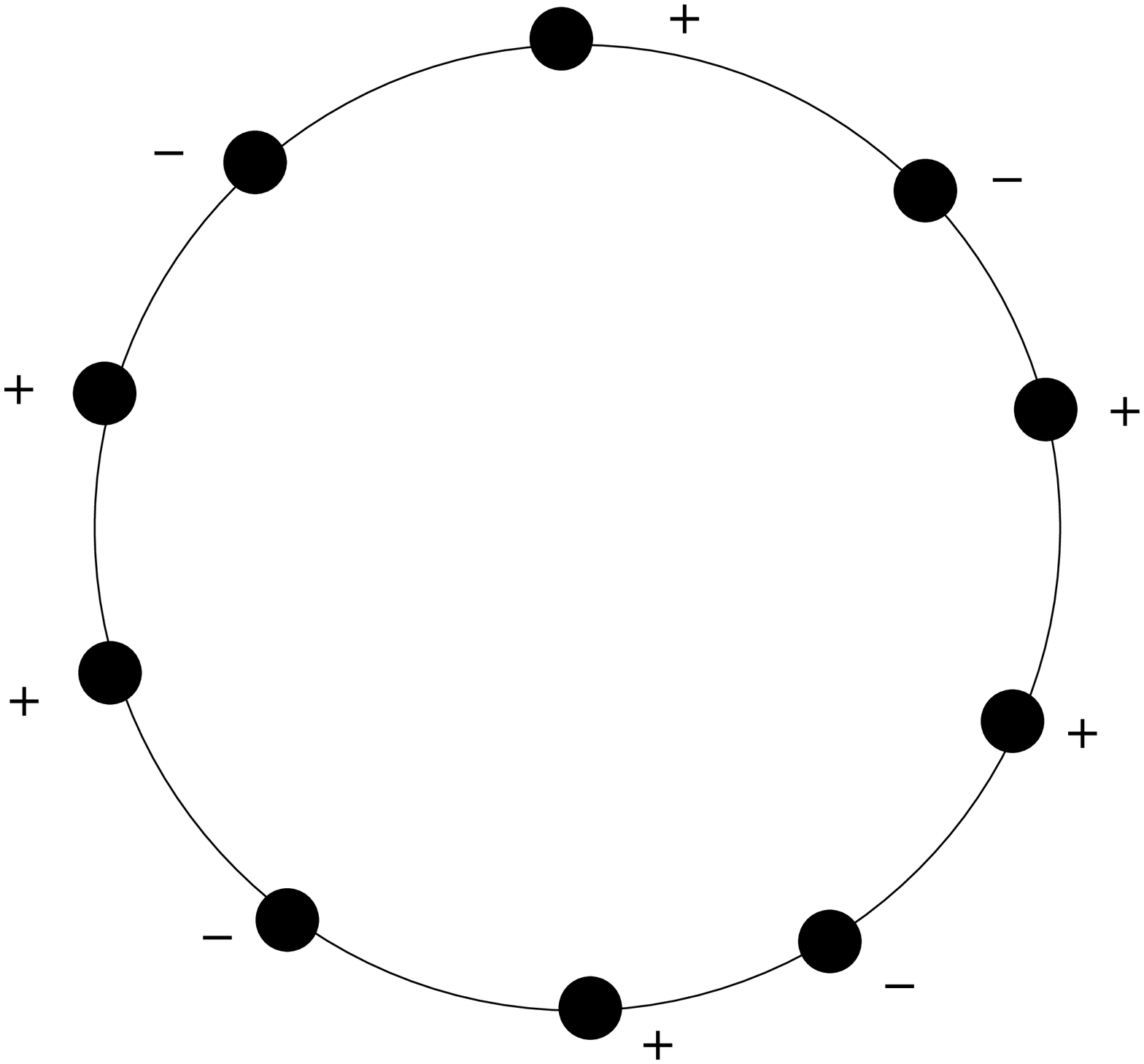}
\caption{Ising Model}\label{fig:Ising}
\end{minipage}

\end{figure}

Here we list all these models for $\config(a,b)$ because each
model relates configurations to different objects. Thus we can
study the structure of $\config(a,b)$ from different views. But in
this paper we are mainly focused on necklace model, with some
applications related with polygon model and word model. In another
 paper~\cite{CamWu}, we study regulations from the view of the Kawasaki-Ising model.

For later use, we will associate two labellings on beads in a
necklace. The first one is to label all beads consecutively from
$0$ to $n-1$. The second one is to consecutively label red(black)
beads from $0$ to $a-1$ ($b-1$). To avoid potential confusing, in
the second labelling the red (black) bead with label $i$ with be
denoted by $R_i$ ($B_i$). For the first kind labelling on
polygons, we assume the vertex is labelled with the number
assigned to its right side.

A necklace can be obtained by putting $b$ black beads in a round
and then inserting a certain amount of red beads between each pair
of consecutive black beads. More precisely, a configuration
$\Delta\in \config(a,b)$ can be represented by a sequence:
\begin{equation}\label{eq:repseq}
\Delta=\{B_0,\underbrace{R,\cdots,R}_{x_0},B_1,\underbrace{R,\cdots,R}_{x_1},
\cdots,B_{b-1},
\underbrace{R,\cdots,R}_{x_{b-1}}\}.
\end{equation}
 where $x_i$ is the number of
red beads between black beads $B_i$ and $B_{i+1}$.  For brevity,
we also say the sequence $\{x_0,\cdots,x_{b-1}\}$ is the
\emph{characteristic sequence} of the configuration $\Delta$. For
instance, $\{1,2,1,2\}$ is the characteristic sequence of the
configuration in Fig~\ref{fig:ball}.

Two necklaces are considered the same if we can cyclicly rotate
one to the other. To formulate this in the sequence level, we need
the following definitions.

\begin{definition}
A shift operator $\sigma$ on a sequence $\{x_0,\cdots,x_{b-1}\}$
is defined to be:
$\sigma\{x_0,\cdots,x_{b-1}\}=\{x_1,\cdots,x_{b-1},x_0\}$.
\end{definition}

\begin{definition}
$\{x_0,\cdots,x_{b-1}\} \sim \{x'_0,\cdots,x'_{b-1}\}$ if and only
if there exists an integer $t \in [0,b-1]$ such that
$\{x'_0,\cdots,x'_{b-1}\}=\sigma^t \{x_0,\cdots,x_{b-1}\}$
 where $\sigma^t$ means applying the operator $\sigma$ on the sequence for $t$
 times.
\end{definition}

In other words, $\sim$ is an equivalence relation. Furthermore,
$\{x_0,\cdots,x_{b-1}\}$ and $\{x'_0,\cdots,x'_{b-1}\}$ are in the
same equivalent class if and only if there exists  an integer $t
\in [0,b-1]$ such that $x_i=x'_{i+t}$ for all $i$. Here the index
of the elements in the sequence is calculated with modulo the
length of the sequence. The same convenience will apply to the
other sequences in this paper.

\begin{proposition}
 $\{x_0,\cdots,x_{b-1}\}$ and
$\{x'_0,\cdots,x'_{b-1}\}$ characterize the same configuration in
$\config(a,b)$ if and only if $\{x_0,\cdots,x_{b-1}\} \sim
\{x'_0,\cdots,x'_{b-1}\}$.
\end{proposition}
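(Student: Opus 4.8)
The plan is to show that the two directions of the equivalence are both essentially bookkeeping about how the characteristic sequence encodes a necklace, once we fix a reference point. The key observation is that the characteristic sequence $\{x_0,\dots,x_{b-1}\}$ is not quite an invariant of a configuration: it depends on which black bead we declare to be $B_0$. Relabelling the black beads cyclically (sending $B_i$ to $B_{i+1}$, i.e. rotating the necklace until the next black bead occupies the role of $B_0$) replaces $\{x_0,\dots,x_{b-1}\}$ by $\sigma\{x_0,\dots,x_{b-1}\}$. So the content of the proposition is exactly that (a) two sequences related by $\sim$ come from the same necklace, and (b) every necklace equal to a given one under cyclic rotation arises this way.

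For the ``if'' direction, suppose $\{x'_0,\dots,x'_{b-1}\}=\sigma^t\{x_0,\dots,x_{b-1}\}$ for some $t\in[0,b-1]$. Starting from the necklace $\Delta$ described by \eqref{eq:repseq}, perform the cyclic rotation of the circular arrangement that moves the block $B_t,\underbrace{R,\dots,R}_{x_t},B_{t+1},\dots$ to the front; concretely this is rotation by $\sum_{i=0}^{t-1}(x_i+1)$ positions in the first (consecutive) labelling. After this rotation the black bead formerly called $B_t$ plays the role of the first black bead, and by definition the gaps read off in order are $x_t,x_{t+1},\dots,x_{t+b-1}$, i.e.\ exactly $\sigma^t\{x_0,\dots,x_{b-1}\}=\{x'_0,\dots,x'_{b-1}\}$. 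Hence the sequence $\{x'_0,\dots,x'_{b-1}\}$ describes the same necklace, since we only applied a cyclic rotation. One small point to check here is that the sum $\sum_{i=0}^{b-1}(x_i+1)=a+b=n$, so these rotations are well defined as elements of $\mathbb{Z}_n$ and the indices in the rotated sequence wrap around correctly modulo $b$.

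For the ``only if'' direction, suppose $\{x_0,\dots,x_{b-1}\}$ and $\{x'_0,\dots,x'_{b-1}\}$ characterize the same configuration, i.e.\ the necklace built from the second is a cyclic rotation of the one built from the first. Let $r\in\mathbb{Z}_n$ be the rotation carrying one to the other. Both necklaces have exactly $b$ black beads, and a rotation sends black beads to black beads; so $r$ must carry the black bead $B_0$ of the first necklace to some black bead of the first necklace, say the one labelled $B_t$ with $t\in[0,b-1]$ (this is where we use that the rotation preserves the set of positions of black beads, so it acts as a cyclic shift on that $b$-element set). Reading off the gaps of the rotated necklace starting from its distinguished black bead then gives precisely $x_t,x_{t+1},\dots$, indices mod $b$, which is $\sigma^t\{x_0,\dots,x_{b-1}\}$. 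Therefore $\{x'_0,\dots,x'_{b-1}\}=\sigma^t\{x_0,\dots,x_{b-1}\}$, so the two sequences are $\sim$-equivalent.

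The routine parts are the two explicit translations between ``rotate the necklace by $k$ positions'' and ``apply $\sigma^t$ to the characteristic sequence''; the only genuinely delicate point, and the place I would be most careful, is the ``only if'' direction, where one must argue that an arbitrary cyclic rotation identifying the two necklaces necessarily maps black beads to black beads and hence descends to a shift $\sigma^t$ with $t$ a multiple-of-gap offset — in other words that the coarser cyclic symmetry on length-$n$ words restricts correctly to the length-$b$ characteristic sequence. Everything else is index chasing modulo $n$ and modulo $b$.
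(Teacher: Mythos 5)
Your proof is correct: both directions are handled properly, and the one delicate point (that a rotation identifying the two necklaces must send black beads to black beads and hence induces a shift by $t$ on the length-$b$ gap sequence) is addressed. The paper itself offers no written proof, stating only that the proposition ``can be verified directly from the definitions,'' and your argument is precisely that direct verification written out, so it matches the paper's (implicit) approach.
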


The above proposition can be verified directly from the
definitions. Intuitively, it says that a configuration is
characterized by a unique equivalence class of sequences.
Therefore, in the following sections we also use a sequence to
represent a configuration in $\config(a,b)$.

\section{Regular configurations}\label{sec:regular}

Among all possible configurations in $\config(a,b)$, the regular
one plays an important role in many applications. In this section
we will give a precise definition of regularity and study some of
its properties. When $a=0$ or $b=0$, there is only one
configuration in $\config(a,b)$, the necklace formed by all black
beads or by all red beads. To avoid this trivial case, in the
remainder of this paper, we will assume $a>0$ and $b>0$ unless
explicitly stated otherwise.

Given a configuration $\Delta\in \config(a,b)$ with its
characteristic sequence $\{x_0,\cdots,x_{b-1}\}$, the following
equation holds since each red bead should be inserted between a
pair of consecutive black beads.
\begin{equation}\label{eq:total}
x_0+x_1+\cdots+x_{b-1}=a
\end{equation}

Intuitively, when $a=bt$ for some $t\in \mathbb{N}$, the regular
configuration is characterized by the sequence $\{t,t,\cdots,t\}$. In
fact, in this case $t$ is the
 expected number of red beads between each pair of consecutive black beads in a random configuration. By random configuration we means to put the beads independently and randomly along the circle.
  But when $a/b$ is not an integer, we need the following more subtle condition. The characteristic sequence of a
regular configuration optimizes the following
problems:
\begin{equation}\label{exp:optimize}
Min(|(x_i+\cdots+x_{i+k-1})-k\frac{a}{b}|),  \forall i,k.
\end{equation}

The above expressions measure the deviation between two
quantities: the left one is $(x_i+\cdots+x_{i+k-1})$, the number
of red beads between $B_i$ and $B_{i+k}$ in the configuration, and
the right one is the expected number of red beads between $B_i$
and $B_{i+k}$ in a random configuration. The smaller value of this
deviation (or {\em discrepancy} as it is sometimes called) would
imply the configuration is closer to the random one. Here we should notice that the
deviation is measured for all possible $i$ and $k$.

 As $(x_i+\cdots+x_{i+k})$ is always
an integer, the expressions in~(\ref{exp:optimize}) can be
simplified as:
\begin{equation}\label{eq:regular}
\frac{a}{b}k-1<x_i+x_{i+1}+\cdots+x_{i+k-1}<\frac{a}{b}k+1
\end{equation}
for $0\leqslant i \leqslant {b-1}, 1\leqslant k \leqslant
1+\floor{\frac{b}{2}}$. Here we only need to consider the cases
for $1\leqslant k \leqslant 1+\floor{\frac{b}{2}}$ as the other
cases can be reduced from them via Equation (\ref{eq:total}).

Denote the system of inequalities in~(\ref{eq:regular}) by
$\reg(a,b)$. Then a sequence satisfies $\reg(a,b)$ if and only if
all its equivalent sequences satisfy it. With these preparations,
we are ready to present the formal definition of regularity.

\begin{definition}\label{prop:idealregular}
A configuration $\Delta$ in $\config(a,b)$ is regular if its
characteristic sequences satisfy the inequalities $\reg(a,b)$. In
this case, we also say its characteristic sequences are regular.
\end{definition}

Let $\mu_j$ be the minimal number of red beads among $j+1$
consecutive black beads. More precisely, let $\mu_{-1}=\mu_0=0$
and
$$\mu_j=\min_{0\leqslant i \leqslant
b-1}\{x_i+x_{i+1}+\cdots+x_{i+j-1}\}$$ for $0<j\leqslant b$.
 Then the regularity can
be characterized in the following way:
\begin{lemma}\label{lem:regsecond}
$\Delta$ is regular if and only if $1+\mu_j> \frac{a}{b}j$ for
$-1\leqslant j \leqslant b$.
\end{lemma}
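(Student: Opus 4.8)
The plan is to show the equivalence by translating the two conditions into a common form involving the partial sums $S_{i,k}=x_i+\cdots+x_{i+k-1}$ and the minima $\mu_j$, and then exploiting Equation~(\ref{eq:total}) to fold the ``upper half'' of the index range onto the ``lower half''. First I would observe that, by definition of $\mu_j$, the left inequality $\frac{a}{b}k - 1 < S_{i,k}$ holding for all $i$ is exactly the statement $\frac{a}{b}k - 1 < \mu_k$, i.e.\ $1 + \mu_k > \frac{a}{b}k$. So if $\reg(a,b)$ holds for all $i$ and all $1\leqslant k\leqslant b$, then the lower inequalities immediately give $1+\mu_j > \frac{a}{b}j$ for $1\leqslant j\leqslant b$, and the cases $j=-1,0$ are trivial since $\mu_{-1}=\mu_0=0$ and $\frac{a}{b}j\leqslant 0$ there (using $a,b>0$). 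The real content is therefore (a) that the upper inequalities in $\reg(a,b)$ are \emph{not} extra information beyond the lower ones, and (b) that it suffices to check $1\leqslant k\leqslant 1+\floor{b/2}$ rather than all $k\leqslant b$.

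For point (a), the key identity is that for any $i$ and any $k$ with $1\leqslant k\leqslant b$, the complementary block has size $b-k$ and $S_{i,k} + S_{i+k,\,b-k} = a$ by~(\ref{eq:total}) (the two blocks of consecutive $x$'s together wrap once around). Hence the upper bound $S_{i,k} < \frac{a}{b}k + 1$ is equivalent to $a - S_{i+k,\,b-k} < \frac{a}{b}k+1$, i.e.\ $S_{i+k,\,b-k} > a - \frac{a}{b}k - 1 = \frac{a}{b}(b-k) - 1$. Taking the minimum over $i$ (equivalently over the starting index $i+k$), ``all upper inequalities at level $k$'' is equivalent to ``all lower inequalities at level $b-k$'', i.e.\ to $1 + \mu_{b-k} > \frac{a}{b}(b-k)$. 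Thus the full system $\reg(a,b)$ over all $i$ and all $1\leqslant k\leqslant b$ is equivalent to the single family of conditions $1+\mu_j > \frac{a}{b}j$ for $1\leqslant j\leqslant b-1$, together with the lower inequality at $k=b$, which reads $1+\mu_b > a$; but $\mu_b = a$ by~(\ref{eq:total}), so this last one is automatic, and it also matches $j=b$ in the lemma's range. Combining with the trivial $j=-1,0$ cases, we get exactly $1+\mu_j > \frac{a}{b}j$ for $-1\leqslant j\leqslant b$.

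For point (b), I would note that the reduction of the index range to $1\leqslant k\leqslant 1+\floor{b/2}$ in~(\ref{eq:regular}) is already asserted in the text (``the other cases can be reduced from them via Equation~(\ref{eq:total})''), and it is precisely the $k \leftrightarrow b-k$ symmetry used above: the lower inequality at level $k$ and the upper inequality at level $b-k$ carry the same information, so running $k$ up to $1+\floor{b/2}$ covers every pair $\{k, b-k\}$. Therefore Definition~\ref{prop:idealregular} (regularity via the truncated system) is equivalent to the full system, which by the previous paragraph is equivalent to $1+\mu_j > \frac{a}{b}j$ for all $-1\leqslant j\leqslant b$.

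I expect the main obstacle to be purely bookkeeping rather than conceptual: being careful about the index arithmetic modulo $b$ when splitting the cycle into the block of length $k$ and its complement of length $b-k$, and making sure the ``$\min$ over $i$'' commutes correctly with the substitution $i\mapsto i+k$ (it does, since as $i$ ranges over $\mathbb{Z}_b$ so does $i+k$). One should also double-check the degenerate endpoints $j=-1$ and $j=0$, and the level $k=b$ case, so that the stated range $-1\leqslant j\leqslant b$ is matched exactly with no off-by-one error; the hypotheses $a>0$, $b>0$ are what make the $j\leqslant 0$ inequalities hold trivially.
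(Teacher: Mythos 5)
Your proposal is correct and follows essentially the same route as the paper: both directions rest on the complementarity $S_{i,k}+S_{i+k,\,b-k}=a$ from Equation~(\ref{eq:total}), which converts the upper inequalities at level $k$ into lower inequalities at level $b-k$ and hence into the conditions $1+\mu_j>\frac{a}{b}j$. Your write-up is somewhat more explicit than the paper's (which dispatches the forward direction as ``verified directly'' and gives only the key chain $S_{i,k}\leqslant a-\mu_{b-k}<1+\frac{a}{b}k$ for the converse), but the underlying argument is the same.
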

\begin{proof}
``$\Rightarrow$" This direction can be verified directly from
the inequalities in~(\ref{eq:regular}). \\
``$\Leftarrow$" In this direction, the left inequalities in
~(\ref{eq:regular}) are easy. From the assumptions and Equation~(\ref{eq:total}), we have:
$$x_i+x_{i+1}+\cdots+x_{i+k-1}\leqslant
a-\mu_{b-k}<a-(\frac{a}{b}(b-k)-1)=1+\frac{a}{b}k$$ for
$0\leqslant i \leqslant {b-1}, 1\leqslant k \leqslant
1+\floor{\frac{b}{2}}$. This completes the right ones.
\end{proof}

The following two propositions  can be
verified directly from the above lemma. Herew we use the fact that
 there exists a unique regular configuration in $\config(a,b)$, which will be proved in Section~\ref{sec:properites}
\begin{proposition}\label{prop:regbase}
When $a=bt$, the regular configuration in $\config(a,b)$ is characterized by the sequence $\{t,t,\cdots,t\}$.
\end{proposition}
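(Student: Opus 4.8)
The plan is to verify Proposition~\ref{prop:regbase} directly using the characterization in Lemma~\ref{lem:regsecond}, together with the uniqueness of the regular configuration in $\config(a,b)$ (proved in Section~\ref{sec:properites}). Since uniqueness guarantees there is exactly one regular configuration, it suffices to exhibit one sequence in $\config(a,b)$ that satisfies $\reg(a,b)$; the constant sequence $\{t,t,\cdots,t\}$ is the obvious candidate, and then any configuration with that characteristic sequence must be \emph{the} regular one.

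First I would check that $\{t,t,\cdots,t\}$ is a legitimate characteristic sequence for $\config(a,b)$: its $b$ entries are non-negative integers summing to $bt = a$, so Equation~(\ref{eq:total}) holds. Next I would compute $\mu_j$ for this sequence. Because every entry equals $t$, every window of $j$ consecutive entries sums to $jt$, so $\mu_j = jt$ for $0 < j \leqslant b$, and this agrees with the stipulated values $\mu_{-1} = \mu_0 = 0$ at $j \in \{-1, 0\}$ (noting $\mu_0 = 0 = 0\cdot t$). Then I would plug into the inequality from Lemma~\ref{lem:regsecond}: we need $1 + \mu_j > \frac{a}{b} j$ for $-1 \leqslant j \leqslant b$. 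Substituting $\mu_j = jt$ and $\frac{a}{b} = t$, this reads $1 + jt > tj$, i.e. $1 > 0$, which holds for every $j$ in the range. Hence the sequence is regular, so the configuration it characterizes is regular.

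Finally, invoking the uniqueness of the regular configuration in $\config(a,b)$, the configuration characterized by $\{t,t,\cdots,t\}$ is the unique regular configuration, which is exactly the claim.

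I do not anticipate any real obstacle here: the only things to be careful about are the boundary indices $j = -1$ and $j = 0$ (handled by the convention $\mu_{-1} = \mu_0 = 0$ and the trivial check that the inequality $1 + \mu_j > \frac{a}{b}j$ still holds there, since at $j=-1$ it reads $1 > -t$ and at $j = 0$ it reads $1 > 0$), and the logical point that we are relying on the forward reference to uniqueness in Section~\ref{sec:properites}, which the paper has already flagged as permissible. Everything else is a one-line substitution.
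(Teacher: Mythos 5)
Your proof is correct and follows exactly the route the paper intends: the paper states that Proposition~\ref{prop:regbase} ``can be verified directly from'' Lemma~\ref{lem:regsecond} together with the forward-referenced uniqueness result, and your computation ($\mu_j = jt$, so $1+\mu_j > \frac{a}{b}j$ reduces to $1>0$) is precisely that direct verification. Your handling of the boundary indices $j=-1,0$ and your note that only uniqueness (not existence) is being invoked are both sound, so there is nothing to add.
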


\begin{proposition}\label{prop:regbaseTwo}
When $a=bt+1$, the regular configuration in $\config(a,b)$ is characterized by the sequence $\{t+1,t,\cdots,t\}$.
\end{proposition}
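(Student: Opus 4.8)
The plan is to apply Lemma~\ref{lem:regsecond} directly. We must show that the configuration with characteristic sequence $\{t+1,t,\dots,t\}$ (where $a=bt+1$) satisfies $1+\mu_j > \frac{a}{b}j$ for $-1\leqslant j \leqslant b$, and then invoke the uniqueness of the regular configuration (proved in Section~\ref{sec:properites}) to conclude that this \emph{is} the regular configuration in $\config(a,b)$. So the core of the work is computing the quantities $\mu_j$ for this specific sequence and checking the inequality.

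First I would compute $\mu_j$ explicitly. For the sequence $\{t+1,t,\dots,t\}$, a window of $j$ consecutive entries (indices taken modulo $b$) contains the single ``$t+1$'' entry either once or not at all, depending on where the window starts. Hence the sum over any window of length $j$ with $0<j\leqslant b$ is either $jt$ or $jt+1$, and the minimum is $\mu_j=jt$ for $0\leqslant j\leqslant b-1$, while $\mu_b=a=bt+1$ since every full window sees the $t+1$. (The boundary values $\mu_{-1}=\mu_0=0$ are as defined.) Then I would substitute into the inequality of Lemma~\ref{lem:regsecond}: for $0\leqslant j\leqslant b-1$ we need $1+jt > \frac{bt+1}{b}j = jt + \frac{j}{b}$, i.e. $1 > \frac{j}{b}$, which holds precisely because $j\leqslant b-1<b$. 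For $j=b$ we need $1+\mu_b = 1+bt+1 > \frac{bt+1}{b}\cdot b = bt+1$, i.e. $bt+2 > bt+1$, which is true. For $j=-1$ we need $1+\mu_{-1}=1 > -\frac{a}{b}$, trivially true. So all the inequalities hold and the sequence is regular.

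Finally, by the uniqueness statement quoted before Proposition~\ref{prop:regbase} (there is a unique regular configuration in $\config(a,b)$ up to cyclic shift), the configuration characterized by $\{t+1,t,\dots,t\}$ must be \emph{the} regular configuration, completing the proof.

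I do not anticipate a genuine obstacle here: the only mild subtlety is being careful that the window of length $j$ with $j\leqslant b-1$ genuinely can avoid the lone $t+1$ entry (so that $\mu_j=jt$ rather than $jt+1$), which is clear since there are $b$ cyclic starting positions and only one of the $b$ entries is distinguished — for $j\leqslant b-1$ at least one starting position places the window entirely among the $t$'s. The rest is the routine arithmetic displayed above, together with the forward reference to uniqueness.
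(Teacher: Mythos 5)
Your proposal is correct and takes essentially the same route as the paper, which states that this proposition ``can be verified directly from'' Lemma~\ref{lem:regsecond} together with the forward-referenced uniqueness of the regular configuration --- precisely the verification you carry out. Your computation of $\mu_j$ (namely $\mu_j=jt$ for $0\leqslant j\leqslant b-1$ and $\mu_b=a$) and the check of $1+\mu_j>\frac{a}{b}j$ are both accurate.
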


Intuitively, inserting an equal amount of red beads between each
pair of consecutive black beads will not affect the regularity of
the original configuration. This can be stated explicitly as the
following proposition.
\begin{proposition}\label{prop:indreg}
Let $a=tb+r$. The regular configuration in $\config(a,b)$ is characterized
 by the sequence $\{x_0,\cdots,x_{b-1}\}$ if and only if
 the
regular configuration in $\config(r,b)$ is characterized by the sequence
$\{x'_0,\cdots,x'_{b-1}\}$ with
$x'_j=x_j-t$.
\end{proposition}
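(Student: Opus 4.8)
The plan is to reduce everything to the characterization of regularity given in Lemma~\ref{lem:regsecond}. Write $\seq{x}=\{x_0,\dots,x_{b-1}\}$ with $\sum_i x_i = a = tb+r$, and set $x'_j = x_j - t$ for all $j$; then $\sum_j x'_j = a - tb = r$, so $\seq{x'}$ is at least a candidate characteristic sequence for $\config(r,b)$ (one should first note $x'_j\geqslant 0$, which will follow once we know $\seq{x}$ is regular, since regularity forces each $x_j$ to be at least $t$ when $a=tb+r$ with $r\geqslant 0$; alternatively this is part of what the equivalence establishes in each direction). Let $\mu_j$ and $\mu'_j$ denote the quantities from Lemma~\ref{lem:regsecond} for $\seq{x}$ and $\seq{x'}$ respectively. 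The key elementary observation is that for each window of $j$ consecutive indices, $x'_i + \dots + x'_{i+j-1} = (x_i+\dots+x_{i+j-1}) - tj$, and since this identity holds termwise for every starting position $i$, taking the minimum over $i$ commutes with subtracting the constant $tj$; hence $\mu'_j = \mu_j - tj$ for $0\leqslant j\leqslant b$, and trivially $\mu'_{-1}=\mu_{-1}=0$, $\mu'_0=\mu_0=0$.

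With this in hand the proof is a one-line computation in both directions. By Lemma~\ref{lem:regsecond}, $\seq{x}$ is regular for $\config(a,b)$ iff $1+\mu_j > \frac{a}{b}j$ for all $-1\leqslant j\leqslant b$, i.e. iff $1 + \mu'_j + tj > \frac{tb+r}{b}j = tj + \frac{r}{b}j$ for all such $j$, i.e. iff $1+\mu'_j > \frac{r}{b}j$ for all $-1\leqslant j\leqslant b$, which by Lemma~\ref{lem:regsecond} again (applied to the parameters $(r,b)$) is exactly the statement that $\seq{x'}$ is regular for $\config(r,b)$. Finally one invokes the uniqueness of the regular configuration in each of $\config(a,b)$ and $\config(r,b)$ (stated in the excerpt, to be proved in Section~\ref{sec:properites}) to pass from ``the characteristic sequence $\seq{x}$ is regular'' to ``the regular configuration is characterized by $\seq{x}$'', translating the equivalence of regularity of sequences into the asserted equivalence about the regular configurations; here one also uses that the map $\seq{x}\mapsto\seq{x'}$ is a bijection compatible with the shift operator $\sigma$, so it respects the equivalence $\sim$ and sends equivalence classes to equivalence classes.

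I do not expect a genuine obstacle in this argument; it is essentially bookkeeping. The one point that needs a little care is the handling of $\mu'_j$ for $j=0$ and $j=-1$, where the convention $\mu_{-1}=\mu_0=0$ is imposed by hand rather than read off from a minimum, so the identity $\mu'_j = \mu_j - tj$ should be checked to be consistent with the convention (it is: at $j=0$ both sides are $0$, and at $j=-1$ the convention gives $0$ on both sides regardless of $t$, while the formula would give $0 - t(-1) = t$; so at $j=-1$ one must use the convention directly rather than the formula, and simply note that the inequality $1+\mu'_{-1} > \frac{r}{b}(-1)$, i.e. $1 > -\frac{r}{b}$, is automatic and carries no information, exactly as $1+\mu_{-1}>\frac{a}{b}(-1)$ does). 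The other mild point is justifying $x'_j\geqslant 0$; the cleanest route is to prove the biconditional at the level of ``regular characteristic sequences'' as above — where nonnegativity of $x'_j$ is a consequence of regularity via $x'_j = x'_j \geqslant \mu'_1 = \mu_1 - t$ and $1+\mu_1 > \frac{a}{b}\geqslant t$ forces $\mu_1\geqslant t$ hence $x'_j\geqslant 0$ for the relevant single-index windows — and only then invoke uniqueness.
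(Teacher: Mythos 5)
Your proposal is correct and follows essentially the same route as the paper: both reduce the statement to Lemma~\ref{lem:regsecond} via the identity $\mu'_j=\mu_j-tj$ and the computation $1+\mu_j>\frac{a}{b}j \iff 1+\mu'_j>\frac{r}{b}j$ (the paper writes out one direction and leaves the converse as an exercise, which your iff-chain completes). Your extra care about the $j=-1,0$ conventions, the nonnegativity of $x'_j$, and the appeal to uniqueness are reasonable refinements of details the paper glosses over, not a different argument.
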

\begin{proof}
We will prove one direction and leave the other as an
 exercise to the readers. From Lemma~{\ref{lem:regsecond}}, we
 have $1+\mu_j>\frac{a}{b}j$, which implies
$$1+\mu'_j=1+\mu_j-tj>\frac{a}{b}j-tj=\frac{bt+r}{b}j-tj=\frac{r}{b}j$$
for $-1\leqslant j \leqslant b$. By Lemma~{\ref{lem:regsecond}},
this means that $\{x'_0,\cdots,x'_{b-1}\}$ is regular.
\end{proof}

\section{Dual configurations}\label{sec:dual}
In this section we will introduce the concept of duality and prove
that one configuration is regular if and only if its dual
configuration is regular.

Let $\Delta$ be a configuration in $\config(a,b)$ characterizing
by $\{x_0,\cdots,x_{b-1}\}$. More precisely, $\Delta$ can be
expressed in the following form:
$$\Delta=\{B_0,\underbrace{R,\cdots,R}_{x_0},B_1,\underbrace{R,\cdots,R}_{x_1},\cdots,B_{b-1},
\underbrace{R,\cdots,R}_{x_{b-1}}\}.$$

From it we can construct a new configuration:
$$\Delta^{*}=\{R^{*}_0,\underbrace{B^{*},\cdots,B^{*}}_{x_0},R^{*}_1,
\underbrace{B^{*},\cdots,B^{*}}_{x_1},\cdots,R^{*}_{b-1},
\underbrace{B^{*},\cdots,B^{*}}_{x_{b-1}}\}.$$

Intuitively, $\Delta^{*}$ is obtained from $\Delta$ by switching
the colors of the beads. More explicitly, $B_i^{*}$, a black bead
in $\Delta^{*}$, is obtained from $R_i$ in $\Delta$ and $R_j^{*}$
is from $B_j$.
 Then $\Delta^{*}$ belongs to $\config(b,a)$ and
it can be expressed in the following way.
$$\Delta^{*}=\{B^{*}_0,\underbrace{R^{*},\cdots,R^{*}}_{y_0},B^{*}_1,
\underbrace{R^{*},\cdots,R^{*}}_{y_1},\cdots,B^{*}_{a-1},
\underbrace{R^{*},\cdots,R^{*}}_{y_{a-1}}\}.$$

 Here $y_i$ in the above representation is the number of red beads between $B^{*}_i$
 and $B^{*}_{i+1}$ in  $\Delta^{*}$, which is equal to the number of black beads between
$R_i$ and $R_{i+1}$ in $\Delta$ from the
 construction. See Fig~\ref{fig:dual} for the
dual configuration
 of that in Fig~\ref{fig:ball}.

\begin{figure}[h]
\centering
\includegraphics[height=1.5in]{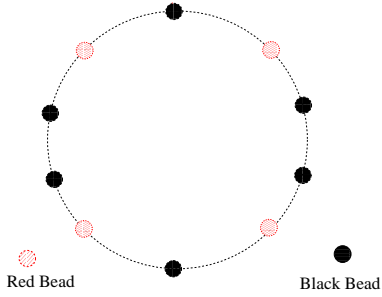}
\caption{A dual configuration}\label{fig:dual}
\end{figure}

Since the color on each bead will remain the same after twice
switching, we have the following proposition.

\begin{proposition}\label{prop:doubledual}
$(\Delta^{*})^{*}=\Delta$.
\end{proposition}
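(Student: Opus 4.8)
The statement $(\Delta^*)^* = \Delta$ asserts that applying the dualization construction twice returns the original configuration. The plan is to unwind the construction carefully at the level of the explicit bead sequences. Dualization does two things simultaneously: it recolors every bead (red $\leftrightarrow$ black) and it relabels, so that the red bead $R_i$ of $\Delta$ becomes the black bead $B_i^*$ of $\Delta^*$, and the black bead $B_j$ of $\Delta$ becomes a red bead $R_j^*$ of $\Delta^*$. The circular cyclic order of the beads along the necklace is left untouched. Since recoloring is an involution on the color set $\{$red, black$\}$, the key point is simply to verify that the \emph{labels} also come back correctly after two applications, and that the cyclic arrangement is literally unchanged.

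First I would write out $\Delta$ in the standard form
$$\Delta=\{B_0,\underbrace{R,\cdots,R}_{x_0},B_1,\underbrace{R,\cdots,R}_{x_1},\cdots,B_{b-1},\underbrace{R,\cdots,R}_{x_{b-1}}\},$$
and record that $\Delta^*$ is obtained by the rule: keep the cyclic sequence of bead positions fixed, turn each black bead into a red bead and each red bead into a black bead, and carry the labels along, so the bead formerly called $B_j$ is now called $R_j^*$ and the bead formerly called $R_i$ is now called $B_i^*$. Then I would apply exactly the same rule to $\Delta^*$ to form $(\Delta^*)^*$: each $R_j^*$ becomes a black bead $B_j^{**}$ and each $B_i^*$ becomes a red bead $R_i^{**}$, again with the cyclic order of positions preserved. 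Composing the two steps, a bead that started as $B_j$ in $\Delta$ became $R_j^*$ in $\Delta^*$ and becomes $B_j^{**}$ in $(\Delta^*)^*$ — so it is again black and again carries label $j$; likewise a bead that started as $R_i$ returns to being a red bead with label $i$. Since neither step disturbs the cyclic order of positions, the bead occupying each position of $(\Delta^*)^*$ has exactly the same color and label as in $\Delta$, hence $(\Delta^*)^* = \Delta$ as configurations (indeed as labelled necklaces).

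I do not expect a serious obstacle here; the only thing requiring a moment's care is bookkeeping of the two labellings introduced in Section~\ref{sec:model} — checking that the relabelling of red beads in $\Delta$ as $0,\dots,b-1$ in $\Delta^*$ (and black beads of $\Delta^*$ back to $0,\dots,b-1$ in $(\Delta^*)^*$) is consistent, i.e. that the label assigned to the $i$-th red bead of $\Delta$ coincides with the label assigned, two steps later, to the corresponding red bead of $(\Delta^*)^*$. This follows because the relabelling is always done by reading consecutively around the necklace starting from the same reference point, and that reading order is preserved throughout; alternatively, one can phrase the whole argument in the characteristic-sequence language and note that $\Delta \mapsto \Delta^*$ simply reinterprets the word $\phi(0)\phi(1)\cdots\phi(n-1)$ over $\{0,1\}$ by swapping $0$ and $1$, and swapping twice is the identity on words, hence the identity on configurations. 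Either formulation gives the result in a few lines.
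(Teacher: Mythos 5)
Your proposal is correct and matches the paper's argument, which disposes of the proposition with the single observation that switching the colors of the beads twice leaves every bead unchanged; your version merely adds the (harmless and reasonable) extra bookkeeping confirming that the labels and cyclic order are also restored. No issues.
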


Recall that the regular condition, $\reg(b,a)$, for the
configurations in $\config(b,a)$ is:
\begin{equation}\label{eq:dual}
\frac{b}{a}t-1<y_j+y_{j+1}+\cdots+y_{j+t-1}<\frac{b}{a}t+1
\end{equation}
for all $0\leqslant j \leqslant a-1$ and $1\leqslant t \leqslant
1+\floor{\frac{a}{2}}$.

The following lemma is the main result of this section, which
plays an important role in the remainder of this paper.
\begin{lemma}\label{lem:dual}
A configuration in $\config(a,b)$ is regular if and only if its
dual configuration, which belongs to  $\config(b,a)$, is regular.
\end{lemma}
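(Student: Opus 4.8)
The plan is to use the characterization of regularity given in Lemma~\ref{lem:regsecond}, translated to both $\Delta$ and $\Delta^{*}$, and to find a clean dictionary between the "minimal block sums" $\mu_j$ of $\Delta$ and the corresponding quantities $\mu^{*}_i$ of $\Delta^{*}$. Recall that $\Delta$ is the necklace $\{B_0, R^{x_0}, B_1, R^{x_1}, \dots, B_{b-1}, R^{x_{b-1}}\}$ of length $n = a+b$, and $\Delta^{*}$ is obtained by swapping the two colours, so that in $\Delta^{*}$ a "black" bead $B^{*}_i$ sits where $R_i$ sat in $\Delta$, and $y_i$ (the number of $R^{*}$'s between $B^{*}_i$ and $B^{*}_{i+1}$) equals the number of $B_\ell$'s strictly between $R_i$ and $R_{i+1}$ in $\Delta$. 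By Lemma~\ref{lem:regsecond}, $\Delta$ is regular iff $1 + \mu_j > \frac{a}{b} j$ for $-1 \leqslant j \leqslant b$, and $\Delta^{*}$ is regular iff $1 + \mu^{*}_i > \frac{b}{a} i$ for $-1 \leqslant i \leqslant a$, where $\mu^{*}_i = \min_\ell (y_\ell + \cdots + y_{\ell + i - 1})$.

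First I would set up the combinatorial dictionary. Fix a window of consecutive beads in the cyclic necklace $\Delta$. A block of $j+1$ consecutive black beads $B_i, B_{i+1}, \dots, B_{i+j}$ together with the red beads strictly between them spans an arc containing exactly $j+1$ black beads and $x_i + \cdots + x_{i+j-1}$ red beads; dually, a block of $i+1$ consecutive $R$'s in $\Delta$ spans an arc containing $i+1$ red beads and $y_\ell + \cdots + y_{\ell+i-1}$ black beads for the appropriate $\ell$. The key observation is a "complementarity" statement: an arc of the cyclic word that starts just after a black bead and ends at a black bead, and which contains $p$ black beads and $q$ red beads, has a complementary arc containing $b - p$ black beads and $a - q$ red beads; and an arc that has $q$ red beads with the minimum possible number of black beads corresponds, in the complement, to $a - q$ red beads with the maximum possible number of black beads. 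Making this precise gives the identity relating extremal red-counts in $\Delta$ to extremal black-counts, i.e. relating $\mu_j$ of $\Delta$ to the $\mu^{*}_i$ of $\Delta^{*}$. Concretely I expect a relation of the shape: $\mu^{*}_i \geqslant p$ iff every arc containing $i+1$ red beads contains at least $p$ black beads iff (passing to complements) every arc containing $a - i - 1$ red beads contains at most $b - p$ black beads, which can be re-expressed via the $\mu_j$'s of $\Delta$. Working this out carefully yields something like: $1 + \mu^{*}_i > \frac{b}{a} i$ for all $i$ is equivalent to $1 + \mu_j > \frac{a}{b} j$ for all $j$.

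The cleanest route is probably to avoid an exact formula for $\mu^{*}_i$ and instead argue by contrapositive on each failing inequality. Suppose $\Delta^{*}$ is not regular; by Lemma~\ref{lem:regsecond} there is some $i$ with $1 + \mu^{*}_i \leqslant \frac{b}{a} i$, i.e. there is an arc of $\Delta$ containing $i+1$ red beads but at most $\frac{b}{a} i - 1$ black beads (using integrality, at most $\lceil \frac{b}{a} i \rceil - 1$). I would then look at the complementary arc, which contains $a - i - 1$ red beads and at least $b - \lceil \frac{b}{a} i \rceil + 1$ black beads; converting "black beads $\geqslant$ something" into a statement about $x$-block sums and doing the arithmetic $\frac{a}{b}(\text{that many black beads}) $ versus the red count should produce a violation of the form $1 + \mu_{j} \leqslant \frac{a}{b} j$ for $j = b - \lceil \frac{b}{a} i\rceil + 1$ or a nearby value, showing $\Delta$ is not regular. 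By Proposition~\ref{prop:doubledual}, $(\Delta^{*})^{*} = \Delta$, so the reverse implication is the same argument applied to $\Delta^{*}$, and the lemma follows.

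The main obstacle I anticipate is purely bookkeeping: getting the off-by-one counts exactly right (does an arc "between $B_i$ and $B_{i+k}$" include its endpoints, how many red beads are strictly inside, and how the floor/ceiling in the range $1 \leqslant k \leqslant 1 + \floor{b/2}$ interacts with the complementation), and making sure the strict-versus-non-strict inequalities survive the passage through $\frac{a}{b} \leftrightarrow \frac{b}{a}$ and through integrality. In particular one must check that the $\frac{a}{b} \cdot (\text{black count})$ computed from the complementary arc lines up with $\frac{a}{b} j - 1$ and not merely $\frac{a}{b} j$, which is where the symmetry $1 + \mu_j > \frac{a}{b} j \Leftrightarrow \mu_j \geqslant \frac{a}{b} j - 1 + \varepsilon$ and the trick already used in the proof of Lemma~\ref{lem:regsecond} (subtracting block sums from $a$ via Equation~(\ref{eq:total})) will carry the day.
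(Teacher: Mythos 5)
Your route is sound and genuinely different from the paper's. The paper proves the forward direction head-on: it fixes a window of $t$ consecutive $y$'s, lets $\tau$ be the number of black beads of $\Delta$ lying between $R_1$ and $R_{t+1}$, and splits into the three cases $\tau=0$, $\tau=1$, $\tau\geqslant 2$, in each case writing down the corresponding fragment of $\Delta$ (with end-corrections $\epsilon_1,\epsilon_2$) and squeezing $\tau$ between $\frac{b}{a}t-1$ and $\frac{b}{a}t+1$ using both sides of $\reg(a,b)$. You instead argue contrapositively through the one-sided characterization of Lemma~\ref{lem:regsecond} together with arc complementation, which treats all cases uniformly. For the record, the bookkeeping you deferred does close, and more cleanly than you feared: if $1+\mu^{*}_i\leqslant\frac{b}{a}i$, take a minimizing arc from $R_\ell$ to $R_{\ell+i}$ (so $i+1$ red and $\mu^{*}_i$ black beads); its complement contains $a-i-1$ red beads and $p=b-\mu^{*}_i$ black beads $B_s,\dots,B_{s+p-1}$, so $\mu_{p-1}\leqslant x_s+\cdots+x_{s+p-2}\leqslant a-i-1$, whence $1+\mu_{p-1}\leqslant a-i\leqslant a-\frac{a}{b}(\mu^{*}_i+1)=\frac{a}{b}(p-1)$, violating Lemma~\ref{lem:regsecond} for $\Delta$ at $j=p-1=b-\mu^{*}_i-1$ (slightly different from the index you guessed, but within your ``nearby value'' hedge; no ceilings are needed). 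You should also dispose explicitly of the degenerate case $\mu^{*}_i=b$, where $p=0$: there no violation can occur since $1+b>\frac{b}{a}i$ for all $i\leqslant a$, so in any genuine violation the complement does contain a black bead. In sum, the paper's proof buys elementary self-containedness at the cost of a three-way case split and $\epsilon$-bookkeeping, while yours buys uniformity once the $\mu$/complementation dictionary is set up; as submitted, however, yours is a plan rather than a proof, since the decisive inequality chain is asserted to ``work out'' rather than derived.
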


\begin{proof}
By Lemma~{\ref{prop:doubledual}}, it is enough to show  one
direction.

Given a configuration $\Delta$ in $\config(a,b)$ characterizing by
the sequence $\{x_0,\cdots,x_{b-1}\}$, we need to show
$\{y_0,\cdots,y_{a-1}\}$, which characterizes the dual
configuration $\Delta^{*}$,  satisfies $\reg(b,a)$. That means for
all $0\leqslant j \leqslant a-1, 1\leqslant t \leqslant
1+\floor{\frac{a}{2}}$, the following inequality holds.
\begin{equation}\label{eq:dualproof}
\frac{b}{a}t-1<y_j+y_{j+1}+\cdots+y_{j+t-1}<\frac{b}{a}t+1.
\end{equation}

Without loss of generality, we can prove the above inequalities for $j=1$.  Denote the number of red
beads between $B^{*}_1$ and $B_{t+1}^{*}$ by $\tau$. In
other words, $\tau=y_1+\cdots+y_{t}$. We
will prove the lemma by considering three different cases of $\tau$.\\

{\bf Case 1: $\tau=0$}.~ In this case there is no red bead between
$B^{*}_1$ and $B^{*}_{t+1}$. That means there is no black bead between
$R_1$ and $R_{t+1}$ in $\Delta$, which implies the red beads with
labelling from $R_1$ to $R_{t+1}$ are all falling  between $B_i$ and $B_{i+1}$
for some $i$. Thus $t+1<x_i$ for some $i$, from which we have
\begin{eqnarray*}
 t+1 < x_i\ \mbox{for some} \ i  & \Rightarrow & t+1 <1+\frac{a}{b} \\
    & \Rightarrow & t<\frac{a}{b} \\
    & \Rightarrow & \frac{b}{a}t-1<y_1+y_{2}+\cdots+y_{t}=0<\frac{b}{a}t+1,
\end{eqnarray*}
where the first line comes from the regularity of
$\{x_0,\cdots,x_{b-1}\}$.\\

{\bf Case 2: $\tau=1$}.~ Similar to the proof in case 1, we know
there is only one black bead between $R_1$ and $R_{t+1}$ in $\Delta$,
which means $t+1 < x_i+x_{i+1}$ for some $i$. Thus we have:
\begin{eqnarray*}
 t+1 < x_i+x_{i+1}\ \mbox{for some} \ i & \Rightarrow & t+1 <1+2\frac{a}{b} \\
     & \Rightarrow & \frac{b}{a}t-1<1 \\
    & \Rightarrow &
    \frac{b}{a}t-1<y_1+y_{2}+\cdots+y_{t}=1<\frac{b}{a}t+1.
\end{eqnarray*}

{\bf Case 3: $\tau \ge 2$}.~ Now we can assume that there are
$k+1$ red beads between $B^{*}_1$ and $B^{*}_{t+1}$ for some
$k\geqslant 1$. In other words, there are $k+1$ black beads
between $R_1$ and $R_{t+1}$ in $\Delta$. Assume these black beads are
labelled from $B_i$ to $B_{i+k}$. Then we have the following
fragment in the sequence representing $\Delta$:
$$R_1,\underbrace{R,\cdots,R}_{\epsilon_1}, B_{i}, \cdots ,
B_{i+k}, \underbrace{R,\cdots,R}_{\epsilon_2},R_{t+1}$$

where $0\leqslant \epsilon_1 \leqslant x_{i-1}-1$ and $0\leqslant
\epsilon_2 \leqslant x_{i+k}-1$.\\

Since there are $t-1$ red beads between $R_1$ and $R_{t+1}$, we have:
\begin{equation}{\label{eq:redproof}}
(\epsilon_1+1)+(\epsilon_2+1)+(x_{i}+\cdots+x_{i+k-1})=(t-1)+2=t+1.
\end{equation}

From the regularity of $\Delta$,
$x_i+\cdots+x_{i+k-1}> k\frac{a}{b}-1$. Putting it into
Equation~(\ref{eq:redproof}) and noting that
$\epsilon_1\geqslant0$ and $\epsilon_2\geqslant0$, we have:
\begin{equation}\label{eq:t1}
t+1>k\frac{a}{b}-1+2.
\end{equation}

On the other hand, from $\epsilon_1\leqslant x_{i-1}-1$ and
$\epsilon_2\leqslant x_{i+k}-1$, Equation~(\ref{eq:redproof})
implies:
\begin{equation}\label{eq:t2}
t+1\leqslant x_{i-1}+x_{i}+\cdots+x_{i+k}< \frac{a}{b}(k+2)+1.
\end{equation}

Put (\ref{eq:t1}) and (\ref{eq:t2}) together, we have:
\begin{eqnarray*}
& k\frac{a}{b}-1+2<t+1< \frac{a}{b}(k+2)+1\\
\Rightarrow & k\frac{a}{b}<t<\frac{a}{b}(k+2) \\
\Rightarrow & t\frac{a}{b}-2<k<t\frac{a}{b} \\
\Rightarrow & t\frac{a}{b}-1<k+1<t\frac{a}{b}+1,
\end{eqnarray*}

which completes the proof of the last case in the lemma since
$y_1+y_{2}+\cdots+y_{t}=k+1$.
\end{proof}

To summarize, in this section we prove $*$, the dual operator,
maps each configuration $\Delta$ in $\config(a,b)$ to a
configuration $\Delta^{*}$ in $\config(b,a)$. Furthermore, this
map is an onto bijection and preserves the regularity.

\section{Construction and symmetry}\label{sec:properites}

In this section, two aspects of regular configuration are
discussed. The first one is the existence of a unique regular
configuration in $\config(a,b)$. The other is the symmetry group
of regular configurations.

\subsection{Construction}\label{sect:findingconf}
An algorithm to construct a regular configuration in
$\config(a,b)$ is proposed in this subsection. In fact, there
exists another well known algorithm in computer graphics for this
problem~\cite{HP}. But the one presented here is more convenient
for our propose. Furthermore, we will prove such regular
configuration in $\config(a,b)$ is unique.

The input of the algorithm is $\delta$: $(a,X;~b,Y)$. Here $X$
($Y$) is a fragment of necklaces; $a$ and $b$ are respectively the
number of $X$ fragments and $Y$ fragments. The output is $\Delta$,
a necklace formed by $a$ $X$ fragments and $b$ $Y$ fragments. As
in the previous sections, the output configuration $\Delta$ also
will be represented by its characteristic sequence. \vspace{0.2in}

 \fbox{
\begin{minipage}{5in}

\begin{center}
 {\center FindRegular~$(a,X;~b,Y)$\\}

    \begin{itemize}
       \item if ($a<b$), return FindRegular$(b,Y;~a,X)$;
       \item else do:
          \begin{itemize}
             \item compute $t,k$ such that $a=bt+k$ where $1\leq t,0\leq k < b$.
             \item build a new fragment $Z=\{Y,X,...,X\}$ with $t$ fragments of $X$.
                \begin{itemize}
                       \item if ($k \neq 0$) return FindRegular$(b,Z; k,X)$;
                       \vspace{1mm}
                       \item  else return a necklace formed by $b$ fragments of
                       $Z$.
                \end{itemize}
          \end{itemize}
    \end{itemize}

{\bf Algorithm I: Find a regular configuration}
\end{center}

\end{minipage}
} \vspace{0.2in}

When the fragment $X$ is only a red bead and $Y$ is a black bead,
the input parameters $(a,X;~b,Y)$ can be simplified as $(a,b)$. In
this case, we will prove that the output configuration, which
belongs to $\config(a,b)$, is regular. To this end, we consider some special cases.

\begin{proposition}\label{prop:outputExample}
 For the input $\delta_1=(bt,b)$, the output
configuration $\Delta_1$ is given by the characteristic sequences
    $\{t,\cdots,t\}$. Similarly, for $\delta_2=(bt+1,b)$, $\Delta_2$
    is characterized by $\{t+1,t,\cdots,t\}$. In both cases, the
    output configurations are regular.
\end{proposition}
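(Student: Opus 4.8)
The plan is to trace the recursion of Algorithm~I on the two stated inputs and identify the output sequence explicitly, then invoke Propositions~\ref{prop:regbase} and~\ref{prop:regbaseTwo} (equivalently, Lemma~\ref{lem:regsecond}) to conclude regularity. For the first input $\delta_1 = (bt, b)$, with the convention $X$ a single red bead and $Y$ a single black bead, I would check whether $bt < b$. Since we assume $a > 0$ and $b > 0$, and the interesting case is $t \geq 1$, we have $bt \geq b$, so the algorithm proceeds to the ``else'' branch (the edge case $t = 0$ gives the all-black necklace, which I would dispatch separately or note is excluded). It computes the division $bt = b \cdot t + 0$, so the quotient is $t$ and the remainder $k = 0$. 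It builds $Z = \{Y, X, \ldots, X\}$ with $t$ copies of $X$, i.e.\ $Z$ is one black bead followed by $t$ red beads, and since $k = 0$ it returns the necklace formed by $b$ copies of $Z$. Reading off the characteristic sequence: each $Z$ contributes one black bead with exactly $t$ red beads after it, so the sequence is $\{t, \ldots, t\}$ ($b$ entries). By Proposition~\ref{prop:regbase} this is the regular configuration; alternatively, one checks $1 + \mu_j = 1 + tj > \frac{a}{b} j = tj$ directly and applies Lemma~\ref{lem:regsecond}.

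For the second input $\delta_2 = (bt+1, b)$, I would again check $bt + 1 < b$: for $t \geq 1$ this fails, and for $t = 0$ it also fails when $b = 1$ but would recurse when $b \geq 2$ — so here I need to be slightly careful with small cases, but in the main regime $t \geq 1$ the algorithm enters the ``else'' branch. The division gives $bt + 1 = b \cdot t + 1$, so quotient $t$ and remainder $k = 1 \neq 0$. It builds $Z = \{Y, X, \ldots, X\}$ with $t$ copies of $X$ (one black bead, $t$ red beads) and then returns $\findregular(b, Z; 1, X)$. In this recursive call the roles are: $b$ copies of the fragment $Z$ and $1$ copy of the fragment $X$ (a single red bead). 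Since $b \geq 1 = $ the other count, we do not swap; we compute $b = 1 \cdot b + 0$ — wait, here the first parameter is $b$ and the second is $1$, so the division is $b = 1 \cdot b + 0$ only if we are dividing $b$ by $1$; more precisely with $a' = b$, $b' = 1$ we get $t' = b$, $k' = 0$. Then $Z' = \{X, Z, \ldots, Z\}$ with $b$ copies of $Z$, and since $k' = 0$ we return the necklace formed by $1$ copy of $Z'$, namely $Z'$ itself. So the final necklace is: one red bead $X$, followed by $b$ copies of $Z = \{B, \underbrace{R,\ldots,R}_{t}\}$. Concatenating, we get a leading red bead, then $B_0$ with $t$ reds after it, then $B_1$ with $t$ reds, \ldots, but the leading red bead wraps around cyclically to sit after the last black bead, giving that block $t + 1$ red beads. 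Reading the characteristic sequence starting from $B_0$: the block after $B_{b-1}$ has $t + 1$ reds (the $t$ from its own $Z$ plus the wrapped leading red), and all other blocks have $t$ reds, so the sequence is $\{t, \ldots, t, t+1\} \sim \{t+1, t, \ldots, t\}$. By Proposition~\ref{prop:regbaseTwo} this is regular; alternatively Lemma~\ref{lem:regsecond} applies since $1 + \mu_j = 1 + tj > \frac{bt+1}{b} j$ reduces to $b > j$, true for $j \leq b$ except $j = b$ where it is $1 + tb > tb + 1$, which fails — so I would double-check the index range and the $\mu_b$ case, likely handled because $\mu_b = a$ makes the inequality $1 + a > a$, which holds.

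The main obstacle I anticipate is bookkeeping the cyclic structure of the fragment concatenation correctly — in particular, making sure that when we unwind the recursion the ``extra'' fragments ($X$'s appended inside $Z$, or the single $X$ in the $k = 1$ case) end up contributing to the block counts $x_i$ in the way I claimed, rather than off by one or attached to the wrong black bead. The cleanest way to discharge this cleanly is probably not to track the full necklace but to set up a small invariant describing, for each recursive call $\findregular(a, X; b, Y)$, what the characteristic sequence of the returned necklace is in terms of the ``sub-sequences'' encoded by the current $X$ and $Y$ fragments; then the two claimed outputs fall out by one or two unfoldings. A secondary minor obstacle is the boundary cases $t = 0$ or $b = 1$, which should be mentioned and either excluded (consistent with the standing assumption $a, b > 0$ and the spirit of the construction) or verified by hand. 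Once the output sequences $\{t,\ldots,t\}$ and $\{t+1,t,\ldots,t\}$ are pinned down, the regularity claim is immediate from the earlier propositions or a one-line application of Lemma~\ref{lem:regsecond}.
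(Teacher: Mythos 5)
Your proposal is correct and follows essentially the same route as the paper: the paper's own proof simply states that the outputs can be ``verified directly'' to be $\{t,\ldots,t\}$ and $\{t+1,t,\ldots,t\}$ and then cites Propositions~\ref{prop:regbase} and~\ref{prop:regbaseTwo} for regularity, which is exactly what you do (with the algorithm trace, the cyclic wrap-around of the leading red bead, and the $j=b$ boundary check of Lemma~\ref{lem:regsecond} spelled out in more detail than the paper bothers to).
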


\begin{proof}
For $i=1,2$, it can be verified directly that $\Delta_i$ is the
output configuration for $\delta_i$. Furthermore $\Delta_i$ is
regular from Proposition~\ref{prop:regbase}
and~\ref{prop:regbaseTwo}.
\end{proof}

\begin{proposition}\label{prop:alginduct}
Let $a=tb+r$ for nonnegative integers $t$ and $r$. If
$\{x_0,\cdots,x_{b-1}\}$ characterizes the output configuration of
FindRegular(a,b), then $\{x'_0,\cdots,x'_{b-1}\}$, where
$x'_j=x_j-t$, characterizes the output configuration of
FindRegular(r,b).
\end{proposition}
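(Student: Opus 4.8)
\textbf{Proof proposal for Proposition~\ref{prop:alginduct}.}

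The plan is to trace through the recursive structure of the algorithm and compare the recursion tree for the input $(a,b)$ with that for $(r,b)$, where $a = tb+r$ with $0 \leq r < b$. The key observation is that, since $t \geq 1$ whenever $a \geq b$, the very first non-trivial step of $\findregular(a,b)$ computes exactly this $t$ and $r$ (in the notation of the algorithm box, $t$ and $k=r$), builds the composite fragment $Z = \{Y, X, \dots, X\}$ with $t$ copies of $X$, and then either recurses into $\findregular(b, Z;\ r, X)$ if $r \neq 0$, or returns the necklace formed by $b$ copies of $Z$ if $r = 0$. By contrast, $\findregular(r,b)$ first swaps (since $r < b$) to $\findregular(b, Y;\ r, X)$, then proceeds. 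So the two computations differ only in that one works with the fragment $Z$ where the other works with the fragment $Y$, all other parameters being identical; in particular they make exactly the same sequence of recursive calls with the same numeric arguments.

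First I would set up the comparison cleanly: I would state a lemma (or simply argue inline) that for any fragment $W$, the necklace produced by $\findregular(b, W;\ r, X)$ has the form obtained from the necklace produced by $\findregular(b, Y;\ r, X)$ by replacing each occurrence of the $W$-fragment's ``extra'' content appropriately. The cleanest way to make this precise is at the level of characteristic sequences. Running $\findregular(b, Y;\ r, X)$ and reading off, between consecutive $Y$-fragments, the number of $X$-fragments inserted, yields exactly the characteristic sequence $\{x'_0, \dots, x'_{b-1}\}$ of $\findregular(r,b)$. Running $\findregular(b, Z;\ r, X)$ with $Z = \{Y, X^t\}$ does the identical bookkeeping — the insertion pattern of $X$-fragments relative to the $Z$-fragments is the same — but now each $Z$-fragment already contains $t$ copies of $X$ bundled with its $Y$. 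Hence between consecutive $B_j$'s (the black beads, i.e. the $Y = B$ fragments) in the output of $\findregular(a,b)$ one finds the $t$ red beads packed inside $Z$ together with the $x'_j$ red beads inserted by the recursion, giving $x_j = x'_j + t$. The case $r = 0$ is handled separately and directly: $\findregular(bt, b)$ returns $b$ copies of $Z = \{B, R^t\}$, i.e. characteristic sequence $\{t,\dots,t\}$, matching $x'_j = 0$ and $x_j = t$ (this is also Proposition~\ref{prop:outputExample}).

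The main obstacle I anticipate is purely notational/organizational rather than mathematical: making rigorous the claim that ``the two recursive computations make the same calls'' requires an induction on the recursion depth (equivalently, on $b$, which strictly decreases after the swap, or on $a$), and one must be careful that the fragment $Z$ is treated as an opaque block throughout the deeper recursion — the algorithm never inspects the internal structure of a fragment, only counts copies of it, which is exactly why the substitution $Y \mapsto Z$ propagates cleanly. I would therefore phrase the induction as: for all pairs $(a,b)$ with $a = tb + r$, $0 \leq r < b$, and for all fragments $X, Y$, the output of $\findregular(a, X;\ b, Y)$ equals the output of $\findregular(r, X;\ b, Y')$ with $Y'$ the fragment $\{Y, X^t\}$, read in the natural way; specializing $X$ to a red bead and $Y$ to a black bead and translating to characteristic sequences then gives $x_j = x'_j + t$, which is the claim. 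The base case is $r = 0$, already dispatched above, and the inductive step is the single unfolding of the recursion described in the previous paragraph.
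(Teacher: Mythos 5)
Your proposal is correct and follows essentially the same route as the paper, whose proof is just the one-line remark that the claim ``can be verified by comparing the outputs of the algorithm for inputs $(bt+r,b)$ and $(r,b)$''; your unfolding of the first recursive step (the substitution $Y\mapsto Z=\{Y,X^t\}$, with fragments treated as opaque blocks thereafter) is precisely the comparison the paper has in mind, carried out in detail.
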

\begin{proof}
The proposition can be verified by comparing the outputs of the
algorithm for inputs $(bt+r,b)$ and $(r,b)$.
\end{proof}

\begin{proposition}\label{prop:algdual}
If $\Delta$ is the output of FindRegular(a,b), then $\Delta^{*}$,
the dual configuration of $\Delta$, is the output of
FindRegular(b,a).
\end{proposition}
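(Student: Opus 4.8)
If $\Delta$ is the output of FindRegular$(a,b)$, then $\Delta^{*}$, the dual configuration of $\Delta$, is the output of FindRegular$(b,a)$.

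The plan is to argue by induction on $n = a+b$, mirroring the recursive structure of Algorithm~I. The base cases are exactly those isolated in Proposition~\ref{prop:outputExample}: when $a = bt$ or $a = bt+1$ (and the symmetric situations with the roles of $a$ and $b$ reversed), the output configurations have the explicit characteristic sequences $\{t,\dots,t\}$ and $\{t+1,t,\dots,t\}$, and one checks directly from the definition of the dual that switching colours sends these to the corresponding explicit regular sequences in $\config(b,a)$, which by Proposition~\ref{prop:outputExample} (applied to the pair $(b,a)$) are precisely the outputs of FindRegular$(b,a)$.

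For the inductive step, write $a = bt + r$ with $1 \le t$ and $0 < r < b$ (the cases $r=0$ and $r=1$ being base cases). By Proposition~\ref{prop:alginduct}, the output of FindRegular$(a,b)$ is obtained from the output of FindRegular$(r,b)$ by adding $t$ to each entry of the characteristic sequence; equivalently, at the level of necklaces, FindRegular$(a,b)$ inserts $t$ extra red beads into each of the $b$ gaps between consecutive black beads of the configuration produced by FindRegular$(r,b)$. Now I would trace what this insertion does on the dual side. Adding $t$ red beads into every black-to-black gap of $\Delta$ corresponds, after switching colours, to adding $t$ \emph{black} beads into every red-to-red gap of $\Delta^{*}$; in the characteristic-sequence language of $\config(b,a)$ this is not a uniform shift of the $y_i$, but it is the transformation dual to the one in Proposition~\ref{prop:alginduct}, i.e. it is exactly the relation between FindRegular$(b,r)$'s output and FindRegular$(b,a)$'s output when we regard $FindRegular(b,a)$ as recursing via $a = rt' + \dots$. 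The cleanest way to make this precise is to observe that both FindRegular and the dual operator are "symmetric in their two arguments'' in compatible ways: the first step of Algorithm~I swaps the arguments when $a<b$, and the dual swaps $\config(a,b)$ with $\config(b,a)$, so applying $*$ commutes with that swap. Combining this with the inductive hypothesis applied to the smaller pair $(r,b)$ (whose sum $r+b < a+b$) and with Proposition~\ref{prop:alginduct} on both sides yields the claim.

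The main obstacle I expect is the second-to-last point: verifying that the "insert $t$ beads in every gap'' operation on $\Delta$ corresponds, under $*$, to the operation that turns FindRegular$(b,r)$ into FindRegular$(b,a)$ — in other words, checking that the recursion tree of FindRegular$(b,a)$ really is the "$*$-image'' of the recursion tree of FindRegular$(a,b)$ step by step. This is bookkeeping rather than a deep difficulty, but it must be done carefully because the characteristic sequence of a dual configuration records black-beads-between-red-beads, not red-beads-between-black-beads, so the uniform shift on one side becomes a genuinely different (though structurally parallel) uniform shift on the other. Once that correspondence is nailed down, the induction closes immediately, and as a corollary (combined with Lemma~\ref{lem:dual} and Proposition~\ref{prop:outputExample}) one gets that FindRegular$(a,b)$ always outputs a regular configuration.
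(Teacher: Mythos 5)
Your parenthetical remark --- that Algorithm~I's first line swaps its two arguments and that the algorithm is otherwise indifferent to what the fragments $X$ and $Y$ actually are --- is in fact the entire proof, and it is exactly the one the paper gives: switching the colours of the beads can be done either before or after running the algorithm, and switching them before is the same as presenting the input as $(b,a)$. No induction is needed. The algorithm only ever manipulates abstract fragments, so FindRegular$(a,R;\,b,B)$ and FindRegular$(a,B;\,b,R)$ run through identical recursion trees with the roles of red and black interchanged, and the latter call is what FindRegular$(b,a)$ reduces to after its initial swap.

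The inductive scaffolding you build around this observation, however, has a genuine gap at its central step. You invoke ``Proposition~\ref{prop:alginduct} on both sides,'' but that proposition reduces the \emph{first} argument modulo the \emph{second} (it relates FindRegular$(a,b)$ to FindRegular$(r,b)$ with $a\equiv r \pmod b$); on the dual side you would need a statement relating FindRegular$(b,a)$ to FindRegular$(b,r)$, i.e.\ a reduction of the \emph{second} argument modulo the \emph{first}, and as you yourself note the corresponding operation on characteristic sequences in $\config(b,a)$ is not the uniform shift $y_i\mapsto y_i - t$ that Proposition~\ref{prop:alginduct} describes. Identifying what that operation is (replacing each red bead of the $\config(b,r)$ output by the fragment $R\,B^t$, which is precisely what the recursion of FindRegular$(b,a)$ does after its first unfolding) is not optional bookkeeping --- it \emph{is} the content of the proposition, and deferring it leaves the induction unclosed. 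There is also a small slip in the base case: for $a=bt$ the dual configuration lies in $\config(b,bt)$, and Proposition~\ref{prop:outputExample} applied to the pair $(b,a)$ covers $b=at'$ or $b=at'+1$, not this situation; you need a direct computation (the output $(R\,B^t)^b$ of FindRegular$(b,a)$), which is easy but is not the cited proposition. The short fix is to discard the induction and promote your symmetry remark to the whole argument.
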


\begin{proof}
$\Delta^{*}$ can be obtained from $\Delta$ by switching the color
of all beads. This process can be done either before running the
algorithm to get $\Delta$ or after running it. In the first case,
it is the same to say the input is $(b,a)$.
\end{proof}

With these preparations we can prove the following theorem, which
is the main result of this subsection.

\begin{theorem}\label{thm:regFind}
Given any integer pair $(a,b)$ as the input, the output of the
algorithm FindRegular is always a regular configuration.
\end{theorem}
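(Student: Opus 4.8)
The natural strategy is induction on $n = a+b$ (equivalently, on the number of recursive calls the algorithm makes), exploiting the recursion structure of FindRegular together with the duality and ``strip off a common quotient'' facts already established. First I would dispose of the base cases: by Proposition~\ref{prop:outputExample}, when $k=0$ (i.e.\ $b \mid a$) or when the remainder is $1$, the output is $\{t,\dots,t\}$ or $\{t+1,t,\dots,t\}$, which are regular by Propositions~\ref{prop:regbase} and~\ref{prop:regbaseTwo}. Since the algorithm first normalizes so that $a \geq b$ (and returning FindRegular$(b,Y;a,X)$ only swaps the roles of the two fragment types, which by Proposition~\ref{prop:algdual} corresponds to passing to the dual configuration, whose regularity is equivalent by Lemma~\ref{lem:dual}), I may assume $a = bt + k$ with $1 \leq t$ and $1 \leq k < b$, and the algorithm recurses on FindRegular$(b, Z; k, X)$ where $Z = \{Y, X, \dots, X\}$ contains $t$ copies of $X$.

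The key reduction is that running FindRegular$(b, Z; k, X)$ with the compound fragment $Z$ produces exactly the necklace obtained from the output of FindRegular$(b,k)$ (on single beads) by substituting $Z$ for each black bead and $X$ for each red bead; equivalently, at the level of characteristic sequences, if $\{z_0,\dots,z_{k-1}\}$ is the characteristic sequence of the output of FindRegular$(k,b)$ in $\config(k,b)$... — more cleanly, I would phrase it via Proposition~\ref{prop:alginduct}: the output $\{x_0,\dots,x_{b-1}\}$ of FindRegular$(a,b)$ and the output $\{x'_0,\dots,x'_{b-1}\}$ of FindRegular$(r,b)$ (with $r = a - tb = k$) are related by $x'_j = x_j - t$. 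By the induction hypothesis applied to FindRegular$(k,b)$ — which involves a strictly smaller parameter sum since $k < b \leq a$ — its output is regular, i.e.\ $\{x'_0,\dots,x'_{b-1}\}$ satisfies $\reg(k,b)$. Then Proposition~\ref{prop:indreg} (the ``inserting $t$ extra red beads everywhere preserves regularity'' statement), applied in the direction from $\config(r,b)$ back to $\config(a,b)$, upgrades this to: $\{x_0,\dots,x_{b-1\}}$ satisfies $\reg(a,b)$, which is precisely the assertion that the output of FindRegular$(a,b)$ is regular.

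The main obstacle is making the induction bookkeeping airtight: I must ensure that each recursive call strictly decreases a well-founded measure and that the three auxiliary propositions are being invoked with the hypotheses they actually require. In particular, Proposition~\ref{prop:alginduct} and Proposition~\ref{prop:indreg} are stated for inputs of the form $(a,b)$ on single beads, and I should confirm that the $k \geq 2$ branch of the algorithm (the one that recurses with a genuinely compound $Z$) indeed collapses, after unwinding, to a statement about FindRegular on single-bead inputs with parameters $(k,b)$ or $(b,k)$ — this is exactly where Proposition~\ref{prop:algdual} (handling the $a<b$ swap as dualization) and Proposition~\ref{prop:alginduct} (handling the quotient $t$) combine. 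Once the reduction ``output of FindRegular$(a,b)$ on single beads $=$ output of FindRegular$(k,b)$ shifted up by $t$, for $k = a \bmod b$'' is pinned down, the regularity follows formally from the induction hypothesis plus Proposition~\ref{prop:indreg}, with the base cases handled as above. I would also remark that uniqueness of the regular configuration in $\config(a,b)$, used implicitly when we speak of ``the'' regular configuration, is not needed for this theorem — only existence via the algorithm is asserted here.
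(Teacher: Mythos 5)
Your proposal is correct and follows essentially the same route as the paper: both arguments rest on the same four ingredients (Proposition~\ref{prop:outputExample} for the base case $a=bt$, Propositions~\ref{prop:alginduct} and~\ref{prop:indreg} to transfer regularity between the outputs for $(a,b)$ and $(a\bmod b,\,b)$, and Proposition~\ref{prop:algdual} with Lemma~\ref{lem:dual} to handle the swap). The only difference is bookkeeping — you induct on $a+b$ and use duality merely to normalize to $a\geq b$, whereas the paper takes a minimal counterexample with respect to $b$ and uses duality in the descent step $(r,b)\to(b,r)$ — and both versions are sound.
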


\begin{proof}
We will prove this theorem by induction on $b$.

{\bf Step 1: } The base case is $a=bt$, which also contains $b=1$.
In this case the theorem holds from
Proposition~\ref{prop:outputExample}.

{\bf Step 2: } Now let $(a,b)$ be an instance of the input such
that $b$ is smallest over all instances that the output is a
irregular configuration. From step 1, we can assume $a=tb+r$ for
some integer $t$ and $r$ where $0<r<b$. From
Proposition~\ref{prop:alginduct} and~\ref{prop:indreg} the output
for $(r,b)$ is also irregular. Furthermore, from
Proposition~\ref{prop:algdual} and Lemma~\ref{lem:dual}, the
output for $(b,r)$ is also irregular, which contradicts the
minimality of $b$.
\end{proof}

The above theorem implies the existence of a regular configuration
in $\config(a,b)$. Now we are going to show that such regular
configuration is in fact unique in $\config(a,b)$.

\begin{theorem}\label{thm:regularuniq}
There exists at most one regular configuration in $\config(a,b)$.
\end{theorem}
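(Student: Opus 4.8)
The plan is to show uniqueness by induction on $b$, mirroring the structure already used in Theorem~\ref{thm:regFind} and exploiting the two reduction principles available: Proposition~\ref{prop:indreg} (regular configurations in $\config(a,b)$ and $\config(r,b)$ correspond under subtracting $t$ from each entry, where $a=tb+r$) and Lemma~\ref{lem:dual} (duality preserves regularity). First I would dispose of the base case $b=1$: here $\config(a,1)$ contains a single configuration characterized by $\{a\}$, so uniqueness is trivial. I would also record the base case $a=bt$: by Lemma~\ref{lem:regsecond}, a regular sequence must satisfy $1+\mu_j>tj$ for all $j$ and, summing the complementary inequality via Equation~(\ref{eq:total}), also an upper bound; together these force $x_i+\cdots+x_{i+j-1}=tj$ for every window, hence $x_i=t$ for all $i$, giving the unique sequence $\{t,\cdots,t\}$.

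For the inductive step, suppose $b\ge 2$ and that uniqueness holds for all smaller second parameter (and, by symmetry of the statement in $a,b$ combined with Lemma~\ref{lem:dual}, I may also assume it whenever the relevant reduced instance has smaller second coordinate). Write $a=tb+r$ with $0\le r<b$; the case $r=0$ is handled above, so assume $0<r<b$. Given two regular configurations $\Delta,\Delta'$ in $\config(a,b)$ with characteristic sequences $\{x_i\}$, $\{x_i'\}$, Proposition~\ref{prop:indreg} tells me that $\{x_i-t\}$ and $\{x_i'-t\}$ are regular sequences for $\config(r,b)$. Now I apply duality: by Lemma~\ref{lem:dual} the dual configurations lie in $\config(b,r)$ and are regular there. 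Since $r<b$, the inductive hypothesis applies to $\config(b,r)$ and forces the two duals to be the same configuration, i.e.\ their characteristic sequences are equivalent under $\sim$. Applying the dual operator again and using Proposition~\ref{prop:doubledual} (together with the fact, noted after Lemma~\ref{lem:dual}, that $*$ is a bijection on configurations) shows $\{x_i-t\}\sim\{x_i'-t\}$, and adding $t$ back to each coordinate gives $\{x_i\}\sim\{x_i'\}$, so $\Delta=\Delta'$ as configurations.

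The step I expect to need the most care is the interface between the sequence-level statements and the configuration-level statement: Proposition~\ref{prop:indreg}, Lemma~\ref{lem:dual} and Proposition~\ref{prop:doubledual} are phrased for configurations or for characteristic sequences, and I must make sure that "two regular configurations coincide" really does propagate through the chain $\config(a,b)\to\config(r,b)\to\config(b,r)\to\config(r,b)\to\config(a,b)$ without a hidden ambiguity introduced by the choice of representative sequence in an $\sim$-class. The cleanest way to avoid this is to argue entirely at the level of configurations (equivalence classes) and invoke that each of the three operations—"subtract $t$ from every entry", "take the dual", "add $t$ back"—is a well-defined bijection on the relevant configuration sets that carries regular configurations to regular configurations; then the displayed chain is a composition of bijections under which the image of the unique regular configuration of $\config(b,r)$ is forced. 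A secondary point worth stating explicitly is the reduction ensuring the induction is well-founded: either we decrease $b$ directly (the $r=0$ case is a base case) or we pass to $\config(b,r)$ with $r<b$, so the second parameter strictly decreases, which is what licenses the inductive hypothesis.
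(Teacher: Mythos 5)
Your proposal is correct and follows essentially the same route as the paper: induct on $b$ (the paper phrases it as a minimal counterexample), handle $a=bt$ and $b=1$ as base cases, reduce $\config(a,b)$ to $\config(r,b)$ via Proposition~\ref{prop:indreg}, and then dualize to $\config(b,r)$ via Lemma~\ref{lem:dual} to make the second parameter strictly decrease. Your extra care about the $r=0$ case and about the three reductions being well-defined bijections on $\sim$-classes is a slight tightening of the paper's argument, not a different approach.
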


\begin{proof}
The theorem holds for $b=0$ and $b=1$ since in both cases there is
essentially one configuration and it is regular by the definition.

Now assume the theorem fails for some $b$ and let $b$ be the
smallest one such that $\config(a,b)$ contains two different
regular configurations $\Delta$, $\Delta'$ for some $a$, where
$\Delta$ and $\Delta'$ are characterized respectively by two
non-equivalent sequences $x_0,\cdots,x_{d-1}$ and
$z_0,\cdots,z_{d-1}$. Now we have $a=bt+r$ for some $t\in
\mathbb{N}$ and integer $0\leqslant r<b$. From
proposition~{\ref{prop:indreg}} we know $x'_0,\cdots,x'_{d-1}$ and
$z'_0,\cdots,z'_{d-1}$ represent two different regular
configurations in $\config(r,b)$. That implies $\config(b,r)$
contains two different regular configurations from
Lemma~\ref{lem:dual}, a contradiction to the minimality of $b$.
\end{proof}

We summarize Theorem~\ref{thm:regFind} and~\ref{thm:regularuniq}
as the following one.

\begin{theorem}
There exists a unique regular configuration in $\config(a,b)$,
which can be constructed by Algorithm I.
\end{theorem}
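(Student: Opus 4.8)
The plan is to obtain this final theorem as an immediate corollary of the two results it bundles together, namely Theorem~\ref{thm:regFind} (the output of Algorithm I is always regular) and Theorem~\ref{thm:regularuniq} (there is at most one regular configuration in $\config(a,b)$). First I would invoke Theorem~\ref{thm:regFind} with the specialized input $(a,b)$, i.e.\ with $X$ a single red bead and $Y$ a single black bead, to produce a configuration $\Delta \in \config(a,b)$ that is regular; this already establishes existence. Then I would invoke Theorem~\ref{thm:regularuniq} to conclude that any regular configuration in $\config(a,b)$ must be equivalent (as a cyclic sequence, hence as a necklace) to this $\Delta$. Combining the two gives existence and uniqueness simultaneously, and the construction is precisely the run of Algorithm I on $(a,b)$.

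The one subtlety worth spelling out is the bookkeeping around the trivial cases. The running hypothesis from Section~\ref{sec:regular} onward is $a>0$ and $b>0$, but the theorem as stated refers to $\config(a,b)$ without that caveat, so I would add a sentence handling $a=0$ or $b=0$ separately: in that case $\config(a,b)$ has exactly one element (the all-black or all-red necklace), which is vacuously regular, and Algorithm I returns it after at most one swap. For $a,b>0$ everything goes through as above. I do not anticipate any real obstacle here — the genuine mathematical content (the duality lemma, the inductive correctness of the algorithm, the minimal-counterexample uniqueness argument) has already been discharged in Lemma~\ref{lem:dual}, Theorem~\ref{thm:regFind}, and Theorem~\ref{thm:regularuniq}. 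The only thing to be careful about is not to circularly reuse the ``unique regular configuration'' claim that was flagged as forward-referenced back in Section~\ref{sec:regular}; since by this point in the paper that claim has been proved, the logic is clean.

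So concretely the proof is three lines: (1) reduce to $a,b>0$, disposing of the degenerate case; (2) cite Theorem~\ref{thm:regFind} for existence via the algorithm; (3) cite Theorem~\ref{thm:regularuniq} for uniqueness. If the referee wanted more, I could remark that $\Delta$ and $\Delta^{*}$ being simultaneously regular (Lemma~\ref{lem:dual}) and simultaneously algorithmic outputs (Proposition~\ref{prop:algdual}) makes the whole picture symmetric under $(a,b)\leftrightarrow(b,a)$, which is a sanity check on the statement but not needed for the proof itself.
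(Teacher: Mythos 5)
Your proposal matches the paper exactly: the paper states this theorem as a direct summary of Theorem~\ref{thm:regFind} (existence via Algorithm I) and Theorem~\ref{thm:regularuniq} (uniqueness), with no further argument. Your extra remarks on the degenerate cases $a=0$ or $b=0$ and on the $(a,b)\leftrightarrow(b,a)$ symmetry are harmless additions but not part of the paper's reasoning.
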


\subsection{The symmetry group}\label{sec:symmtry}

In this subsection we assume the necklaces in  $\config(a,b)$ is
given with the labelling of the first type. That means the beads
are consecutively labelled from $0$ to $n-1$ with $n=a+b$. In this
case, we also call such a necklace, together with its labelling,
as a {\em labelled necklace}. Furthermore, the bead in a labelled
necklace $\Delta$ will be denoted by $t_i$.

Now two labelled necklaces are essentially same if we can
cyclically permutate one to the other. In other words, they
correspond to the same (unlabelled) necklace. More precisely, we
have the following definition.

\begin{definition}\label{def:rot}
Given an integer $k\in[0,n-1]$, the rotation $\phi_k$, which is
defined as $\phi_k(i)=i+k ~(\md~ n)$ for $i\in [0,n-1]$, is
 called a cyclic permutation of the labelled necklace
$\Delta$ in $\config(a,b)$ if $t_i$ and $t_{\phi_k(i)}$ have the same color for each $i$.
\end{definition}

All cyclically permutations of $\Delta$ form a group, called the
\emph{symmetry group} of $\Delta$ and denoted by $\rot(\Delta)$.
Notice that two labelled necklaces $\Delta$ and $\Delta'$ are the
same if $t_i=t'_i$ for each $i$.

\begin{proposition}\label{prop:labelConf}
There are exactly  $(a+b)/|\rot(\Delta)|$ different labelled
configurations associated with the same unlabelled configuration
$\Delta$.
\end{proposition}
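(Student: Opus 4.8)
The plan is to exhibit the set of labelled necklaces corresponding to a fixed unlabelled configuration $\Delta$ as an orbit under the cyclic group $\mathbb{Z}_n$ (with $n=a+b$) acting by rotations, and then apply the orbit–stabilizer theorem. First I would fix one labelled necklace representative, say the one in which the bead positions $0,1,\ldots,n-1$ are colored according to a chosen reading of $\Delta$; call this labelled necklace $\Delta_0$. The group $\mathbb{Z}_n$ acts on labelled necklaces: for $k\in[0,n-1]$, the rotation sending position $i$ to position $i+k\ (\md\ n)$ transforms $\Delta_0$ into another labelled necklace $k\cdot\Delta_0$, whose bead at position $i$ has the color of the bead at position $i-k$ of $\Delta_0$. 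The key observations are (i) every labelled necklace that gives the same unlabelled configuration as $\Delta$ arises this way — because two labelled necklaces represent the same unlabelled configuration precisely when one is a cyclic permutation of the other, which is exactly the content of the discussion preceding Definition~\ref{def:rot}; and (ii) the stabilizer of $\Delta_0$ under this action is, by Definition~\ref{def:rot}, exactly the set of $k$ for which $t_i$ and $t_{i+k}$ have the same color for all $i$, i.e.\ precisely $\rot(\Delta)$.

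Once those two points are in place, the conclusion is immediate: the number of distinct labelled necklaces in the orbit equals the index $[\mathbb{Z}_n : \rot(\Delta)] = n/|\rot(\Delta)| = (a+b)/|\rot(\Delta)|$ by the orbit–stabilizer theorem. So the proof reduces to carefully verifying (i) and (ii), together with the routine checks that the map $k\mapsto k\cdot\Delta_0$ is a genuine group action (associativity and that $0$ acts as the identity) and that $\rot(\Delta)$ is indeed a subgroup — the latter being essentially asserted already in the text ("All cyclically permutations of $\Delta$ form a group").

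For step (ii) I would spell out the identification: a rotation $\phi_k$ lies in $\rot(\Delta_0)$ iff applying it to $\Delta_0$ yields $\Delta_0$ back as a labelled necklace, and since two labelled necklaces are equal iff they agree color-by-color at every position (the remark after Definition~\ref{def:rot}), this holds iff $t_i$ and $t_{\phi_k(i)}$ share a color for every $i$, which is the defining condition for $\phi_k$ to be a cyclic permutation of $\Delta$. Hence $\rot(\Delta)$ as defined in the paper is literally the stabilizer subgroup. For step (i) I would argue that if $\Delta'$ is any labelled necklace representing the same unlabelled configuration, then by definition of "same unlabelled configuration" there is a cyclic shift carrying one labelling to the other, i.e.\ $\Delta' = k\cdot\Delta_0$ for some $k$, so $\Delta'$ is in the orbit; conversely every element of the orbit visibly represents $\Delta$ since rotating the labels does not change the underlying colored circle.

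The main obstacle — really the only non-mechanical point — is making precise and airtight the informal notion "two labelled necklaces represent the same unlabelled configuration," and checking that it coincides exactly with "lying in the same $\mathbb{Z}_n$-orbit." The paper has been somewhat casual about whether an unlabelled configuration is a colored circle up to rotation, or an equivalence class of characteristic sequences under $\sim$; I would pin down one definition (the colored-circle-up-to-rotation one is cleanest here) and note its equivalence with the sequence formulation via the Proposition in Section~\ref{sec:model}, so that "same unlabelled configuration $=$ related by a rotation of labels" becomes a definition-chase rather than a leap. Everything after that is the standard orbit–stabilizer count, and I would present it in one or two sentences.
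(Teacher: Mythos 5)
Your proposal is correct and follows essentially the same route as the paper: the paper also fixes a labelled representative $\Delta_0$, generates all $n$ rotated labellings $\Delta_0,\dots,\Delta_{n-1}$, observes that $\Delta_i=\Delta_j$ exactly when they differ by an element of $\rot(\Delta)$, and concludes the count is $(a+b)/|\rot(\Delta)|$ --- which is the orbit--stabilizer argument you make explicit. Your version is simply a more carefully formalized rendering of the same idea.
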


\begin{proof}
Given an unlabelled necklace $\Delta$, we can assign it
 with a first type labelling and denote such a labelled necklace as $\Delta_0$.
  From this we can obtain a set of labelled necklaces $\{\Delta_0,\cdots,\Delta_{n-1}\}$,
   via $t^i_0=t^0_i$. That is, we build $\Delta_i$ by assigning $0$ to $t_i$
   in $\Delta_0$. Now  $\Delta_i$ and $\Delta_j$ are the same if and only if there exists an element $\phi\in\rot(\Delta)$ such that $\Delta_i=\phi(\Delta_j)$. Since any labelled necklace obtaind from $\Delta$ must equal to some $\phi_i$, we know there are exactly  $(a+b)/|\rot(\Delta)|$
different labelled configurations corresponding to $\Delta$.
\end{proof}

Intuitively, a configuration $\Delta\in \config(a,b)$ is symmetric
if its symmetry group $\rot(\Delta)$ has the maximal size over all
possible labelled configurations. Note that this maximal number is
bounded above by $\gcd(a,b)$, the greatest common divisor of $a$
and $b$. This is because each element in $\rot(\Delta)$ induces
two cyclic permutations, one for a labelled necklace in
$\config(a,0)$ and the other for that in $\config(0,b)$.

\begin{definition}
A configuration $\Delta$ is symmetric if its symmetry group
$\rot(\Delta)$ has size $\gcd(a,b)$.
\end{definition}

\begin{proposition}\label{prop:inductionregular}
Let $a=tb+r$ for nonnegative integers $t$ and $r<b$. Given a
configuration $\Delta$ in $\config(a,b)$ characterized by $\{x_0,\cdots,x_{b-1}\}$, let $s$ be the smallest number of $x_i$.
Then $\{x'_0,\cdots,x'_{b-1}\}$, where $x'_j=x_j-s$, characterizes
a configuration $\Delta'$ in
$\config(a-sb,b)$. Furthermore, their symmetry groups have the same size. That is, $|\rot(\Delta)|=|\rot(\Delta')|$.
\end{proposition}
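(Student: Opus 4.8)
The plan is to show that the map induced by subtracting the constant $s$ from every term of the characteristic sequence is a color-preserving bijection between the beads of $\Delta$ and the beads of $\Delta'$ that is compatible with the cyclic structure, so that it carries the symmetry group of one necklace isomorphically onto that of the other. Concretely, $\Delta$ is obtained from $\Delta'$ by inserting exactly $s$ extra red beads into each of the $b$ gaps between consecutive black beads; conversely $\Delta'$ is obtained from $\Delta$ by deleting, from each gap, the first $s$ red beads following each black bead $B_i$. I would first make this insertion/deletion precise at the level of the labelled necklaces of the first type, and then argue that it commutes with rotations.

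First I would fix a first-type labelling on $\Delta$ and describe an explicit order-preserving injection $\iota$ from the positions of $\Delta'$ into the positions of $\Delta$: walk around both necklaces in parallel, sending each black bead of $\Delta'$ to the corresponding black bead of $\Delta$, and sending the $j$-th red bead in gap $i$ of $\Delta'$ (for $0 \le j < x'_i$) to the $(s+j)$-th red bead in gap $i$ of $\Delta$. The complement of the image of $\iota$ consists, in each gap, of exactly the $s$ red beads immediately following a black bead, so deleting those beads recovers $\Delta'$ from $\Delta$. Next I would check that $\iota$ is equivariant in the right sense: if $\phi_k \in \rot(\Delta)$, then $\phi_k$ permutes the black beads of $\Delta$ cyclically, hence permutes the $b$ gaps cyclically by some shift, say gap $i \mapsto$ gap $i+m$; since $\phi_k$ preserves colors and is a rigid rotation, it must map the whole block ``$B_i$ followed by $x_i$ red beads'' onto the whole block ``$B_{i+m}$ followed by $x_{i+m}$ red beads'' in an order-preserving way, which forces $x_i = x_{i+m}$ for all $i$ and forces $\phi_k$ to respect the ``first $s$ red beads of the gap'' versus ``the rest'' decomposition. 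Therefore $\phi_k$ restricts, via $\iota$, to a rotation $\psi$ of $\Delta'$, and one checks $\psi \in \rot(\Delta')$.

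Then I would run the same argument in the other direction: the insertion map (adding $s$ red beads after each black bead) sends a rotation $\psi \in \rot(\Delta')$ — which likewise shifts the gaps of $\Delta'$ cyclically and satisfies $x'_i = x'_{i+m}$ — to a rotation of $\Delta$ preserving colors, i.e.\ an element of $\rot(\Delta)$, and these two assignments are mutually inverse. Hence we obtain a bijection $\rot(\Delta) \to \rot(\Delta')$, giving $|\rot(\Delta)| = |\rot(\Delta')|$ as claimed. (A clean way to package the bookkeeping is via the characteristic-sequence description from Proposition~\ref{prop:labelConf} and the earlier discussion: an element of $\rot(\Delta)$ corresponds to an integer $m \in [0,b-1]$ with $\sigma^m\{x_0,\dots,x_{b-1}\} = \{x_0,\dots,x_{b-1}\}$ \emph{together with} the induced shift on red beads being consistent; and $\sigma^m$ fixes $\{x_i\}$ if and only if it fixes $\{x_i - s\} = \{x'_i\}$, since subtracting a constant from every entry commutes with $\sigma$. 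One still has to verify that the ``induced shift on red beads'' data matches up, which is exactly the equivariance check above.)

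The main obstacle I anticipate is precisely that last point: an element of $\rot(\Delta)$ is a rotation of the \emph{length-$n$} necklace, not merely a shift of the length-$b$ characteristic sequence, so I must be careful that a sequence shift $\sigma^m$ fixing $\{x_i\}$ actually lifts to a genuine color-preserving rotation of the bead necklace, and that this lifting is unique and behaves well under the insertion/deletion. The cleanest route is to note that a color-preserving rotation of $\Delta$ is determined by where it sends $B_0$ (it must go to some $B_m$ with $\{x_i\}$ $\sigma^m$-invariant), and similarly for $\Delta'$; since the invariance condition on $\{x_i\}$ and on $\{x'_i\} = \{x_i - s\}$ is literally the same condition, the two symmetry groups are in canonical bijection with the same set of admissible shifts $m$, and the only thing left to confirm is that each admissible $m$ does yield an honest rotation in each case — which it does, because within each gap the red beads are indistinguishable and can be mapped in order. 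This reduces the whole proposition to the elementary observation that $\sigma^m\{x_i\} = \{x_i\} \iff \sigma^m\{x_i - s\} = \{x_i - s\}$.
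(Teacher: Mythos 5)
Your argument is essentially the paper's own proof, which simply observes that inserting $s$ red beads into each gap extends every cyclic permutation of $\Delta'$ to one of $\Delta$ and that all elements of $\rot(\Delta)$ arise this way; you have just carried out the bookkeeping (the bijection with admissible shifts $m$ of the characteristic sequence, and the equivariance of the insertion/deletion maps) that the paper leaves implicit. The reduction to $\sigma^m\{x_i\}=\{x_i\}\iff\sigma^m\{x_i-s\}=\{x_i-s\}$ is correct and complete.
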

\begin{proof}
The proposition holds because inserting $s$ red beads for each consecutive pairs of black beads
will extend a cyclic permutation of $\Delta'$ to that of $\Delta$. And all cyclic permutation in $\rot(\Delta)$ can be obtained by this way.
\end{proof}

When $\Delta$ is the regular configuration in $\config(a,b)$, from the definition of regularity we
know that the $s$, which is defined in the above proposition, is equal to $t$. In this case,
the following corollary holds.

\begin{corollary}\label{cor:indsym}
Let $a=tb+r$ for nonnegative integers $t$ and $r<b$. Given the
regular configuration $\Delta=\{x_0,\cdots,x_{b-1}\}$ in
$\config(a,b)$, we can construct a configuration
$\Delta'$ in $\config(r,b)$ characterized by $\{x'_0,\cdots,x'_{b-1}\}$, where
$x'_j=x_j-t$. Then $\Delta'$ is regular and $|\rot(\Delta)|=|\rot(\Delta')|$.
\end{corollary}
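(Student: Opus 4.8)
The plan is to derive Corollary~\ref{cor:indsym} directly from Proposition~\ref{prop:inductionregular} together with the characterisation of regularity already at our disposal. First I would observe that when $\Delta$ is the regular configuration in $\config(a,b)$ with $a = tb+r$ and $0\leqslant r < b$, the smallest entry $s$ of its characteristic sequence $\{x_0,\dots,x_{b-1}\}$ is exactly $t$: indeed by Lemma~\ref{lem:regsecond} applied with $j=1$ we get $1+\mu_1 > a/b$, i.e. $\mu_1 \geqslant \lceil a/b\rceil - 1 \geqslant t$ (using $a/b \leqslant t+1$ since $r<b$ would only give strict inequality when $r>0$, and when $r=0$ the sequence is $\{t,\dots,t\}$ by Proposition~\ref{prop:regbase}); and since $\sum x_i = a = tb+r < (t+1)b$, not every $x_i$ can be $\geqslant t+1$, so $\mu_1 = t$, which is what ``$s=t$'' means. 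This is the remark already made in the paragraph preceding the corollary, so it can be stated briefly.

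With $s=t$ established, Proposition~\ref{prop:inductionregular} applies verbatim: the sequence $\{x'_0,\dots,x'_{b-1}\}$ with $x'_j = x_j - t$ has all entries nonnegative, sums to $a - tb = r$, hence characterises a configuration $\Delta'$ in $\config(a-tb,b) = \config(r,b)$, and moreover $|\rot(\Delta)| = |\rot(\Delta')|$. That disposes of the symmetry-group claim. It remains only to check that $\Delta'$ is regular, and this is precisely the content of Proposition~\ref{prop:indreg} (the ``only if'' direction, taking the regular configuration of $\config(a,b)$ and subtracting $t$ from each entry yields the regular configuration of $\config(r,b)$): since $\Delta$ is regular in $\config(a,b)$ and $x'_j = x_j - t$ with $a = tb+r$, Proposition~\ref{prop:indreg} tells us $\Delta'$ is regular in $\config(r,b)$. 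Combining the two conclusions gives exactly the statement of the corollary.

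I do not anticipate a serious obstacle here — the corollary is essentially a conjunction of two earlier results, Proposition~\ref{prop:inductionregular} and Proposition~\ref{prop:indreg}, glued by the single observation that the minimal entry $s$ of a regular characteristic sequence equals the quotient $t$. The only point that needs a line of care is that identification $s = t$; everything else is a citation. So the proof will be three or four sentences: invoke the $s=t$ remark, apply Proposition~\ref{prop:inductionregular} to get $\Delta'\in\config(r,b)$ with equal symmetry group, apply Proposition~\ref{prop:indreg} to get regularity of $\Delta'$, and conclude.
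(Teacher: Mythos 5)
Your proposal is correct and follows essentially the same route as the paper: the paper derives this corollary precisely by remarking that for the regular configuration the minimal entry $s$ of the characteristic sequence equals $t$, then invoking Proposition~\ref{prop:inductionregular} for the equality of symmetry groups and Proposition~\ref{prop:indreg} for the regularity of $\Delta'$. Your extra justification of $s=t$ via Lemma~\ref{lem:regsecond} (with the separate $r=0$ case) is sound and merely makes explicit what the paper leaves as a one-line remark.
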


Unlike regular configurations, symmetric configurations in
$\config(a,b)$ are not unique. For instance, Fig~\ref{fig:polygon}
and Fig~\ref{fig:sym} show two symmetric configurations in
$\config(6,4)$. See Fig~\ref{fig:nonsym} for  an example of
nonsymmetric configuration. Here we represent the configurations
in the polygon model for better visualization.
 Therefore, symmetry generally does not imply
regularity. But the converse is true, as the following theorem
implies.

\begin{figure}[h]
\centering %
\begin{minipage}[c]{2.5in}
\centering
\includegraphics[height=1.5in]{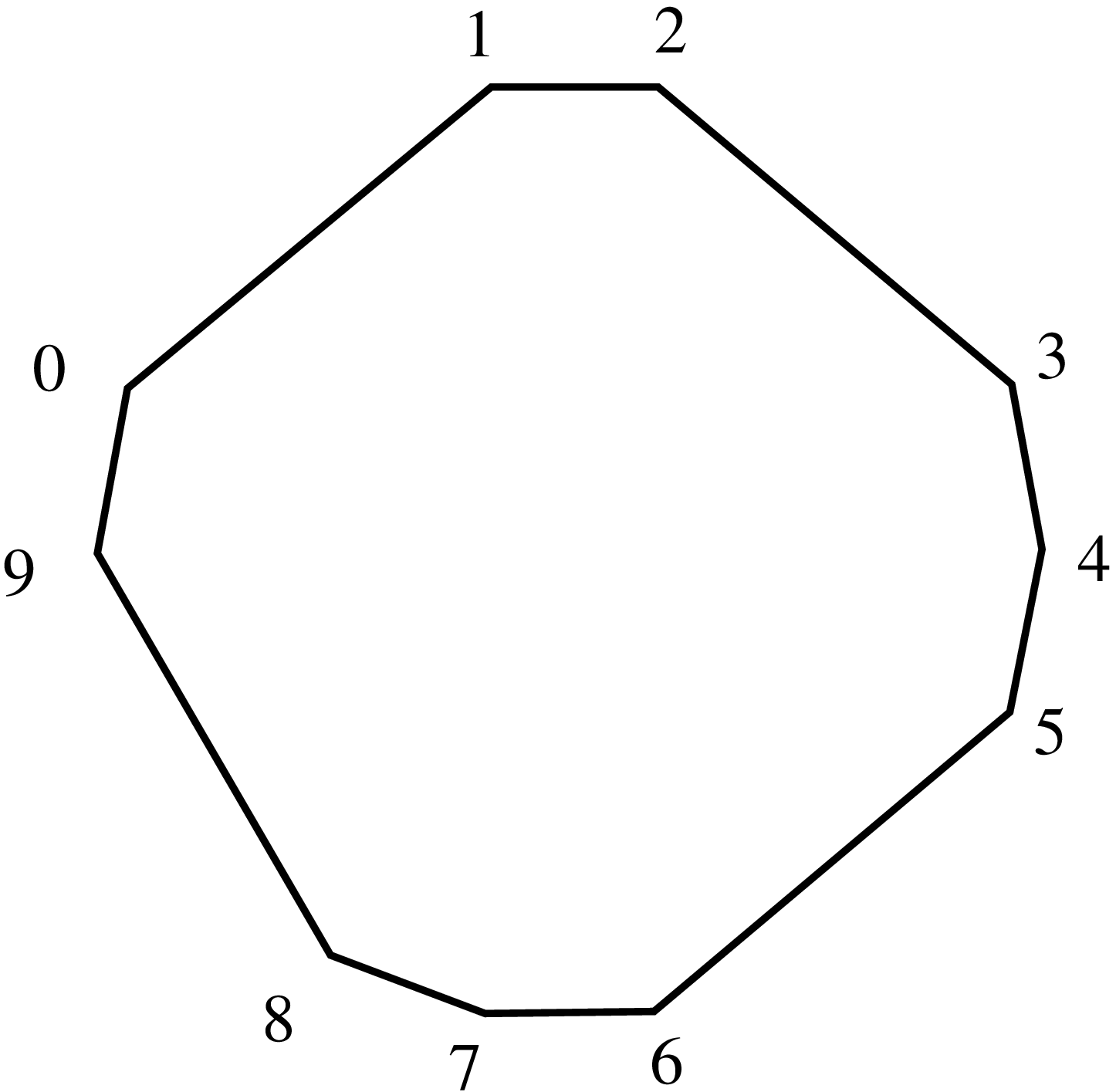}
\caption{Not symmetric}\label{fig:nonsym}
\end{minipage}
\hspace{0.1in}
\begin{minipage}[c]{2.5in}
\centering
\includegraphics[height=1.5in]{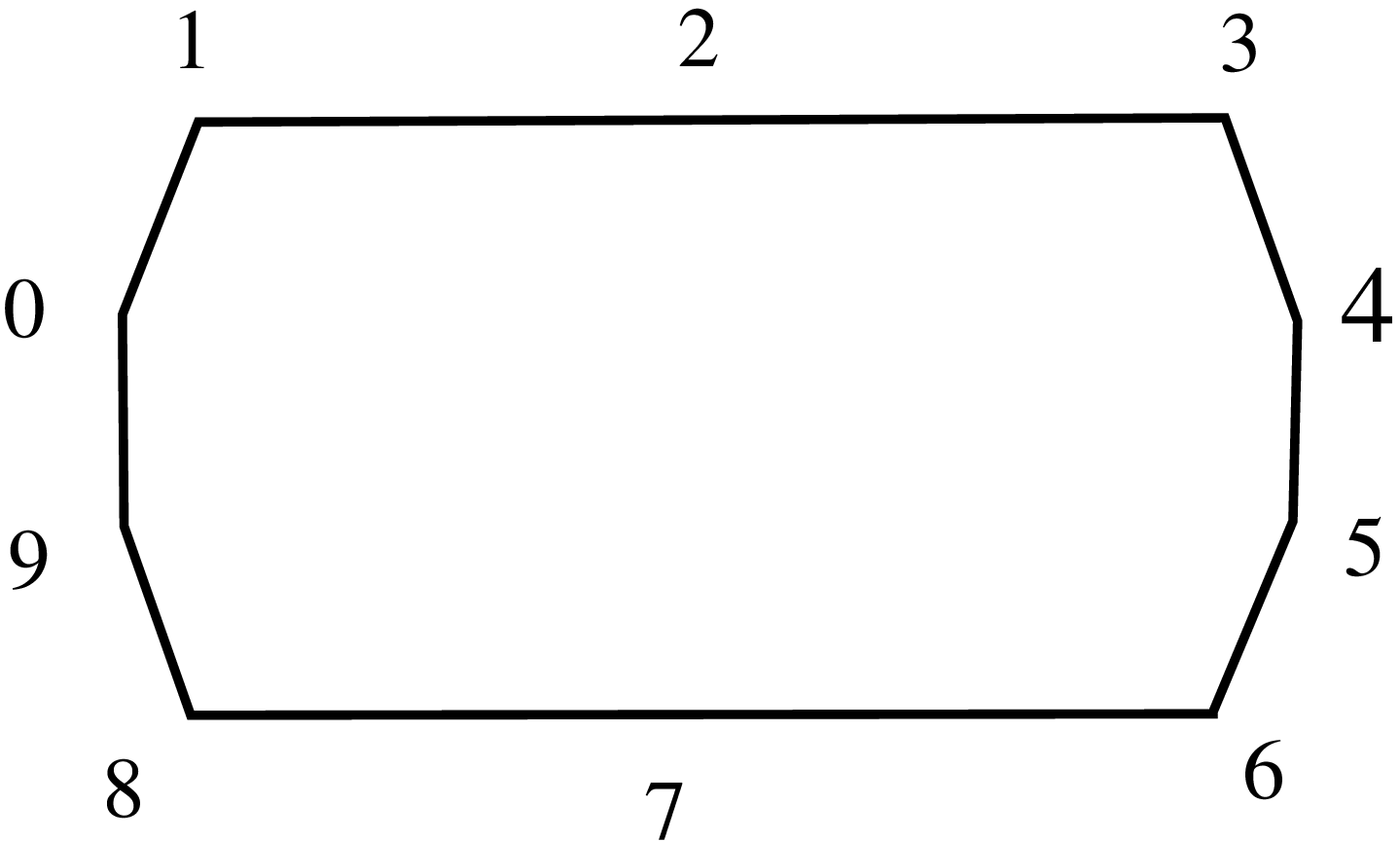}
\caption{Symmetric but not regular}\label{fig:sym}
\end{minipage}
\end{figure}

\begin{theorem}
Regular configurations are symmetric.
\end{theorem}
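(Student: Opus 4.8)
The plan is to prove that the regular configuration $\Delta$ in $\config(a,b)$ has $|\rot(\Delta)| = \gcd(a,b)$ by induction, mirroring the structure of the proofs of Theorem~\ref{thm:regFind} and Theorem~\ref{thm:regularuniq}. The base cases are $b \mid a$ (so in particular $b=1$), where by Proposition~\ref{prop:regbase} the regular configuration has characteristic sequence $\{t,\dots,t\}$; this necklace is invariant under every rotation by a multiple of $t+1$ beads, so its symmetry group has size $b = \gcd(a,b)$, and more generally the case $b \mid a$ reduces to this via Proposition~\ref{prop:inductionregular}. Symmetrically, by duality (Lemma~\ref{lem:dual} and the fact that $|\rot(\Delta)| = |\rot(\Delta^{*})|$, since switching colours commutes with rotation), the case $a \mid b$ is also settled.

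For the inductive step, I would take a pair $(a,b)$ with $0 < r < b$ where $a = tb + r$, and both $r \nmid b$-type reductions unavailable, i.e.\ a genuinely generic case. By Corollary~\ref{cor:indsym}, the regular configuration $\Delta' \in \config(r,b)$ obtained by deleting $t$ red beads from each gap satisfies $|\rot(\Delta)| = |\rot(\Delta')|$, and $\Delta'$ is itself regular. Now $\gcd(a,b) = \gcd(tb+r, b) = \gcd(r,b)$, so it suffices to show $|\rot(\Delta')| = \gcd(r,b)$. Applying Lemma~\ref{lem:dual} together with $|\rot(\Delta')| = |\rot((\Delta')^{*})|$ and $\gcd(r,b) = \gcd(b,r)$, this is equivalent to showing the regular configuration in $\config(b,r)$ is symmetric. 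Since $r < b$, if we assume $(a,b)$ was a minimal counterexample measured by $\min(a,b)$ (or by $b$ after first normalising via $a \leftrightarrow b$ so that $a \geq b$), the pair $(b,r)$ has strictly smaller second coordinate, so the induction hypothesis applies and we reach a contradiction.

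The main obstacle is getting the induction measure and the two reduction moves (the ``peel off $t$ from every gap'' move of Corollary~\ref{cor:indsym} and the duality move of Lemma~\ref{lem:dual}) to interlock cleanly, exactly as in Theorem~\ref{thm:regFind}: one must check that every non-base case $(a,b)$ can, after possibly swapping to assume $a \geq b$, be reduced to a pair $(b,r)$ with $r < b$ whose ``size'' is strictly smaller, so that the Euclidean-style descent terminates. This is the same descent already used twice in Section~\ref{sec:properites}, so the bookkeeping is routine but must be stated carefully. The one genuinely new ingredient to record explicitly is the observation $|\rot(\Delta)| = |\rot(\Delta^{*})|$ — that the dual operator, being colour-swap, sends a cyclic permutation of $\Delta$ to a cyclic permutation of $\Delta^{*}$ and vice versa — together with the elementary identity $\gcd(a,b) = \gcd(b,a) = \gcd(r,b)$; everything else is assembled from Proposition~\ref{prop:regbase}, Proposition~\ref{prop:inductionregular}, Corollary~\ref{cor:indsym}, and Lemma~\ref{lem:dual}.
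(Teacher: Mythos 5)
Your proposal is correct and follows essentially the same Euclidean-style descent as the paper's own proof: the base case $a=bt$, the ``peel off $t$ from every gap'' reduction to $\config(r,b)$ via Corollary~\ref{cor:indsym}, and induction on the second parameter. In fact you are slightly more careful than the paper, whose written proof passes from the non-symmetry of the regular configuration in $\config(r,b)$ directly to a contradiction with the minimality of $b$ without explicitly dualizing to $\config(b,r)$; your observation that colour-swapping commutes with rotation, so that $|\rot(\Delta')|=|\rot((\Delta')^{*})|$, is exactly the step needed to make that descent on the second coordinate legitimate.
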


\begin{proof}
Given the regular configuration $\Delta\in\config(a,b)$, we prove
 $|\rot(\Delta)|=\gcd(a,b)$ by an induction on
$b$.

{\bf Step 1:} The base case is $a=bt$, which includes $b=1$. In
this case the regular configuration $\Delta$ in $\config(a,b)$ is
symmetric as $|\rot(\Delta)|=t$ from Proposition~\ref{prop:outputExample} and $\gcd(a,b)=t$.

{\bf Step 2:} Now assume the theorem fails for some $\config(a,b)$
and let $b$ be the smallest one such that the regular
configuration $\Delta$ in $\config(a,b)$ satisfies
$|\rot(\Delta)|<\gcd(a,b)$. From step 1, we can assume $a=bt+r$
for some $t\in \mathbb{N}$ and $r\in (0,b)$. Now consider
$\Delta'$, the regular configuration in $\config(r,b)$, which is
characterized by $(x'_0,\cdots,x'_{d-1})$ by
Proposition~\ref{prop:indreg}. This implies
$|\rot(\Delta)|=|\rot(\Delta')|$ from Corollary~\ref{cor:indsym}.
Since $\gcd(a,b)=\gcd(r,b)$, $\Delta'$ is not symmetric, a
contradiction to the minimality of $b$.
\end{proof}

\section{Balanced words}\label{sec:words}

In this section we will discuss the relations between balanced
words and regular configurations. Let
$\omega=\omega_0\omega_1\cdots \omega_{n-1}\in\{0,1\}^{n}$ be a
word of length $n$ over alphabet $\{0,1\}$.  The weight of
$\omega$, denoted by $|\omega|_1$, is the number of 1s appeared in
$\omega$. All words  of length $n$ and weight $k$, where $0\leq k
\leq n$, form a set $\mw_{k,n}$.

An operator $\sigma$, called {\em cyclic shift}, is defined on
$\mw_{k,n}$ as follows:  $\sigma(w)=w_1\cdots w_{n-1} w_0$. This
gives an equivalence relation on $\mw_{k,n}$: $w\sim w'$ if and
only if they belong to the same cyclic shifting orbit. Here the
\emph{cyclic shifting orbit} of a word $w$ is defined to be
$\{\sigma^i(w) ~|~ 0\leq i <a+b \}$.

 From
Section~\ref{sec:model}, a word $w\in \mathbb{W}_{a,a+b}$ can be
regarded as a labelled configuration $\Delta \in \config(a,b)$.
From Proposition~\ref{prop:labelConf}, each (unlabelled) necklace
$\Delta$ can be associated with $(a+b)/|\rot(\Delta)|$ different
words, which form an orbit of the cyclic shifting. More precisely,
we have the following relation between words and configurations:

$$\frac{\mathbb{W}_{a,a+b}}{\sim} \cong \config(a,b).$$

 In the remainder of this section, we are going to
show that regular configurations are related to balanced words, an
important class of words.

Let $|w|$ denote the length of $w$ and define $|w|_0$ as
$|w|-|w|_1$. A {\em cyclic subword} of $w$ is any length-$q$
prefix of some $\sigma^{i-1}(w)$ for $1\leq i, q \leq m$. Then we
have the following definition.

\begin{definition}
A word $w$ is called {\em balanced} if for any two of its cyclic
subwords $z$ and $z'$, $|z|=|z'|$ implies $||z|_i-|z'|_i|\leq 1$
for $i=0,1$.
\end{definition}

 Balanced words, the finite version of sturmian words, form an important class of words.
We recommend~\cite{Lo} for more backgrounds and
~\cite{Jenkinson,JZ} for some recent developments.

\begin{theorem}
A configuration $\Delta \in \config(a,b)$ is regular if and only
if any of its associated word $w$ is balanced.
\end{theorem}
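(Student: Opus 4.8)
The idea is to replace ``balanced'' by an equivalent discrepancy bound and then match this bound against the system $\reg(a,b)$ by testing it on carefully chosen cyclic subwords. \emph{Step $1$ (metric form of balancedness).} I would first show that a word $w$ of length $n=a+b$ and weight $a$ is balanced if and only if every cyclic subword $z$ satisfies $\bigl|\,n\,|z|_{1}-a\,|z|\,\bigr|<n$, equivalently $\bigl|\,|z|_{1}-\tfrac{a}{n}\,|z|\,\bigr|<1$. One implication is immediate: for a fixed length $q$ all the numbers $|z|_{1}$ then lie in an open interval of length $2$, which contains at most two consecutive integers, so any two of them differ by at most $1$, and the same then holds for $|z|_{0}=q-|z|_{1}$. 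For the reverse implication I would average: among the $n$ cyclic subwords of a fixed length $q$, each of the $a$ ones is counted exactly $q$ times, so the numbers $|z|_{1}$ have mean $aq/n$; balancedness confines them to a set $\{m,m+1\}$, forcing $m\le aq/n\le m+1$ and hence $aq/n-1<|z|_{1}<aq/n+1$ (the case $aq/n\in\mathbb{Z}$ being treated separately). Since the multiset of cyclic subwords is rotation-invariant, this also shows balancedness is a property of the configuration, as the phrase ``any of its associated word'' demands.

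\textbf{Regular $\Rightarrow$ balanced.} Let $z$ be a cyclic subword, $q=|z|$, $p=|z|_{1}$. If $z$ is a run of red beads then $q\le x_{i}<\tfrac{a}{b}+1$ by $\reg(a,b)$ with $k=1$, so $qb<n$; if $z$ is a run of black beads, its length is controlled by a maximal block of consecutive $x_{j}$ equal to $0$, and $\reg(a,b)$ makes it short enough that $aq<n$. Otherwise $z$ contains both colours; replacing $z$ by its complementary arc when it contains all $b$ black beads (this flips the sign of $n|z|_{1}-a|z|$, so the target inequality is unchanged, and afterwards $z$ is a run of red beads), I may assume $z$ meets the black beads in a consecutive block $B_{i},\dots,B_{i+(q-p)-1}$. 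Its red beads are then the $x_{i}+\cdots+x_{i+(q-p)-2}$ reds strictly between successive ones of these, plus at most $x_{i-1}$ reds before $B_{i}$ and at most $x_{i+(q-p)-1}$ reds after $B_{i+(q-p)-1}$; so $p$ is squeezed between a sum of $(q-p)-1$ and a sum of $(q-p)+1$ consecutive entries of the characteristic sequence. Plugging both bounds into $\reg(a,b)$ and clearing denominators gives precisely $aq-n<pn<aq+n$, so $w$ is balanced by Step $1$.

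\textbf{Balanced $\Rightarrow$ regular.} Fix $i$ and $1\le k\le b-1$ and set $N=x_{i}+\cdots+x_{i+k-1}$. The beads strictly between $B_{i}$ and $B_{i+k}$ form a cyclic subword $z$ consisting of the $k-1$ black beads $B_{i+1},\dots,B_{i+k-1}$ and the $N$ reds between them, so $|z|=N+k-1$ and $|z|_{1}=N$. Step $1$ gives $\bigl|\,nN-a(N+k-1)\,\bigr|<n$, that is $\bigl|\,Nb-a(k-1)\,\bigr|<a+b$, and the upper half of this reduces to exactly $N<\tfrac{a}{b}k+1$. Applying this inequality to the complementary block of $b-k$ consecutive entries of the characteristic sequence and using $x_{0}+\cdots+x_{b-1}=a$ yields $N>\tfrac{a}{b}k-1$ as well (equivalently, since the upper bounds say $\mu_{b-k}>\tfrac{a}{b}(b-k)-1$, one may quote Lemma~\ref{lem:regsecond}); the cases $k=0,b$ are trivial. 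Hence $\reg(a,b)$ holds and $\Delta$ is regular.

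\textbf{The main obstacle.} The delicate point throughout is the numerology. Testing $\reg(a,b)$ on the ``obvious'' subword --- the arc from $B_{i}$ to $B_{i+k}$, of length $N+k$ --- produces inequalities that are too weak by a factor of order $n/b$ and are useless. The cure is to use the arc lying \emph{strictly} between the two black beads, of length $N+k-1$: that one extra $-1$ is exactly what makes $Nb<ak+b$ fall out with no slack, and dually one must trap $p$ between sums of $(q-p)\mp 1$ consecutive entries rather than $q-p$ of them. The monochromatic-run cases, the passage from the upper to the lower inequalities via $\sum_{j}x_{j}=a$, and the small-parameter boundary cases are all routine once this choice is made.
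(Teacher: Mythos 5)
Your proof is correct, but it takes a genuinely different route from the paper's. The paper proves both directions by contradiction on \emph{pairs} of objects: for the forward direction it assumes two equal-length cyclic subwords $u,v$ with $|u|_0-|v|_0\ge 2$, expands both against the characteristic sequence, and shows regularity forces the difference below $2$; for the backward direction it assumes a sum $x_i+\cdots+x_{i+k-1}$ deviating by at least $1$ from $ka/b$, produces by averaging a second sum deviating the other way, and builds from the two windows a pair of equal-length subwords whose weights differ by at least $2$. You instead insert an intermediate single-object characterization (your Step 1): a word is cyclically balanced iff every cyclic subword $z$ satisfies $||z|_1-\tfrac{a}{n}|z||<1$, proved by the observation that the $n$ subwords of a fixed length have mean weight $aq/n$ and balancedness pins them to two consecutive integers. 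Once balancedness is restated as a discrepancy bound against the mean, both implications become direct verifications rather than contradictions: you trap $|z|_1$ between sums of $(q-p)\mp 1$ consecutive entries of the characteristic sequence and feed both into $\reg(a,b)$, and conversely you read off the $\reg(a,b)$ upper bounds from the arc strictly between $B_i$ and $B_{i+k}$, recovering the lower bounds via $\sum_j x_j=a$ exactly as in Lemma~\ref{lem:regsecond}. The underlying combinatorial decomposition of an arc ($\epsilon_1$ reds, a consecutive block of blacks, $\epsilon_2$ reds) is the same in both proofs, but your averaging lemma makes the parallel between ``balanced'' and the discrepancy-style definition of regularity explicit and symmetric, at the cost of having to handle the monochromatic and wrap-around ($s=b$, complementary-arc) cases separately; the paper avoids those case splits but pays with two proofs by contradiction and a less transparent source for the crucial ``$\ge 2$'' inequalities. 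Your numerology checks out in all cases I verified, including the boundary ones ($s=0$, $k=1$ with $x_i=0$, $k=b$).
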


\begin{proof}

By definition, a word $w$ is balanced if and only if any word in
its cyclic shifting orbit is balanced. Thus we can always choose a
convenient one in the orbit corresponding to $\Delta$ to simplify
the following proof.  Without loss of generality, we can also
assume $a\geq b$.

$``\Rightarrow"$: If $w$ is not balanced, then $| |u|_i-|v|_i |
\geq 2$ for a pair of cyclic subwords $u$ and $v$ with the same
length, say $t$. Without loss of generality, we can assume
$|u|_0-|v|_0\geq 2$. Furthermore, we associate a second kind
labelling on black beads, i.e., the $0$s in the word. Assume the
first 0 appeared in $u$ is labelled with $1$. Then the structure
of $u$ can be schematically represented in the following way.

$$u=\underbrace{1,\cdots,1}_{\epsilon_1}, 0_{1}, \cdots ,
0_{k}, \underbrace{1,\cdots,1}_{\epsilon_2}$$ where $k$ is the
number of $0$s appeared in $u$, $0 \leq \epsilon_1\leq x_{0}$ and
$0 \leq \epsilon_2 \leq x_{k}$. Here $x_i$ is the number of $1$s
(red beads) appeared between $0_i$ ($B_i$) and $0_{i+1}$
$B_{i+1}$. Since $|u|=t$, these parameters satisfy the following
equation.
\begin{equation}\label{eq:u}
\epsilon_1+x_1+\cdots+x_{k-1}+\epsilon_2+k=t.
\end{equation}

On the other hand, we have the following representation of $v$.

$$v=\underbrace{1,\cdots,1}_{\epsilon'_1}, 0_{i}, \cdots ,
0_{s+i-1}, \underbrace{1,\cdots,1}_{\epsilon'_2}.$$ where $s$ is
the number of $0$s appeared in $v$, $0 \leq \epsilon'_1\leq
x_{i-1}$ and $0 \leq \epsilon'_2 \leq x_{i+s-1}$. Similar to
Equation~(\ref{eq:u}), they satisfy:
\begin{equation}\label{eq:v}
\epsilon'_1+x_i+\cdots+x_{i+s-2}+\epsilon'_2+s=t.
\end{equation}

Since we can solve $k$ and $s$ from Equation~(\ref{eq:u})
and~(\ref{eq:v}) respectively, the condition $k-s \geq 2$, which
comes from $|u|_0-|v|_0 \geq 2$, becomes:
\begin{equation}
(\epsilon'_1+x_i+\cdots+x_{i+s-2}+\epsilon'_2) -
(\epsilon_1+x_1+\cdots+x_{k-1}+\epsilon_2) \geq 2.
\end{equation}

By the constraints of $\epsilon$ and $\epsilon'$, the above
equation can be further simplified as:
\begin{equation}\label{eq:uv}
(x_{i-1}+x_i+\cdots+x_{i+s-1}) - (x_1+\cdots+x_{k-1}) \geq 2.
\end{equation}

But from the condition that $\Delta$ is regular, we can obtain an
upper bound of the sum in the first parenthesis and a lower bound
for that in the second one.

\begin{equation}
(x_{i-1}+x_i+\cdots+x_{i+s-1})< (s+1)\frac{a}{b}+1.
\end{equation}

\begin{equation}
(x_1+\cdots+x_{k-1})>(k-1)\frac{a}{b}-1.
\end{equation}

The above two bounds give us the following inequality:
\begin{eqnarray*}
(x_{i-1}+\cdots+x_{i+s-1}) - (x_1+\cdots+x_{k-1}) &<&
(s+1)\frac{a}{b}+1 - [(k-1)\frac{a}{b}-1]\\
 &=& (s+2-k)\frac{a}{b}+2\\
 &\leq& 2.
\end{eqnarray*}
That is a contradiction to Equation~(\ref{eq:uv}). In the last step of above inequalities
we use the fact that $s+2 \leq k$. \\

$``\Leftarrow"$: In this direction, we need to prove that $\Delta$
is regular with the assumption that $w$, one of its associated
words, is balanced. If this fails, then we have:
\begin{equation}
|x_i+\cdots+x_{i+k-1} - k\frac{a}{b}| \geq 1
\end{equation}
 for some $i$ and $k$. Among all such pairs $(i,k)$ satisfied the
 above inequality, we fix one pair $(i,k)$ such that $k$ is the minimal.
 That means either $x_i+\cdots+x_{i+k-1}\geq 1+(ka/b)$ or
$x_i+\cdots+x_{i+k-1}\leq -1+(ka/b)$. Here we only prove this
direction for the first case as the following arguments can be
easily modified for the second one.

Firstly we claim there exists one $j$ such that
$x_j+\cdots+x_{j+k-1}< ka/b$. If not, then
$$k(x_0+\cdots+x_{b-1})=\sum_{s=0}^{b-1}
(x_s+\cdots+x_{s+k-1})\geq ka+1,$$ a contradiction to the fact
$(x_0+\cdots+x_{b-1})=a$. Thus we have
\begin{equation}
(x_j+\cdots+x_{j+k-1})-(x_i+\cdots+x_{i+k-1})\geq 2
\end{equation}
because both sums in the parentheses are integer.

Now there exist the following two fragments in the configuration
(Recall that 1 stands for read bead $R$ and 0 stands for black
bead $B$):

$$u=0_{i},\underbrace{1,\cdots,1}_{x_i}, 0_{i+1}, \cdots ,
0_{i+k-1}, \underbrace{1,\cdots,1}_{x_{i+k-1}}, 0_{i+k}$$

and

$$v=0_{j},\underbrace{1,\cdots,1}_{x_j}, 0_{j+1}, \cdots ,
0_{j+k-1}, \underbrace{1,\cdots,1}_{x_{j+k-1}}, 0_{j+k}.$$

Furthermore, construct a new fragment $v'$ by choosing the first
$|u|+1$ bits from $v$ and deleting $0_j$. Then $|u|=|v'|$ and
there exists at least two more 0s in $u$ than that in $v'$ since
the 0s in $u$ are labelled from $i$ to $i+k$ while the labels of
0s in $v'$ are falling into the interval $[j+1,j+k-1]$. In other
words, $| |u|_1-|v'|_1| \geq 2$. As each fragment can be realized
as a cyclic subword of $w$, $u$ and $v'$ are two cyclic subwords
with the same length but their weight are different greater than
2, a contradiction to the fact that $w$ is balanced.
\end{proof}

From the relation between words and configurations, the above theorem implies the following corollary.
\begin{corollary}
$\mathbb{W}_{a,a+b}$  has exactly ${(a+b)}/{\gcd(a,b)}$ balanced
words, which form a cyclic shifting orbit that corresponds to the
regular configuration in $\config(a,b)$.
\end{corollary}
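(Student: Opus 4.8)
The plan is to deduce the corollary from the two main results already in hand: the theorem just proved (a configuration $\Delta\in\config(a,b)$ is regular iff each associated word is balanced) together with the counting established earlier. First I would recall the bijection
$$\frac{\mathbb{W}_{a,a+b}}{\sim}\cong\config(a,b),$$
so that the set of balanced words in $\mathbb{W}_{a,a+b}$ is a union of cyclic shifting orbits, namely exactly those orbits whose corresponding configuration is regular. By the uniqueness theorem (Theorem~\ref{thm:regularuniq} combined with Theorem~\ref{thm:regFind}), there is precisely one regular configuration in $\config(a,b)$, so the balanced words in $\mathbb{W}_{a,a+b}$ form exactly one cyclic shifting orbit: the one associated with the regular configuration.

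Next I would count the size of that orbit. By Proposition~\ref{prop:labelConf}, the orbit corresponding to a configuration $\Delta$ has size $(a+b)/|\rot(\Delta)|$. Since the regular configuration is symmetric (the theorem ``Regular configurations are symmetric''), its symmetry group has size $\gcd(a,b)$, hence the orbit has size $(a+b)/\gcd(a,b)$. This gives both assertions of the corollary at once: the total number of balanced words in $\mathbb{W}_{a,a+b}$ is $(a+b)/\gcd(a,b)$, and they constitute the single cyclic shifting orbit that maps to the regular configuration under the above correspondence.

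I do not expect a serious obstacle here; the corollary is essentially a bookkeeping consequence of results already proved. The only point requiring mild care is making the identification ``balanced words $\leftrightarrow$ regular configuration'' precise: a word $w$ and all words in its cyclic shifting orbit are simultaneously balanced or not (noted in the proof of the theorem), and they all correspond to the same unlabelled configuration under $\sim$, so ``$\Delta$ regular iff some/any associated word balanced'' is well posed and lets us pass freely between the two viewpoints. Once that is observed, the proof is a two-line combination of the uniqueness theorem, the symmetry theorem, and Proposition~\ref{prop:labelConf}.
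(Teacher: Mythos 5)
Your proposal is correct and follows essentially the same route the paper intends: the paper derives the corollary directly from the regular-iff-balanced theorem together with the uniqueness of the regular configuration, Proposition~\ref{prop:labelConf}, and the fact that regular configurations are symmetric (so $|\rot(\Delta)|=\gcd(a,b)$). Your write-up merely makes explicit the bookkeeping that the paper leaves implicit.
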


 It is already know in~\cite{BS,JZ} that there are precisely $a+b$ balanced
words in $\mathbb{W}_{a+b}$ if $a$ and $b$ are coprime. The above corollary slightly
 generalizes that result. From Section~\ref{sec:symmtry} we also know that the orbit
  corresponding to the regular
  configuration should has the smallest size.

\section{Disjoint cycles in shift graphs}\label{sect:cycles}

In this section we will study the cycles packing number of
$\Shift(n,m)$, the directed Cayley graph of $\mathbb{Z}_n$ with
two generators $\{1,m\}$.

Recall that the vertex set of $\Shift(n,m)$ is
$\{0,1,2,\cdots,n-1\}$ and there are two types of edge sets: type
I consists of the edge generating by $\{1\}$, i.e., the edge has
the form $(i,i+1)\ (\md n)$; type II consists of the edge
generating by $\{m\}$, i.e., the edge has the form
 $(i,i+m)\ (\md n)$. Here $i$ runs through all vertices.
 See Fig~\ref{fig:shift} for $\Shift(9,3)$.

\begin{figure}[h]
\centering
\includegraphics[height=1.5in]{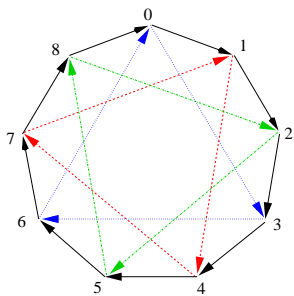}
 \caption{Shift (9,3) }\label{fig:shift}
\end{figure}

The cycles discussed in this paper are directed. A cycle
$C=v_0v_1\cdots v_{d-1}v_0$ can be represented by its vertex
sequence: $A_C=<v_0,v_1,\cdots,v_{d-1}>$ where $d$ is the size of
$C$.  On the other hand, the vertices of a cycle $C$ is the
unordered $d$-set: $V(C)=\{v_0,v_1,\cdots,v_{d-1}\}$.

Two cycles $C$ and $C'$ are called (vertex) disjoint if $V(C)\cap
V(C')=\emptyset$. A collection of disjoint cycles is a set of
disjoint cycles $\mathcal{C}=\{C_1,\cdots, C_k\}$ such that they
are pairwise disjoint. The size of a collection $\mathcal{C}$, is
the number of cycles it contains and will be denoted by
$|\mathcal{C}|$.

\begin{definition}
The cycles packing number for a graph $G$, denoted by $\nu_0(G)$,
is defined as: $$\nu_0(G)=\max
\{|\mathcal{C}|~|~\mbox{$\mathcal{C}$ is a collection of vertex
disjoint cycles in $G$}\}.$$
\end{definition}

Another version of cycles packing number is studying edge disjoint
cycles but in this paper we are only considering the vertex
disjoint version. We remark that $\nu_0(G)$ plays an important
role in many fields~\cite{BG}. One recent example is that
$\nu_0(G)$ gives the lower bound of the guessing number of $G$, a
parameter of graph defined by Riis~\cite{CamRiisWu,Riis}.

Let $n=am+b$ for $0\leq a$ and $0\leq b < m$. Let $d=a+b$ and
$k=\floor{n/(a+b)}$ in the remainder of this section. Then we have
the following theorem, which is the main result of this section.

\begin{theorem}\label{thm:main2}
$\nu_0(\Shift(n,m) )\geq k$ . Furthermore, there is an algorithm
to produce a collection of disjoint cycles $\mathcal{C}$ with
$|\mathcal{C}|=k$.
\end{theorem}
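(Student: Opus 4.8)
The plan is to use the regular configuration in $\config(a,b)$ as the blueprint for constructing $k$ disjoint cycles in $\Shift(n,m)$. First I would set up the correspondence between walks in $\Shift(n,m)$ and configurations: starting from a vertex $v$, each type~I edge adds $1$ and each type~II edge adds $m$, so a closed walk using $a$ type~II steps and $b$ type~I steps returns to its start precisely when $am + b \equiv 0 \pmod n$, i.e.\ when $am+b = n$ (the case at hand, since $n = am+b$ with $0\le b<m$). Thus any cyclic arrangement of $a$ ``long'' steps (length $m$) and $b$ ``short'' steps (length $1$) — that is, any configuration in $\config(a,b)$ in the polygon model with $\alpha = m$, $\beta = 1$ — traces out a closed walk of combinatorial length $d = a+b$ in $\Shift(n,m)$, and the polygon-model picture is literally the picture of this walk drawn on $\mathbb{Z}_n$.

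Next I would take $\Delta$ to be the \emph{regular} configuration in $\config(a,b)$, which exists and is unique by the results of Section~\ref{sec:properites}, and use it to define $k = \floor{n/d}$ cycles $C_0, C_1, \dots, C_{k-1}$, where $C_j$ is the walk following the step-pattern of $\Delta$ but started at vertex $j$ (equivalently, $C_j = C_0 + j$, the translate of $C_0$ by $j$ in $\mathbb{Z}_n$). I then need two things: (i) each $C_j$ is a genuine cycle, i.e.\ it visits $d$ \emph{distinct} vertices, and (ii) the $C_j$ are pairwise vertex-disjoint. For (ii), note that the vertex set of $C_j$ is $\{j + (\text{partial sums of the step-pattern})\}$; if $V(C_j)$ and $V(C_{j'})$ shared a vertex, then $j - j'$ would be a difference of two partial sums of the $\Delta$-pattern, so it suffices to show that the $d$ partial sums (the vertices visited by $C_0$, namely $0$ together with the $d-1$ intermediate sums) are pairwise incongruent mod $n$ \emph{and} pairwise differ by at least\dots{}—more precisely that the set $V(C_0) \subseteq \mathbb{Z}_n$ has the property that $V(C_0), V(C_0)+1, \dots, V(C_0)+(k-1)$ are disjoint. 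This is exactly a discrepancy statement: the partial sums of $\Delta$ are ``evenly spread'' around $\mathbb{Z}_n$ because $\Delta$ is regular, so consecutive visited vertices are roughly $n/d$ apart, leaving room for $k = \floor{n/d}$ translates. Here I would invoke the regularity inequalities $\reg(a,b)$ (via Lemma~\ref{lem:regsecond} and Lemma~\ref{lem:dual}, passing to the dual configuration in $\config(b,a)$ which controls the spacing of the long steps) to bound below the gap between any two distinct partial sums.

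The main obstacle, and the technical heart of the argument, is precisely this spacing estimate: I must show that for the regular $\Delta$, any two of the $d$ vertices visited by $C_0$ differ (as residues mod $n$, taking the cyclic distance) by at least $k = \floor{n/d}$, and that $C_0$ itself has no repeated vertex. The clean way is to translate a partial sum of the configuration sequence into a vertex: the vertex reached after some prefix with $p$ long steps and $q$ short steps is $pm + q$, and the regularity of $\Delta$ says that over any window of $\ell$ combinatorial steps the number of long steps is within $1$ of $\ell a/d$, equivalently the ``value'' $pm+q$ advances by roughly $\ell n/d$ over $\ell$ steps; combining the regular inequalities for the window and its complement pins the gap between $n/d - O(1)$ and $n/d + O(1)$, and a careful check of the integrality (using $b < m$, so the short steps cannot accumulate enough to close a gap) gives gap $\ge k$. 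Once the spacing lemma is in hand, disjointness of the $k$ translates is immediate, the count $k = \floor{n/d}$ is exactly what the spacing allows, and $\nu_0(\Shift(n,m)) \ge k$ follows; the algorithm is simply: run Algorithm~I (\findregular) to build $\Delta$, read off $C_0$, and output $C_0, C_0+1, \dots, C_0+(k-1)$.
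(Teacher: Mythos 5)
Your overall framework---build the generic cycle of the regular configuration and take $k$ translates, reducing disjointness to a statement about the difference set of the partial sums---is the same as the paper's. But the specific construction you commit to, $C_0, C_0+1,\dots,C_0+(k-1)$, is wrong, and the ``spacing lemma'' you identify as the technical heart is false. Consecutive vertices visited by $C_0$ differ by exactly $1$ (along a type I edge) or exactly $m$ (along a type II edge), never by ``roughly $n/d$''; so as soon as $b>0$ the residue $1$ lies in the difference set $D(V_\Delta)$ and hence $C_0\cap(C_0+1)\neq\emptyset$. Concretely, for $\Shift(10,3)$ one has $a=3$, $b=1$, $d=4$, $k=2$; the regular configuration in $\config(3,1)$ gives the step pattern $\langle 1,3,3,3\rangle$, so $C_0=\{0,1,4,7\}$ and $C_0+1=\{1,2,5,8\}$ meet at the vertex $1$. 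Regularity spreads the type I steps evenly among the type II steps, but it cannot spread the \emph{visited vertices} evenly in $\mathbb{Z}_n$, so no choice of configuration rescues translation by $1$ (except in the degenerate case $b=0$, which is exactly the $\Shift(9,3)$ example).

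The paper's proof instead translates by $m-1$: the collection is $\{\seq{0},\seq{(m-1)},\dots,\seq{(k-1)(m-1)}\}$ (Theorem~\ref{thm:main2restate}). Every nonzero element of $D(V_\Delta)$ has the form $p_j m+j$, where $j$ counts type I steps and $p_j$ counts type II steps in a contiguous stretch of the necklace, with $\mu_{j-1}\leqslant p_j\leqslant \xi_j$ (Proposition~\ref{mainproof:1}). The heart of the argument (Theorem~\ref{maintheorem}) is then arithmetic rather than metric: assuming $q(m-1)\equiv p_jm+j \ (\md\ n)$ for some $1\leqslant q\leqslant k-1$, one splits this congruence into two integer equations, eliminates $q$, and derives $1+\mu_{j-1}\leqslant \frac{a}{b}(j-1)$, contradicting the regularity criterion $1+\mu_{j-1}>\frac{a}{b}(j-1)$ of Lemma~\ref{lem:regsecond}. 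So the regularity of $\Delta$ is used to exclude the specific residues $q(m-1)$ from $D(V_\Delta)$, not to force a minimum gap of $k$ between all visited vertices. To repair your argument you would need to replace the shift amount $1$ by $m-1$ and the spacing estimate by this number-theoretic analysis of $D(V_\Delta)$.
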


 Before proving the theorem, we use $\Shift(9,3)$ (see Fig~\ref{fig:shift})
 to illustrate
the intuition behind it. Let $C_1=\{0,3,6\}$, $C_2=\{1,4,7\}$ and
$C_3=\{2,5,8\}$. Then  $\nu_0(\Shift(9,3))=3$
and $\mathcal{C}=\{C_1, C_2, C_3\}$ is such a collection. Furthermore, $C_2$ and $C_3$ can be regarded as
being obtained from $C_1$ via a ``rotation". Here one crucial
observation is that the ``polygon'' under $C_1$ is regular.
But we need to consider regular
configurations for general cases.

To this end, associate a necklace to each cycle in $\Shift(n,m)$.
Given a cycle $C$  by its vertex sequence
$A=<v_0,v_1,\cdots,v_{d-1}>$. Then its {\em differential sequence}
is defined as $<v_1-v_0,\cdots,v_{d-1}-v_{d-2},v_0-v_{d-1}>$ and
denoted by $\nabla(A)$. Here the substraction is calculated with
modulo $n$.

For a cycle $C$ in $\Shift(n,m)$, the sequence $\nabla(A_C)$
consists of two numbers, $1$ and $m$. Furthermore, they are
respectively corresponding to two types of edges in $\Shift(n,m)$.
If we represent $1$ in $\nabla(A_C)$ by black bead $B$ and $m$ by
red bead $B$, then $\nabla(A_C)$ gives us a labelled necklace in a
natural way. By forgetting its labelling, we obtain a
configuration from $C$, which will be denoted by $\Delta_C$.

On the other hand, given a configuration $\Delta$ and a vertex
$v$, we can construct a path $C_{v,\Delta}$ in $\Shift(n,m)$ such
that $C_{v,\Delta}$ contains $v$ and its associated configuration
is $\Delta$. Firstly, obtain the differential sequence
$\nabla=<l_0,\cdots,l_{d-1}>$ from the necklace $\Delta$. Then the
cycle $C_{v,\Delta}$, which will be simplified as $\seq{v}$ when
$\Delta$ is clear, is given by $<v,v+k_0,v+k_1,\cdots,v+k_{d-2}>$
where $k_p=l_0+\cdots+l_p$. The cycle $\seq{0}$, which will play
an important role in the following analysis, is called the {\em
generic cycle} of $\Delta$ and its vertex set is denoted by
$V_\Delta$. Now Theorem~\ref{thm:main2} can be restated as the
following one.
\begin{theorem}\label{thm:main2restate}
For the regular configuration $\Delta$ in $\config(a,b)$, the
set\\
$\{\seq{0},\seq{(m-1)},\cdots,\seq{(k-1)(m-1)}\}$, where
$\seq{i}=C_{i,\Delta}$, is a collection of pairwise disjoint
cycles in $\Shift(n,m)$.
\end{theorem}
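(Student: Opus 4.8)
The plan is to prove the two assertions that make up Theorem~\ref{thm:main2restate} and its restated form: first, that each $\seq{i(m-1)}$ is genuinely a (directed) cycle in $\Shift(n,m)$ of length $d=a+b$, and second, that the $k$ vertex sets $V(\seq{i(m-1)})$ for $i=0,\ldots,k-1$ are pairwise disjoint. The first point is easy: by construction $\seq{v}$ follows $d$ consecutive edges of $\Shift(n,m)$ (each step adds $1$ or $m$ modulo $n$), so it closes up into a cycle precisely when the total displacement $k_{d-1}=l_0+\cdots+l_{d-1}=a\cdot m+b\cdot 1=am+b=n\equiv 0\pmod n$, which holds by the definition of $a,b$; and the $d$ visited vertices are distinct because all partial sums $k_p$ for $0\le p\le d-1$ lie strictly between $0$ and $n$. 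So the crux is disjointness.

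For disjointness, the key reduction is to translate ``$V(\seq{i(m-1)})\cap V(\seq{j(m-1)})\neq\emptyset$'' into a statement about the regular configuration $\Delta$. A vertex of $\seq{i(m-1)}$ has the form $i(m-1)+k_p\pmod n$ where $k_p$ is a partial sum of the differential sequence of $\Delta$ through the first $p$ beads; writing $p$ in terms of how many black beads (the $1$'s) and red beads (the $m$'s) precede it, one gets $k_p=qm+\ell$ where $q$ is the number of red beads and $\ell$ the number of black beads among the first $p$. Thus the vertices of the generic cycle $V_\Delta$ are exactly the numbers $qm+\ell\pmod n$ arising from prefixes of the necklace. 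A collision between $\seq{i(m-1)}$ and $\seq{j(m-1)}$ then says $(i-j)(m-1)\equiv (q'm+\ell')-(qm+\ell)\pmod n$ for two prefixes; I would set $c=i-j$ with $1\le c\le k-1$ and rewrite this as $(q'-q+c)m+(\ell'-\ell-c)\equiv 0\pmod n$, i.e. $(\Delta q)m+(\Delta\ell)=n$ or $=0$ for suitable integers $\Delta q,\Delta\ell$ whose ranges are controlled: $|\Delta q|\le a$ and $|\Delta\ell|\le b$ roughly, with a shift of $c$. The heart of the matter is to show these range constraints, together with the \emph{regularity} of $\Delta$ (via Lemma~\ref{lem:regsecond}, i.e. $1+\mu_j>\tfrac{a}{b}j$, equivalently Equation~(\ref{eq:regular})), force $c=0$.

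Concretely, I expect to argue as follows. Suppose the generic cycle $\seq{0}$ and its ``rotate'' $\seq{c(m-1)}$ share a vertex; using the cyclic symmetry that $\seq{v}$ is obtained from $\seq{0}$ by the graph automorphism $x\mapsto x+v$, it suffices to understand when $V_\Delta$ and $V_\Delta+c(m-1)$ meet, for $1\le c\le k-1$. Unwinding, a shared vertex produces two prefixes of $\Delta$ whose ``red-bead count difference'' $s$ and ``black-bead count difference'' $e$ satisfy $sm+e=c(m-1)+n\cdot(\text{0 or 1})$; since $n=am+b$, this becomes an equation of the form $(s-c-\varepsilon a)m = (\varepsilon b + c - e)$ for $\varepsilon\in\{0,1\}$, and because $0<b<m$ and the right-hand side is small relative to $m$, the multiplier of $m$ on the left must vanish, pinning down $s$ and $e$ in terms of $c,\varepsilon$. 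Then $e$ is the number of black beads strictly between two black beads $B_i$ and $B_{i+j}$ of the necklace (so $e=j$ essentially) while $s$ counts the red beads between them, namely $s=x_i+x_{i+1}+\cdots+x_{i+j-1}$ up to the $\epsilon_1,\epsilon_2$ boundary corrections already used in the proof of Lemma~\ref{lem:dual}; feeding $s$ and $e$ into the regularity inequality $\tfrac{a}{b}j-1<s<\tfrac{a}{b}j+1$ yields $c\cdot(\text{something positive}) < $ a quantity $<1$, which is impossible unless $c=0$.

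The main obstacle I anticipate is bookkeeping the boundary terms cleanly: when a prefix of the necklace ends in the middle of a run of red beads, the counts $q,\ell$ are not exactly ``$x_i+\cdots+x_{i+j-1}$'' and ``$j$'' but are offset by $\epsilon_1\in[0,x_{i-1}]$ and $\epsilon_2\in[0,x_{i+j}]$ type quantities, exactly as in the Case~3 analysis in the proof of Lemma~\ref{lem:dual}. I would handle this by the same device used there — absorb the $\epsilon$'s into the neighbouring $x$-sums to get two-sided bounds $(j-1)\tfrac{a}{b}-1 < \text{(red count between the two active black beads)} < (j+1)\tfrac{a}{b}+1$ — and then check that the range of $j$ stays within $[0,b]$ so that Lemma~\ref{lem:regsecond} applies on the nose (this is where the hypothesis $k=\floor{n/d}$ and the bound $c\le k-1$ get used, keeping $j$ from wrapping all the way around the necklace). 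A secondary, purely technical point is the $\varepsilon\in\{0,1\}$ case split coming from reducing modulo $n$; I expect both cases to close by the identical inequality chase, so I will do one in detail and remark that the other is symmetric, mirroring the style of the earlier proofs in the paper.
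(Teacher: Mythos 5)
Your overall strategy is the paper's: translate $\seq{0}\cap\seq{c(m-1)}\neq\emptyset$ into $c(m-1)\in D(V_\Delta)$, describe $D(V_\Delta)$ by red/black prefix counts so that a collision becomes a Diophantine condition $c(m-1)\equiv p_j m+j \ (\md n)$, pin down the coefficients by a size argument, and contradict Lemma~\ref{lem:regsecond}. The skeleton matches Propositions~\ref{edges}--\ref{mainproof:1} and Theorem~\ref{maintheorem}, and your preliminary points (each $\seq{v}$ really is a $d$-cycle; translation invariance reduces everything to the generic cycle; the $\epsilon_1,\epsilon_2$ boundary bookkeeping) are fine. But the pinning-down step --- the heart of the proof --- has a genuine hole. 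The multiple of $n$ absorbed in the reduction is not restricted to $\{0,1\}$: it is $r=\floor{c(m-1)/n}$, and since $c$ can be as large as $k-1$ while $m-1$ can exceed $a+b$, $r$ can be larger than $1$ (for $n=21$, $m=10$ one has $a=2$, $b=1$, $k=7$, and $c=6$ gives $c(m-1)=54=12+2n$). Your case split $\varepsilon\in\{0,1\}$ therefore misses cases, and, worse, your justification that ``the right-hand side is small relative to $m$'' silently relies on it. With general $r$ the equation reads $(c-p_j-ra)m=c+rb+j$, and the assertion that the right-hand side is less than $2m$ is exactly Proposition~\ref{prop:extramain} of the paper, which needs its own computation combining $c\leq k-1$, $r=\floor{c(m-1)/n}$ and $k=\floor{n/(a+b)}$; it cannot be waved away. (Also, since the right-hand side is strictly positive, the coefficient of $m$ is forced to equal $1$, not to vanish as you state.)

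The closing step is also not what you predict. The contradiction does not take the form ``$c\cdot(\text{positive})<1$''. After the coefficients are pinned down one has $m=1+r(a+b)+j+p_j$; the hypothesis $c\leq k-1$ is then used \emph{not} to keep $j$ in range (the window length $j$ lies in $[0,b]$ automatically, because it counts the black beads inside a single element of $D(V_\Delta)$) but, via $k(a+b)\leq am+b$, to derive the competing bound $m\leq r(a+b)+(1+\frac{a}{b})j-\frac{a}{b}$; subtracting the two expressions for $m$ gives $1+p_j\leq\frac{a}{b}(j-1)$, which contradicts $1+\mu_{j-1}>\frac{a}{b}(j-1)$ since $p_j\geq\mu_{j-1}$. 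So the inequality $c\leq k-1$ enters the regularity estimate itself, and your sketch does not show where; as written, the regularity inequality alone would not produce a contradiction.
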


Notice that we can easily verify the above theorem for $a=0$ or
$b=0$. In fact, in this case the regular configuration in
$\config(n,m)$ is exactly the regular polygon. Therefore, in the
following proof, we will assume $a>0$ and $b>0$ for simplicity. On
the other hand, we do not give explicitly the algorithm stated in
Theorem~\ref{thm:main2}, since it can be easily constructed from
Theorem~\ref{thm:main2restate} and Algorithm I (see
Section~\ref{sect:findingconf}).  Furthermore, The above theorem
has an interesting corollary.

\begin{corollary}\label{thm:sequence}
Given $n$, there exist $k$ disjoint $d$-sequences $A_1,\cdots,A_k$
such that for each $i$, $\nabla(A_i)$ consists only of $m$ and $1$
and the number of $m$ is $a$.
\end{corollary}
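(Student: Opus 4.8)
\textbf{Proof proposal for Corollary~\ref{thm:sequence}.}

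The plan is to read this off directly from Theorem~\ref{thm:main2restate}. Let $\Delta$ be the (unique) regular configuration in $\config(a,b)$, which exists and is constructible by Algorithm~I, and let $\nabla=<l_0,\dots,l_{d-1}>$ be its differential sequence, so $\nabla$ consists of exactly $a$ copies of $m$ and $b$ copies of $1$. For $i=0,1,\dots,k-1$ put $A_{i+1}=A_{\seq{i(m-1)}}$, the vertex sequence of the cycle $C_{i(m-1),\Delta}$ in $\Shift(n,m)$; explicitly $A_{i+1}=<\,i(m-1),\ i(m-1)+k_0,\ \dots,\ i(m-1)+k_{d-2}\,>$ with $k_p=l_0+\dots+l_p$, all arithmetic modulo $n$. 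Each $A_{i+1}$ is a $d$-sequence, and by construction $\nabla(A_{i+1})=\nabla$, so $\nabla(A_{i+1})$ uses only $m$ and $1$ and contains exactly $a$ occurrences of $m$, as required.

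It remains to observe that $A_1,\dots,A_k$ are pairwise disjoint as $d$-sequences, meaning no symbol (vertex) is repeated between two of them. This is exactly the content of Theorem~\ref{thm:main2restate}: the cycles $\seq{0},\seq{(m-1)},\dots,\seq{(k-1)(m-1)}$ are pairwise vertex-disjoint in $\Shift(n,m)$, so their vertex sets $V(A_1),\dots,V(A_k)$ are pairwise disjoint, and hence the $d$-sequences $A_1,\dots,A_k$ share no entry. This gives the $k$ disjoint $d$-sequences claimed.

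There is essentially no obstacle here beyond unwinding the definitions: all the real work (the regularity of $\Delta$, the construction of the rotated cycles, and the disjointness argument) is already carried out in Theorem~\ref{thm:main2restate}, and the corollary is just its restatement in the language of sequences, dropping the graph-theoretic wrapping. The only point to be slightly careful about is the bookkeeping: checking that $\nabla(A_{i+1})$ really equals $\nabla(\Delta)$ and not a cyclic shift of it — but this is immediate from the definition of $\seq{v}=C_{v,\Delta}$, whose consecutive differences are $l_0,\dots,l_{d-1}$ by construction, so this too is routine.
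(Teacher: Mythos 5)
Your proposal is correct and matches the paper's (implicit) argument exactly: the paper offers no separate proof of Corollary~\ref{thm:sequence}, presenting it as an immediate consequence of Theorem~\ref{thm:main2restate}, and your unwinding --- taking the vertex sequences of the $k$ pairwise disjoint cycles $\seq{0},\seq{(m-1)},\dots,\seq{(k-1)(m-1)}$ and noting each has differential sequence equal to that of the regular configuration, hence $a$ copies of $m$ and $b$ copies of $1$ --- is precisely that consequence. Your side remark that $\nabla(A_{i+1})$ equals $\nabla(\Delta)$ on the nose (not merely up to cyclic shift) is a correct and harmless bit of bookkeeping.
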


Here $k$ is sharp. Notice that $[n]$ contains $k$ disjoint sets of
size $d$. In the remainder of this section, we are going to prove
Theorem~\ref{thm:main2restate}, and hence Theorem~\ref{thm:main2}.
Before that, we need some preparations. Firstly, we fix a
configuration $\Delta$ in $\config(a,b)$ with its characteristic
sequence $\{x_0,\cdots,x_{b-1}\}$.

\begin{definition}
Given a subset $B\subseteq V(C_n)$, its {\em differential set}
$D(B)$ is defined to be $\{b_i-b_j (\md n)~|~ \forall b_i,b_j\in
B\}$.
\end{definition}

\begin{proposition}\label{edges}
$\seq{i}\cap \seq{j} \not = \emptyset$ if and only if $j-i\in D(
V_\Delta )$. Here the subtracting is calculated with modulo $n$.
\end{proposition}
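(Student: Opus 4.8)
The plan is to unwind the definitions of $\seq{i}$ and $\seq{j}$ as vertex sets and translate the condition $\seq{i}\cap\seq{j}\neq\emptyset$ into a statement about the generic cycle's vertex set $V_\Delta$. Recall that $\seq{i}=C_{i,\Delta}$ has vertex set $\{i+k_0,i+k_1,\dots,i+k_{d-2}\}\cup\{i\}$ where the partial sums $k_p=l_0+\cdots+l_p$ are exactly the nonzero elements of $V_\Delta$ (together with $0=k_{-1}$, in the sense that $V_\Delta=\{0,k_0,k_1,\dots,k_{d-2}\}$, all taken modulo $n$). In other words, $\seq{i}$ is simply the translate $i+V_\Delta$ of the fixed set $V_\Delta$, where addition is modulo $n$.

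With this observation the proposition becomes a triviality about translates of a set in $\mathbb{Z}_n$. First I would state and justify the identity $V(\seq{i})=i+V_\Delta:=\{i+w\pmod n : w\in V_\Delta\}$ directly from the construction of $C_{v,\Delta}$ given just before the statement. Then $\seq{i}\cap\seq{j}\neq\emptyset$ means there exist $w,w'\in V_\Delta$ with $i+w\equiv j+w'\pmod n$, i.e.\ $j-i\equiv w-w'\pmod n$; since $w,w'$ range over all of $V_\Delta$, this says precisely $j-i\in D(V_\Delta)$ by the definition of the differential set. The two directions of the ``if and only if'' are then immediate: one direction reads off a common vertex from a witnessing pair $w,w'$, and the other produces such a pair from the assumed nonempty intersection.

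There is no real obstacle here; the only point requiring a little care is bookkeeping modulo $n$ — making sure that $V_\Delta$ as defined (the vertex set of the generic cycle $\seq{0}$) really is the set $\{0,k_0,\dots,k_{d-2}\}$ and that ``$i+V_\Delta$'' is interpreted with all arithmetic modulo $n$, consistently with the definition of $D(B)$. I would simply remark once at the start that all additions and subtractions are modulo $n$, exactly as in the surrounding text, and then the argument is a one-line computation. This proposition is really just setting up the combinatorial reformulation that the subsequent (harder) argument — counting which translates can be made pairwise disjoint, and relating that count to regularity of $\Delta$ — will exploit.
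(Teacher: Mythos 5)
Your argument is correct and is essentially identical to the paper's own proof: both observe that $\seq{i}$ is the translate $i+V_\Delta$ and reduce the intersection condition to the existence of $w,w'\in V_\Delta$ with $j-i\equiv w-w' \pmod n$, which is exactly membership in $D(V_\Delta)$. Your extra remarks on the modular bookkeeping are sound but add nothing beyond the paper's one-line computation.
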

\begin{proof} Recall that $V_\Delta=\{0,k_0,k_1,\cdots,k_{d-2}\}$ is the vertex set of
$C_{0,\Delta}$. Then $\seq{i}\cap \seq{j} \not = \emptyset$ if and
only if there exists a pair of indices $p,q$ such that
$i+k_p=j+k_q$, which is equivalent to $j-i\in  D( V_\Delta ) $.
\end{proof}

\begin{corollary}\label{cor:edges}
$\seq{i}\cap \seq{j} \not = \emptyset$ if and only if
$\seq{i+1}\cap \seq{j+1} \not = \emptyset$.
\end{corollary}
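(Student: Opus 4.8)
The plan is to reduce the statement directly to Proposition~\ref{edges}. That proposition characterizes the nonemptiness of $\seq{i}\cap\seq{j}$ purely in terms of the difference $j-i$: the intersection is nonempty if and only if $j-i\in D(V_\Delta)$, with subtraction taken modulo $n$. Applying the same proposition to the pair $(i+1,j+1)$, the intersection $\seq{i+1}\cap\seq{j+1}$ is nonempty if and only if $(j+1)-(i+1)\in D(V_\Delta)$.

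The key — and essentially only — observation is that $(j+1)-(i+1)\equiv j-i \pmod{n}$, so the two membership conditions are literally the same condition. Hence one intersection is nonempty exactly when the other is, which is the claimed equivalence. One should also recall that the indices are read modulo $n$ throughout, so $\seq{i+1}$ and $\seq{j+1}$ are understood as $\seq{(i+1)\bmod n}$ and $\seq{(j+1)\bmod n}$, consistently with the conventions fixed earlier in the section; this makes the shift $i\mapsto i+1$, $j\mapsto j+1$ well defined and difference-preserving.

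There is no genuine obstacle here: the corollary is an immediate consequence of the fact that Proposition~\ref{edges} expresses the intersection pattern of the rotated cycles $\seq{i}$ as a function of $j-i$ alone — in other words, the family $\{\seq{i}\}$ is invariant, as an ``intersection graph'', under the cyclic shift of indices. The only minor point to verify is that $D(V_\Delta)$ does not depend on $i$ or $j$, but it clearly does not, since $V_\Delta$ is determined once $\Delta$ is fixed. Thus the whole proof amounts to citing Proposition~\ref{edges} twice and noting the invariance of differences under translation.
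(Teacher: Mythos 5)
Your proof is correct and matches the paper's (implicit) argument: the corollary is stated immediately after Proposition~\ref{edges} with no separate proof precisely because, as you observe, the criterion depends only on $j-i \pmod n$, which is unchanged by the shift. Nothing further is needed.
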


\begin{proposition}\label{prop:diff set}
$D( V_\Delta )=\{l_i+l_{i+1}+\cdots+l_{i+s}|1\leqslant i \leqslant
d, 0\leqslant s<d \}\cup {0}$.
\end{proposition}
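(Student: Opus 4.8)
The statement asserts that the differential set $D(V_\Delta)$ of the generic cycle's vertex set equals the set of all sums of consecutive entries (cyclically) of the differential sequence $\nabla = \langle l_0, \dots, l_{d-1}\rangle$, together with $0$. Recall that $V_\Delta = \{0, k_0, k_1, \dots, k_{d-2}\}$ where $k_p = l_0 + \cdots + l_p$, and that these are taken modulo $n$; note $k_{d-1} = l_0 + \cdots + l_{d-1} = a m + b \cdot 1 = n \equiv 0$, so it is convenient to set $k_{-1} = 0$ and regard the vertex set as $\{k_{-1}, k_0, \dots, k_{d-2}\}$.

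The plan is a direct double-inclusion argument on the elements of the difference set. First I would observe that $D(V_\Delta)$ consists, by definition, of all differences $k_p - k_q \pmod n$ with $-1 \leqslant p, q \leqslant d-2$. When $p = q$ this gives $0$, which accounts for the $\cup\,\{0\}$ term. When $p \neq q$, write (say) $q < p$; then $k_p - k_q = l_{q+1} + l_{q+2} + \cdots + l_p$, a sum of consecutive entries of the sequence, which is exactly of the form $l_i + l_{i+1} + \cdots + l_{i+s}$ with $i = q+1 \in [0, d-1]$ and $s = p - q - 1 \in [0, d-1)$. This handles one inclusion. For the case $q > p$: since $k_{d-1} \equiv 0 \equiv k_{-1}$, we have $k_p - k_q \equiv k_p - k_q + (k_{d-1} - k_{-1}) = (k_{d-1} - k_q) + (k_p - k_{-1})$, which is a sum of consecutive entries wrapping around the end of the sequence — and here the cyclic indexing convention (indices mod $d$) makes this literally a sum $l_i + \cdots + l_{i+s}$. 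So every element of $D(V_\Delta)$ lies in the right-hand set. For the reverse inclusion, given any consecutive sum $l_i + l_{i+1} + \cdots + l_{i+s}$ with the stated index ranges, I would exhibit it as a difference $k_{i+s} - k_{i-1}$ of two elements of $V_\Delta$ (reading indices cyclically and using $k_{d-1} \equiv k_{-1} \equiv 0$ to reduce back into range when the indices wrap), so it belongs to $D(V_\Delta)$.

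The only mildly delicate point — and the one I would be most careful about — is the bookkeeping of the cyclic indices: the right-hand set ranges over $1 \leqslant i \leqslant d$ and $0 \leqslant s < d$ with indices read modulo $d$ (the paper's standing convention for sequences), whereas the vertex labels $k_p$ are genuine residues modulo $n$. One must check that the wrap-around identity $k_{d-1} \equiv k_{-1} \pmod n$ (equivalently, the total sum of the $l_i$ being exactly $n$) correctly translates a ``cyclic consecutive sum'' of the $l_i$'s into an honest difference of two vertex labels, and conversely. Once this correspondence is set up cleanly, both inclusions are immediate and the proposition follows; there is no real obstacle beyond this index management.
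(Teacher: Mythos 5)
Your proposal is correct and follows essentially the same route as the paper: both arguments write each vertex of $V_\Delta$ as a partial sum $k_p=l_0+\cdots+l_p$, observe that a difference $k_p-k_q$ with $p>q$ is a consecutive block $l_{q+1}+\cdots+l_p$, and use $l_0+\cdots+l_{d-1}=n$ to handle the wrap-around case. You are somewhat more explicit about the reverse inclusion and the index bookkeeping (which the paper leaves implicit), but the underlying argument is the same.
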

\begin{proof}
$\forall x,y\in V_\Delta$, if $x=y$, then $x-y=0$; otherwise we
have: $x=l_0+l_1+\cdots+l_p$ and $y=l_0+l_1+\cdots+l_q$ for some
$0<p,q<d$ where $p\not = q$. If $p>q$, the
$x-y=l_{p+1}+\cdots+l_{q}$. Otherwise from
$(l_0+l_1+\cdots+l_{d-1})=n$ we have
\begin{eqnarray*}
x-y&\equiv&x+n-y\\
&=&(l_0+l_1+\cdots+l_p)+(l_0+l_1+\cdots+l_{d-1})-(l_0+l_1+\cdots+l_{q})\\
&=&(l_0+l_1+\cdots+l_p)+(l_{q+1}+\cdots+l_{d-1})\\
&=&l_{q+1}+\cdots+l_{d-1}+l_0+l_1+\cdots+l_p .
\end{eqnarray*}
\end{proof}


For the configuration $\Delta$, recall that  $\mu_j$ is defined in
Section~\ref{sec:regular} as
$$\mu_j=\min_{0\leqslant i \leqslant
{b-1}}\{x_i+x_{i+1}+\cdots+x_{i+j-1}\},$$ for $1\leqslant j
\leqslant b$ and $\mu_{-1}=\mu_0=0$. Similarly we define $\xi_j$
as
$$\xi_j=\max_{0\leqslant i \leqslant
{b-1}}\{x_i+x_{i+1}+\cdots+x_{i+j}\}$$ for $0\leqslant j \leqslant
b-1$ and $\nu_b=a-1$. Here we let $\nu_{b-1}=a$ and $\nu_b=a-1$ to
satisfy the boundary condition in the following proposition.

\begin{proposition}\label{mainproof:1}
$D( V_\Delta )=\{p_{j}m+j~|~0\leqslant j \leqslant b,
~\mu_{j-1}\leqslant p_j \leqslant \xi_j\}$
\end{proposition}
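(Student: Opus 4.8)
The plan is to combine Proposition~\ref{prop:diff set} with a careful bookkeeping of how the differential sequence $\nabla=\langle l_0,\dots,l_{d-1}\rangle$ of $\Delta$ is built from the characteristic sequence $\{x_0,\dots,x_{b-1}\}$. Recall that $\nabla$ consists of $a$ copies of $m$ (the red beads) and $b$ copies of $1$ (the black beads), arranged so that between the $j$-th and $(j{+}1)$-th occurrence of $1$ there are exactly $x_j$ copies of $m$. Hence any cyclic interval sum $l_i+l_{i+1}+\cdots+l_{i+s}$ appearing in Proposition~\ref{prop:diff set} has the form $pm+q$, where $q$ is the number of $1$s in that interval and $p$ is the number of $m$s. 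So every nonzero element of $D(V_\Delta)$ is of the shape $pm+j$ with $0\le j\le b$ (we allow $j=b$ since a full wrap-around interval may contain all $b$ black beads), and I must pin down, for each fixed $j$, exactly which multipliers $p$ are attainable.

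First I would fix $j$ with $1\le j\le b-1$ and look at all cyclic intervals of $\nabla$ containing precisely $j$ occurrences of $1$. Such an interval is, up to the $\epsilon$-padding at the two ends, a block $B_i,R^{x_i},B_{i+1},\dots,B_{i+j-1},R^{x_{i+j-1}}$ wait — more precisely, if the interval starts just after some black bead $B_{i-1}$ and ends just before $B_{i+j}$, then it contains the red blocks $x_i,\dots,x_{i+j-1}$ in full plus a prefix of the block $x_{i-1}$ and a suffix—no: I should be careful. An interval containing exactly $j$ ones, say $B_i,\dots,B_{i+j-1}$ together with some of the reds flanking them, contains between $x_i+\cdots+x_{i+j-2}$ reds (taking no flanking reds, i.e.\ the interval runs from $B_i$ to $B_{i+j-1}$) and $x_{i-1}+x_i+\cdots+x_{i+j-1}$ reds... hmm, let me recount: the $j$ ones are $B_i,\dots,B_{i+j-1}$; the reds strictly between them number $x_i+\cdots+x_{i+j-2}$; we may additionally prepend up to $x_{i-1}$ reds and append up to $x_{i+j-1}$ reds. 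So the number of reds ranges over the full integer interval $[\,x_i+\cdots+x_{i+j-2}\,,\ x_{i-1}+x_i+\cdots+x_{i+j-1}\,]$. Taking the union over all starting positions $i$, the lower endpoints sweep out $\mu_{j-1}$ as their minimum and the upper endpoints sweep out $\xi_{j-1}$... and here I need to reconcile indices with the statement's $\xi_j$; the definition given is $\xi_j=\max_i\{x_i+\cdots+x_{i+j}\}$, which is a sum of $j{+}1$ consecutive $x$'s, exactly matching $x_{i-1}+\cdots+x_{i+j-1}$ after reindexing. So the set of attainable red-counts for a fixed $j$ is exactly $[\mu_{j-1},\xi_j]$, giving the claimed $\{p_jm+j: \mu_{j-1}\le p_j\le\xi_j\}$. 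The contiguity (that every integer in the range occurs, not just the extremes) is the key point and follows because, for a fixed $i$, padding one red bead at a time changes the count by exactly $1$, and consecutive starting blocks overlap enough to chain the ranges together.

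Next I would handle the boundary cases $j=0$ and $j=b$ separately, since these are where the conventions $\mu_{-1}=\mu_0=0$, $\nu_{b-1}=a$, $\nu_b=a-1$ (the $\xi$/$\nu$ notation is slightly inconsistent in the excerpt, so I would state clearly which symbol is meant) are designed to make the uniform formula hold. For $j=0$: an interval with no $1$s is a run of consecutive $m$s inside a single block $x_i$, so the red-count ranges over $[0,\max_i x_i - ?]$, together with the empty interval giving $0$; one checks this equals $[\mu_{-1},\xi_0]=[0,\max_i x_i]$ after accounting for the $0\in D(V_\Delta)$ term already recorded. For $j=b$: an interval containing all $b$ ones wraps around and contains all reds except possibly a suffix of one block and a prefix of the adjacent one, so the red-count ranges over $[a - \max_i x_i,\ a-1]$ hmm — more precisely $[\mu_{b-1},a-1]$, matching $\mu_{j-1}\le p_j\le\nu_b=a-1$. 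I would verify each of these against the stated conventions and note that $0\in D(V_\Delta)$ is the $p_0=0,j=0$ term.

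The main obstacle I anticipate is purely organizational rather than deep: getting the index shifts between $\mu$, $\xi$ (sums of $j{+}1$ terms) and the "$j$ ones in the interval" counting exactly right, and making airtight the \emph{contiguity} claim — that for each $j$ every integer $p$ with $\mu_{j-1}\le p\le\xi_j$ is realized, which requires showing the per-starting-position intervals $[x_i+\cdots+x_{i+j-2},\,x_{i-1}+\cdots+x_{i+j-1}]$ collectively cover $[\mu_{j-1},\xi_j]$ with no gaps. This last point I would prove by observing that as $i$ increases by $1$ the lower endpoint changes by $x_{i+j-1}-x_{i-1}$ and each individual interval already has length $x_{i-1}+x_{i+j-1}\ge$ the jump, so overlapping intervals chain up; alternatively, and more cleanly, I would argue directly that for any target $p$ in range one can explicitly choose $i$, $\epsilon_1\le x_{i-1}$, $\epsilon_2\le x_{i+j-1}$ realizing it. I expect this to be a short but fiddly argument, and the rest to be routine translation between the necklace picture and the differential-sequence picture.
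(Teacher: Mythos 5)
Your proposal is correct and takes essentially the same route as the paper, which simply invokes Proposition~\ref{prop:diff set} and classifies the cyclic interval sums by the number of $1$s they contain, checking the boundary cases $j=0$ and $j=b$ directly. You additionally make explicit the contiguity argument (that every integer $p\in[\mu_{j-1},\xi_j]$ is attained, because the per-starting-position ranges are full integer intervals that overlap consecutively), a point the paper leaves implicit; your sketch of it is sound.
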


\begin{proof}
The boundary cases can be verified directly and the other cases
are following from proposition (\ref{prop:diff set}) by
considering the number of 1s in the expressions of the elements in
$D( V_\Delta )$.
\end{proof}

 Now we are going to consider the case when $\Delta$ is the regular configuration
 in $\Shift(n,m)$. The following theorem is a
main step in the proof of Theorem~\ref{thm:main2}.

\begin{theorem}\label{maintheorem}
For the regular configuration $\Delta$ in $\config(a,b)$,
$\seq{0}~\cap~\seq{q(m-1)}=\emptyset$ for $1\leqslant q \leqslant
k-1$.
\end{theorem}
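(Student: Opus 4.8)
The plan is to use Proposition~\ref{mainproof:1}, which gives an explicit description of the differential set $D(V_\Delta)$, together with Proposition~\ref{edges}, which says $\seq{0}\cap\seq{q(m-1)}\neq\emptyset$ if and only if $q(m-1)\md n\in D(V_\Delta)$. So the task reduces to showing that none of the residues $q(m-1)\md n$, for $1\leqslant q\leqslant k-1$, lies in $D(V_\Delta)=\{p_jm+j\mid 0\leqslant j\leqslant b,\ \mu_{j-1}\leqslant p_j\leqslant \xi_j\}$. First I would rewrite $q(m-1)=qm-q$; since $n=am+b$, reducing modulo $n$ this becomes $(q-a)m+(b-q)$ when we want the ``$j$-coordinate'' to be $b-q$ (valid when $0\leqslant b-q$), or $qm-q$ directly (when the first coordinate $q$ is in the admissible range). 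The point is that an element of $D(V_\Delta)$ written as $pm+j$ has its ``$j$-part'' confined to $0\leqslant j\leqslant b$, so I should match $q(m-1)\md n$ against that form and check that the corresponding first coordinate $p$ falls \emph{outside} the interval $[\mu_{j-1},\xi_j]$.

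The key computation is the following. Suppose for contradiction that $q(m-1)\equiv p_jm+j\pmod n$ for some admissible $j$ and some $p_j$ with $\mu_{j-1}\leqslant p_j\leqslant\xi_j$. Depending on the size of $q$ relative to $b$, the residue $q(m-1)\md n$ is represented either as $qm-q$ (so $j=m-q$... no — rather, writing it in the shape $pm+j$ with $0\le j\le b$ forces $j\equiv -q\pmod{?}$) — here I would be careful: since $\gcd(m, \cdot)$ issues can arise, the cleaner route is to use $n=am+b$ to get $q(m-1)=qm-q \equiv (q-a)m + (b-q) \pmod n$, so that when $1\leqslant q\leqslant b$ we have $j=b-q\in[0,b-1]$ and first coordinate $q-a$ (plus possibly a multiple of $m$ absorbed), and when $q>b$ one reduces further. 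Then $p_j=q-a$ (or the appropriate representative) must satisfy $\mu_{b-q-1}\leqslant q-a\leqslant\xi_{b-q}$. Now I invoke the regularity of $\Delta$ via Lemma~\ref{lem:regsecond}: $1+\mu_i>\frac ab i$ and, dually (using Equation~\ref{eq:total}), $\xi_i<\frac ab(i+1)+1$. Substituting $i=b-q-1$ and $i=b-q$ and using $k=\floor{n/(a+b)}$ together with $q\leqslant k-1$ should make both inequalities $\mu_{b-q-1}\leqslant q-a$ and $q-a\leqslant\xi_{b-q}$ fail simultaneously — that is, the regular bounds squeeze the admissible first coordinate into an interval that $q-a$ just misses, for every $q$ in the stated range. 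I expect the bound $q\leqslant k-1$ to be exactly what is needed to keep $q-a$ below $\mu_{b-q-1}$.

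The main obstacle, I expect, is bookkeeping: correctly choosing the representative of $q(m-1)$ modulo $n$ in the normal form $pm+j$ with $0\leqslant j\leqslant b$ (there may be two or three cases according to whether $q\leqslant b$, $b<q$, and whether a borrow from the $m$-digit occurs when $q>0$), and then checking the relevant cases at the boundary $j=0$ and $j=b$ where the definitions set $\mu_{-1}=\mu_0=0$ and $\nu_{b-1}=a$, $\nu_b=a-1$ (the $\xi$/$\nu$ boundary conventions were introduced precisely to make Proposition~\ref{mainproof:1} uniform, so they must be handled with care here too). Once the normal form is pinned down, the inequality manipulations are routine applications of Lemma~\ref{lem:regsecond} and Equation~\ref{eq:total}. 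I would organize the write-up as: (1) reduce to a membership question via Propositions~\ref{edges} and~\ref{mainproof:1}; (2) compute the normal form of $q(m-1)\md n$; (3) derive the forced inequalities on the first coordinate; (4) contradict them using regularity and $q\leqslant k-1$.
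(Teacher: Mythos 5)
Your step (1) --- reducing to the membership question $q(m-1)\bmod n\in D(V_\Delta)$ via Propositions~\ref{edges} and~\ref{mainproof:1} and then contradicting regularity through Lemma~\ref{lem:regsecond} --- is exactly the paper's frame. But the middle of your argument, where the real work happens, has a genuine gap. First, the ``normal form'' computation is wrong: since $n=am+b$, one has $q(m-1)=qm-q\equiv (q+a)m+(b-q)\pmod n$, not $(q-a)m+(b-q)$ (your expression differs from $q(m-1)$ by $2b$ modulo $n$), and even the corrected first coordinate $q+a$ exceeds the admissible range $p_j\leqslant\xi_j\leqslant a$, so further reduction is needed. More fundamentally, the representation $p m+j\pmod n$ with $0\leqslant j\leqslant b$ is \emph{not} unique --- the pairs $(p,j)$ and $(p',j')$ give the same residue whenever $(p-p')m+(j-j')\equiv 0\pmod n$, and such collisions with both $j,j'\in[0,b]$ do occur --- so ``the'' normal form does not determine $j$, and you cannot conclude $j=b-q$. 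Your plan of matching a single canonical representative against $D(V_\Delta)$ therefore does not reduce the problem to one inequality check per $q$.

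The missing idea is the quantitative bound that the paper isolates as its Claim (Proposition~\ref{prop:extramain}): writing the congruence as the exact integer equation $q(m-1)=p_jm+j+r(am+b)$ with $r=\floor{q(m-1)/n}$, one gets $m(q-p_j-ra)=q+rb+j$, and the bound $q+rb+j<2m$ (which uses $q\leqslant k-1$, $j\leqslant b$ and $b<m$) forces the unique factorization $q+rb+j=m$ and $q-p_j-ra=1$. Only after this step is the relation between $q$, $j$, $r$ and $p_j$ pinned down tightly enough that eliminating $q$ and invoking $\mu_{j-1}\leqslant p_j$ yields $1+\mu_{j-1}\leqslant\frac{a}{b}(j-1)$, contradicting Lemma~\ref{lem:regsecond}. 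Nothing in your proposal plays the role of this size argument; the concluding inequalities are only asserted with ``should'' and ``I expect,'' and as set up they would not go through. To repair the proof you would need to reinstate the exact equation with the quotient $r$ and prove the $2m$ bound, at which point you are back to the paper's argument.
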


\begin{proof}
We will prove the theorem by contradiction. By assumption,
$\seq{i}$ is the cycle $C_{i,\Delta}$ for the regular
configuration $\Delta$ in $\config(n,m)$. If the theorem fails,
then there exists one integer $q\in [1,k-1]$ such that
$\seq{0}~\cap~ \seq{q(m-1)}\not =\emptyset$.\\

From Proposition~\ref{edges}, $q(m-1)\in D(V_\Delta)$. Together
with Proposition~\ref{mainproof:1}, this implies the following
equation has a solution for the variables $j$ and $q$ such that
$0\leqslant j \leqslant b$ and $1\leqslant q \leqslant k-1$.
\begin{equation}\label{eq: mainstep:1}
q(m-1)\equiv p_{j}m+j \ (\md n).
\end{equation}

Let $$r=\floor{\frac{q(m-1)}{am+b}}.$$ Since $p_{j}m+j<n$ from the
definition, Equation~(\ref{eq: mainstep:1}) can be simplified as:
\begin{equation}
q(m-1)  = p_{j}m+j+r(am+b).
\end{equation}

The above equation can be further simplified as:
\begin{equation}\label{mainequation}
m=\frac{q+rb+j}{q-p_j-ra}
\end{equation}

Now we are going to deduce a contradiction from the assumption
that the above equation has an integer solution for the variables
$j$ and $q$. To this end, we use the following claim, which will
be proven
later as Proposition~\ref{prop:extramain}.\\

{\bf Claim:} $q+rb+j<2m$ for $0\leqslant j \leqslant b, \
1\leqslant q \leqslant k-1$.\\

By this claim,  Equation~(\ref{mainequation}) has integer
solutions if and only if the following two equations have integer
solutions:
\begin{eqnarray}
q+rb+j=m  \label{eq: m} \\
q-p_j-ra=1  \label{eq: s}
\end{eqnarray}

By eliminating $q$ from the above equations we have:
\begin{equation}\label{eq:ms}
m=1+rb+j+p_j+ra=1+r(a+b)+j+p_j.
\end{equation}

On the other hand, we can solve $q$ from Equation~(\ref{eq: m}) to
get: $q=m-rb-j $. Together with $q\leq k-1$, we have:
$$k\geq 1+m-rb-j.$$

Since $$k=\floor{\frac{am+b}{a+b}},$$ we know,
\begin{eqnarray*}
& k(a+b) &\leqslant am+b \\
 \Rightarrow& (1+m-rb-j)(a+b) &\leqslant am+b \\
  \Rightarrow& a+bm &\leqslant rb(a+b)+j(a+b) \\
      \Rightarrow&       m    &\leqslant r(a+b)+(1+\frac{a}{b})j-\frac{a}{b}.
\end{eqnarray*}

Together with Equation(\ref{eq:ms}):
\begin{eqnarray*}
& 1+r(a+b)+j+p_j &\leqslant
         r(a+b)+(1+\frac{a}{b})j-\frac{a}{b} \\
         \Rightarrow& 1+p_j &\leqslant \frac{a}{b}(j-1) \\
         \Rightarrow& 1+\mu_{j-1} &\leqslant \frac{a}{b}(j-1)
\end{eqnarray*}
where the last step comes from the fact that $\mu_{j-1} \leqslant
p_j$. Therefore if the theorem fails, then there must exist an $j
\in [0,b]$ such that $1+\mu_{j-1} \leqslant \frac{a}{b}(j-1)$. But
from Lemma~\ref{lem:regsecond}, $1+\mu_{j-1}> \frac{a}{b}(j-1)$
for all $0\leqslant j \leqslant b$ since $\Delta$ is a regular
configuration. Thus we get an contradiction, which completes the
proof of this theorem under the assumption of the claim.
\end{proof}

Now we are going to prove the claim to complete the proof of
Theorem~\ref{maintheorem}.

\begin{proposition}\label{prop:extramain}
$q+rb+j<2m$ for $0\leqslant j \leqslant b, \ 1\leqslant q
\leqslant k-1$.
\end{proposition}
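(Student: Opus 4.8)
The plan is to bound each of the three summands $q$, $rb$, and $j$ separately in terms of $m$, using the definitions $n = am+b$ with $0 \le b < m$, $d = a+b$, $k = \floor{n/(a+b)}$, and $r = \floor{q(m-1)/(am+b)}$. First I would dispose of the trivial term: since $0 \le j \le b < m$, we already have $j < m$, so it suffices to show $q + rb < m$ (then adding $j < m$ gives the desired strict inequality $q+rb+j < 2m$).

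Next I would control $r$. Because $1 \le q \le k-1 < k = \floor{n/(a+b)} \le n/(a+b)$, we get $q(a+b) < n = am+b$, hence $q < (am+b)/(a+b) \le m$ (using $b < m$, or more crudely $b \le b\cdot\frac{a+1}{a+b}\le\dots$; in any case $q \le m-1$ can be read off). For $r$, note $r = \floor{q(m-1)/(am+b)} \le q(m-1)/(am+b) < qm/(am+b) \le qm/(am) = q/a$ when $a \ge 1$ (recall we assumed $a>0,b>0$), so $ra < q \le k-1$, i.e. $r \le \lceil q/a\rceil - 1$ roughly, and in particular $r$ is small — typically $r < q/a$. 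The key combination to estimate is then $q + rb$: using $r < q/a$ and $b < m$ one gets $q + rb < q + (q/a)b = q(1 + b/a) = q(a+b)/a$. Since $q \le k-1$ and $k(a+b) \le am+b$, we have $q(a+b) \le (k-1)(a+b) = k(a+b) - (a+b) \le am+b-(a+b) = am - a + b - b\cdot 0$; dividing by $a$ gives $q+rb < m - 1 + (b-b)/a \le m$. So $q + rb < m$, and adding $j < m$ finishes it.

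The step I expect to be the main obstacle is making the estimate $r < q/a$ fully rigorous at the boundary, and handling the floor functions without losing a crucial unit. Specifically, $r = \floor{q(m-1)/(am+b)}$ and one must be careful that $q(m-1)/(am+b)$ could be just below $q/a$ but the floor could still be as large as needed; I would argue $r(am+b) \le q(m-1) < qm$, so $ram + rb \le qm - q$, giving $ram \le qm - q - rb \le qm$, hence $ra \le q$, and then since the inequality $ra \le q$ combined with $q \le k-1$ and the defining inequality $k(a+b)\le am+b$ must be pushed to a \emph{strict} bound $q+rb+j<2m$, I would track the slack coming from $b < m$ (strict) and from $q \le k-1$ (strict, i.e. $q < k$). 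Combining $ra \le q$, $b < m$, $j < m$, and $q(a+b) \le (k-1)(a+b) \le am+b-(a+b)$, the arithmetic should close with a strict inequality; the delicate bookkeeping is ensuring we never need the excluded case $a=0$ or $b=0$, which is legitimate since the theorem's proof already reduced to $a>0$, $b>0$.

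Finally, I would double-check the edge configurations $j=0$, $j=b$, $q=1$, and $q=k-1$ by direct substitution, since these are exactly where a strict inequality is most likely to become an equality; if any of them gives equality rather than strict inequality I would revisit whether the claim should be stated as $\le 2m$ or whether an additional $+1$ of slack is available from $r \ge 0$ and $q \ge 1$ forcing $am+b > a+b$, i.e. $m > 1$ (the case $m=1$ making $\Shift(n,1)$ degenerate and handled trivially).
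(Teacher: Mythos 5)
Your proof is correct, and it is organized differently from the paper's. You argue directly, splitting the bound as $j\leq b<m$ plus $q+rb<m$, where the second estimate comes from the key simplification $r\leq q(m-1)/(am+b)<q/a$ (valid since $a+b>0$), whence $q+rb<q(a+b)/a\leq (k-1)(a+b)/a\leq\bigl(am+b-(a+b)\bigr)/a=m-1$. The paper instead argues by contradiction: it assumes $q+rb+j\geq 2m$, substitutes the upper bounds $j\leq b$, $q\leq k-1$, $r\leq q(m-1)/(am+b)$ and $k\leq (am+b)/(a+b)$ wholesale, and grinds the resulting rational inequality down to $b^2\geq(a+2b)m$, which is absurd for $0\leq b<m$. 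Both proofs rest on exactly the same raw estimates; your route buys a cleaner computation (the single division by $a$ replaces the paper's polynomial expansion), at the cost of needing $a\geq 1$ — which is harmless, since the paper has already reduced to $a>0$, $b>0$ before invoking this proposition, and when $a=0$ one has $k=1$ so the range $1\leq q\leq k-1$ is empty and the claim is vacuous. Two small points of bookkeeping: your line "$am+b-(a+b)=am-a+b-b\cdot 0$" should read $am-a$ (the subsequent "$(b-b)/a$" shows you meant this), and your worry about losing a unit at the boundary is unfounded — the chain gives $q+rb<m-1$ with strict inequality already at the step $r<q/a$, so there is slack to spare and no edge case needs separate checking.
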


\begin{proof}
We will prove it by contradiction. If not, we have $ q+rb+j
\geqslant 2m$. Together with the assumption $j\leq b$, it implies
$q+(r+1)b \geqslant 2m$. Since $q\leqslant k-1$ and
$$r=\floor{\frac{q(m-1)}{am+b}},$$ we have:
\begin{equation}
  (k-1)+(1+\frac{(k-1)(m-1)}{am+b} \geqslant 2m
\end{equation}
By using the fact that $$k=\floor{\frac{am+b}{a+b}},$$ we get:
\begin{equation}
(\frac{am+b}{a+b}-1)+(1+\frac{(\frac{am+b}{a+b}-1)(m-1)}{am+b})b
    \geqslant 2m.
\end{equation}
It can be further simplified as:
\begin{equation}
a(a+b)bm+(ab+b^2)b  \geqslant a(a+b)m^2+(a+b)(a+2b)m.
\end{equation}
By dividing $a+b$ from both sides, we obtain:
\begin{eqnarray*}
       & abm+b^2\geqslant am^2+am+2bm \\
        \Rightarrow& b^2\geqslant (am+a+2b-ab)m\\
         \Rightarrow& b^2\geqslant (a+2b)m.
\end{eqnarray*}
This is a contradiction since $a\geqslant 0$, $0 \leqslant b<m$
and $a+b\not = 0$.
\end{proof}

The last step in this section is to prove
Theorem~\ref{thm:main2restate} with Theorem~\ref{maintheorem},
which is relatively easy.\\

{\em The proof of Theorem~\ref{thm:main2restate}} If the theorem
fails, then two cycles in $\mathcal{C}$ are not disjoint, which
means $\seq{0}~\cap~\seq{q(m-1)}\not= \emptyset$ for some $1\leq q
\leq k-1$, a contradiction to Theorem~\ref{maintheorem} since
$\Delta$ is a regular configuration.

\section{Conclusions}
In this paper, we give a relatively new definition of regular
configurations, which unifies the ``regularity" defined in many
models. Some properties of regular configurations are discussed,
including their constructions and the symmetry groups.

Regular configurations, or balanced words as it called in symbolic
dynamics (see Section~\ref{sec:words}), are showed to optimize a
number of quantities
 in words and ergodic theory~\cite{Jenkinson}.
 In this paper we extend this to graph theory. They are used to obtain a bound
 of the cycles packing number for shift graphs. In a forthcoming
 paper~\cite{Wu}, a polynomial algorithm is proposed to calculate
 $\nu_0(\Shift(n,m))$ while to calculate $\nu_0(G)$ is {\bf
 NP}-hard for general graph $G$.

A model not covered in this paper is the Kawasaki-Ising model,
which is studied in~\cite{CamWu}. There regular configurations are
characterized as the ground states in this model.

\section{Acknowledgements}

I would like to express my special thanks to Peter J Cameron for
his encouragement and help during this research, and to S\o ren
Riss for turning my attention to study the guessing number of
shift graphs~\cite{CamRiisWu}, which motivates this research. I am
also grateful to Oliver Jenkinson for his helpful conversation on
symbolic dynamics and kindly giving me the
preprint~\cite{Jenkinson}, Penwei Hao for the discussion on
computer graphics and Lihu Xu for turning my attention to the
Kawasaki-Ising model.

\end{document}